\newtheorem{remark}{Remark}
\def\addtab#1={#1\;&=}
\def\meeq#1{\def\ccr{\\\addtab}
 \begin{align*}
 \addtab#1
 \end{align*}
  }  
  \def\leqaddtab#1\leq{#1\;&\leq}
\def\vc#1{\mbox{\boldmath$#1$\unboldmath}}
\def\pr(#1){\left({#1}\right)}
\def\br[#1]{\left[{#1}\right]}
\def\fbr[#1]{\!\left[{#1}\right]}
\def\set#1{\left\{{#1}\right\}}
\def\ip<#1>{\left\langle{#1}\right\rangle}
\def\iip<#1>{\left\langle\!\langle{#1}\right\rangle\!\rangle}
\def\fpr(#1){\!\pr({#1})}
\def\mapengine#1,#2.{\mapfunction{#1}\ifx\void#2\else\mapengine #2.\fi }
\def\map[#1]{\mapengine #1,\void.}
\def\mapenginesep_#1#2,#3.{\mapfunction{#2}\ifx\void#3\else#1\mapengine #3.\fi }
\def\mapsep_#1[#2]{\mapenginesep_{#1}#2,\void.}
\def\vcbr[#1]{\pr(#1)}
\def\bvect[#1,#2]{
{
\def\dots{\cdots}
\def\mapfunction##1{\ | \  ##1}
	\sopmatrix{
		 \,#1\map[#2]\,
	}
}
}
\def\vect[#1]{
{\def\dots{\ldots}
	\vcbr[{#1}]
}}
\def\vectt[#1]{
{\def\dots{\ldots}
	\vect[{#1}]^{\top}
}}
\def\Vectt[#1]{
{
\def\mapfunction##1{##1 \cr} 
\def\dots{\vdots}
	\begin{pmatrix}
		\map[#1]
	\end{pmatrix}
}}
\def\R{{\mathbb R}}
\def\E{{\rm e}}
\def\I{{\rm i}}
\def\D{{\rm d}}
\def\dx{\D x}
\def\dy{\D y}
\def\tF_#1{{\tt F}_{#1}}
\def\tFC_#1{{\tt T}_{#1}}
\def\secref#1{Section~\ref{Section:#1}}
\def\erf{{\rm erf}\,}
\def\qand{\quad\hbox{and}\quad}
\def\qqand{\qquad\hbox{and}\qquad}
\def\qfor{\quad\hbox{for}\quad}
\def\elllRpz_#1{\ell_{#1{\rm z}}^{(\lambda,R),p}}
\def\sopmatrix#1{\begin{pmatrix}#1\end{pmatrix}}
\def\PP{{\mathbb P}}
\def\VP{\Vectt[p_0(x),p_1(x),\dots]}
\def\VPP{{\mathbf P}(x,y)}
\def\VPPt{\VPP^\top}
\def\P{{P}_{n,k}^{(a,b,c)}(x,y)}
\def\Pabc{{P}_{n,k}^{(a,b,c)}(x,y) }
\def\tPabc{{\tilde P}_{n,k}^{(a,b,c)}(x,y) }
\def\ddx{{\partial \over \partial x}}
\def\ddy{{\partial \over \partial y}}
\def\ddxy{{\partial^2 \over \partial x\partial y}}
\def\ddyy{{\partial^2 \over \partial y^2}}
\def\ddxx{{\partial^2 \over \partial x^2}}
\def\PPP^(#1){{\bf P}^{(#1)}}
\def\PPPt^(#1)(#2){\PPP^(#1)(#2)^\top}
\def\FF{{\vc f}}
\title{A sparse spectral method on triangles}
\author{Sheehan Olver\thanks{Department of Mathematics, Imperial College, London, UK. (\texttt{s.olver@imperial.ac.uk})} \and Alex Townsend\thanks{Department of Mathematics, Cornell University, Ithaca, NY 14853. (\texttt{townsend@cornell.edu}) This work is supported by National Science Foundation grant No.~1645445.} \and  Geoffrey Vasil\thanks{School of Mathematics \& Statistics, The University of Sydney, Australia. (\texttt{geoffrey.vasil@sydney.edu.au})} }
\date{\today}
\begin{document}
\maketitle

\begin{abstract}
In this paper, we demonstrate that many of the computational tools for univariate orthogonal polynomials have analogues for a family of bivariate orthogonal polynomials on the triangle, including Clenshaw's algorithm and sparse differentiation operators.  This allows us to derive a practical spectral method for solving linear partial differential equations on triangles with sparse discretizations. We  can thereby rapidly solve partial differential equations using polynomials with degrees in the thousands, resulting in sparse discretizations with as many as several million degrees of freedom. 
\end{abstract} 

\begin{keywords}
bivariate orthogonal polynomials, spectral methods, partial differential equations,  triangle, Clenshaw's algorithm
\end{keywords}

\begin{AMS}
33D50, 65N35
\end{AMS}

\def\wabc{w^{(a,b,c)}}
\def\mP{\smash{{P}_{n,k}(x,y)}}
\def\P{\smash{{P}_{n,k}^{(a,b,c)}(x,y)}}
\def\Pabc{\smash{{P}_{n,k}^{(a,b,c)}(x,y)}}
\def\tPabc{{\tilde P}_{n,k}^{(a,b,c)}(x,y) }
\def\PPabc{\smash{{P}_{n,k}^{(a,b,c)}}}
\def\tPPabc{{\tilde P}_{n,k}^{(a,b,c)}}
\def\ddx{\tfrac{\partial}{\partial x}}
\def\ddy{\tfrac{\partial}{\partial y}}
\def\ddz{\tfrac{\partial}{\partial z}}
\def\dudx{\tfrac{\partial u}{\partial x}}
\def\dudy{\tfrac{\partial u}{\partial y}}
\def\dudz{\tfrac{\partial u}{\partial z}}
\def\ddxy{\tfrac{\partial^2}{\partial x\partial y}}
\def\ddyy{\tfrac{\partial^2}{\partial y^2}}
\def\ddxx{\tfrac{\partial^2}{\partial x^2}}
\def\triangle{\lhd}

\section{Introduction}

Univariate orthogonal polynomials are fundamental in applied and computational mathematics.  They are used for the development of quadrature rules~\cite{Gautschi}, spectral theory of Jacobi operators~\cite{Teschl}, eigenvalue statistics of random matrices~\cite{Deift}, computational approximation theory~\cite{Trefethen_13}, and to derive spectral methods for the numerical solution of differential equations~\cite{Boyd,Clenshaw_57_01,Olver_13_01,Shen,Trefethen_00,Vasil_16_01}. On the contrary, multivariate orthogonal polynomials currently have a more limited impact in applications and computational methods, though it is an active research area with a promising future.

To demonstrate the potential practical importance of multivariate orthogonal polynomials, we show that many computational tools for univariate orthogonal polynomials can be generalized to a family of bivariate orthogonal polynomials on a triangle. These tools allow us to derive a sparse spectral method for solving general linear partial differential equations (PDEs) with Dirichlet and Neumann conditions on triangles. While the techniques are general, we demonstrate the method on the following PDEs:
\begin{align*}
\Delta^2 u &= f(x,y), &\hbox{(Biharmonic)} \\
u_y & = u_x, &\hbox{(Transport)} \\
\Delta u + V(x,y) u &= f(x,y). & \hbox{(Variable coefficient Helmholtz)}
\end{align*}


Since triangles can be mapped to each other by affine translations, and polynomials remain polynomial,  we can consider a single reference triangle, without loss of generality. Throughout this paper, we select the reference triangle to be the {\it unit simplex}: a right-angled triangle of unit height and width, i.e., $T = \{(x,y) : 0 < x < 1, 0 < y < 1-x\}$.  

There are several different families of bivariate orthogonal polynomials on $T$~\cite{Dunkl_14_01}. Here, we consider a family that is built from univariate orthogonal polynomials~\cite{Koornwinder_75_01}:
\begin{equation} 
\begin{aligned}
P_{n,k}(x,y)  &= \tilde P_{n-k}^{(2k+1,0)}\!(x)(1-x)^k \tilde P_k^{(0,0)}\!\left(\tfrac{y}{1-x}\right),\qquad n\geq k\geq 0,
\end{aligned}
\label{eq:Koornwinder}
\end{equation}
where  $\tilde P_k^{(a,b)}(x)$ denotes the degree $k$ shifted Jacobi polynomial on $[0,1]$ with parameters $(a,b)$.\footnote{In particular, $\tilde{P}_k^{(a,b)}(x)= P_k^{(a,b)}(2x-1)$, where $P_k^{(a,b)}$ is the degree $k$ Jacobi polynomial on $[-1,1]$ with parameters $(a,b)$.}  The polynomials in~\cref{eq:Koornwinder} are one possible generalization on triangles of the Legendre polynomials~\cite[Tab.~18.3.1]{DLMF}. In particular, the polynomials satisfy three-term recurrence relations (see~\cref{eq:bivariateRecurrence}) and are orthogonal with respect to the standard $L^2$ inner-product on $T$: 
\[
\iint_T P_{n,k}(x,y) P_{m,\ell}(x,y) \dx \dy = \begin{cases} \frac{1}{\pi_{n,k}}, & (n,k)=(m,\ell), \\ 0, & (n,k)\neq(m,\ell),\end{cases}
\]
where $\pi_{n,k} = 2(2k+1)(n+1)$.
They provide a well-conditioned basis to represent integrable functions $f\in L^2(T)$ as a series expansion, 
\[
f(x,y) \!= \!\!\sum_{n=0}^{\infty} \sum_{k=0}^n f_{n,k} P_{n,k}(x,y), \quad f_{n,k}\! =\! \pi_{n,k}\!\! \iint_{T} \!f(x,y) P_{n,k}(x,y) \dx \dy, 
\]
where the first equality above should be understood in the $L^2$-sense. In order to do efficient computations with functions defined on a triangle, it is important to be able to rapidly compute expansion coefficients of $f(x,y)$ so that 
$$f(x,y) \approx \sum_{n=0}^{N} \sum_{k=0}^n a_{n,k} P_{n,k}(x,y)$$ 
for a selected integer $N$. Recently, Slevinsky developed and implemented a fast backward stable algorithm for precisely this task~\cite{Slevinsky_17_03,Slevinsky_17_02}, accompanied with an optimized multithreaded open-source C library~\cite{FastTransforms}, allowing expansions to be computationally feasible for relatively large $N$. This has greatly improved the practicality of spectral methods for triangular domains. 

The use of bivariate orthogonal polynomials on triangles has a long history in the spectral element method and $p$-finite element method ($p$-FEM) literature~\cite{Karniadakis_Sherwin_13}, going back to Dubiner~\cite{Dubiner_91_01}. The polynomials in~\cref{eq:Koornwinder} lead to highly structured $p$-FEM discretization matrices for PDEs of the form $\mathcal{L}u = -\nabla \cdot (A(x,y) \nabla u)$, and when $A(x,y)$ is a constant one can derive sparse discretizations that can be generated in optimal complexity~\cite{Beuchler_Schoeberl_06_01}. The present work can be viewed as a generalization of this construction to strong formulations of PDEs that are not necessarily elliptic. Moreover, the properties of bivariate orthogonal polynomials allows us to retain sparsity for high differential order and variable coefficient PDEs (see~\cref{ex:Helmholtz1}).

Our main idea is to exploit a hierarchy of sparse recurrence relations~\cite{Olver_18_01} that hold between the polynomials in~\cref{eq:Koornwinder} and the so-called Jacobi polynomials on the triangle~\cite{Dunkl_14_01,Koornwinder_75_01}:\footnote{The polynomials $P_{n,k}^{(a,b,c)}$ for $a,b,c>-1$ satisfy $\iint_T P_{n,k}^{(a,b,c)}(x,y) P_{m,\ell}^{(a,b,c)}(x,y)x^ay^b(1-x-y)^c \text{d}x\text{d}y = 0$ if $n\neq m$ or $k\neq \ell$.} 
\begin{equation} 
P_{n,k}^{(a,b,c)}(x,y)  = \tilde P_{n-k}^{(2k+b+c+1,a)}\!(x)(1-x)^k \tilde P_k^{(c,b)}\!\left(\tfrac{y}{1-x}\right),\qquad n\geq k\geq 0,
\label{eq:JacobiPolynomials} 
\end{equation} 
where $a,b,c>-1$. In a manner that is analogous to the ultraspherical spectral method~\cite{Olver_13_01,Townsend_15_01}, we represent the action of partial derivatives by representing the domain and range as vectors of coefficients in different bases so that the matrix representation is sparse. For example, while $\smash{\tfrac{\partial}{\partial y} P_{n,k}}$ for $k\geq 1$ cannot be written as a sparse vector of $P_{n,k}$ coefficients, we have $\smash{\frac{\partial}{\partial y} P_{n,k} = (k+1)P_{n-1,k-1}^{(0,1,1)}}$, (see~\cref{Corollary:Differentiation}). This means that the first partial derivative with respect to $y$ has a sparse matrix representation if the range is represented as a vector of $P_{n,k}^{(0,1,1)}$ coefficients. This can be summarized as
\[
u = \sum_{n=0}^{N} \sum_{k=0}^n a_{n,k} P_{n,k}  \quad \Rightarrow \quad \frac{\partial u}{\partial y} = \sum_{n=0}^{N-1} \sum_{k=0}^n (k+1)a_{n+1,k+1} P_{n,k}^{(0,1,1)}. 
\]
Moreover, these sparse recurrence relationships form a hierarchy, in the sense that $\smash{\tfrac{\partial^s}{\partial y^s}}$ has a sparse representation if the range is represented as a vector of $\smash{P_{n,k}^{(0,s,s)}}$ coefficients, for any $s\geq 0$.
Similar, but slightly more complicated, sparse recurrence relations hold for $\smash{\tfrac{\partial^s}{\partial x^s} P_{n,k}}$ when the range is represented as vectors in $\smash{P_{n,k}^{(s,0,s)}}$ coefficients for any $s\geq 0$ (see~\cref{sec:differentiation}). 

One is also able to combine sparse representations to discretize linear PDEs. For example, the Laplacian operator $\Delta u = u_{xx} + u_{yy}$  can be represented by a sparse matrix if the range is selected to be a vector of $\smash{P_{n,k}^{(2,2,2)}}$ coefficients while the domain is a vector of $P_{n,k}$ coefficients. This is because there exist sparse conversion relationships for converting between certain $\smash{P_{n,k}^{(a,b,c)}}$ bases (see~\cref{conversion}). \Cref{fig:Laplace} illustrates a typical schema that illustrates how sparse recurrences are combined. In the language of finite-element methods, the \textit{test} and \textit{trial} spaces are different with a sparse embedding of the trial space in the test space.

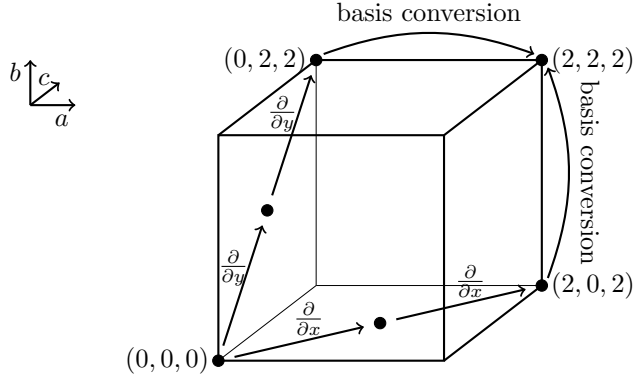
\begin{figure} 
\center
\begin{tikzpicture} 
\draw[black,solid,thick] (0,0)--(3,0)--(3,3)--(0,3)--(0,0);
\draw[black,solid,ultra thin] (1.3,1)--(4.3,1);
\draw[black,solid,thick] (4.3,1)--(4.3,4);
\draw[black,solid,thick] (4.3,4)--(1.3,4);
\draw[black,solid,ultra thin] (1.3,4)--(1.3,1);
\draw[black,solid,ultra thin] (0,0)--(1.3,1);
\draw[black,solid,thick] (3,3)--(4.3,4);
\draw[black,solid,thick] (3,0)--(4.3,1);
\draw[black,solid,thick] (0,3)--(1.3,4);
\draw[black,fill=black] (0,0) circle (.5ex);
\draw[black,fill=black] (1.5+1.3/2,1/2) circle (.5ex);
\draw[black,fill=black] (4.3,1) circle (.5ex);
\draw[black,fill=black] (1.3/2,1.5+1/2) circle (.5ex);
\draw[black,fill=black] (1.3,4) circle (.5ex);
\draw[black,fill=black] (4.3,4) circle (.5ex);
\draw[black,thick,->] (0.215,1/20)--(1.935,1/2-1/20);
\draw[] (1.2,.9) node[anchor=north] {$\tfrac{\partial}{\partial x}$};
\draw[black,thick,->] (1.5+1.3/2+0.215,1/2+1/20)--(1.5+1.3/2+1.935,1-1/20);
\draw[] (1.5+1.3/2+1.2,1/2+.9) node[anchor=north] {$\tfrac{\partial}{\partial x}$};
\draw[black,thick,->] (1.3/20,.2)--(1.3/2-1.3/20,1.8);
\draw[black,thick,->] (1.3/2+1.3/20,2.2)--(1.3/2+1.3/2-1.3/20,3.8);
\draw[] (0.2,1.6) node[anchor=north] {$\tfrac{\partial}{\partial y}$};
\draw[] (0.85,3.65) node[anchor=north] {$\tfrac{\partial}{\partial y}$};
\draw [black,thick,->] (1.4,4.08) to [out=20,in=160] (4.2,4.08);
\draw [black,thick,->] (4.38,1.1) to [out=70,in=290] (4.38,3.9);
\draw[] (4.3,4) node[anchor=west] {$(2,2,2)$};
\draw[] (4.3,1) node[anchor=west] {$(2,0,2)$};
\draw[] (0,0) node[anchor=east] {$(0,0,0)$};
\draw[] (1.3,4) node[anchor=east] {$(0,2,2)$};
\draw[black,thick,->] (-2.5,3.4)--(-2.5,4);
\draw[black,thick,->] (-2.5,3.4)--(-1.9,3.4);
\draw[black,thick,->] (-2.5,3.4)--(-2.5+1.3/2*0.6,3.4+1/2*0.6);
\draw[] (-2.3,3.2) node[anchor=west] {$a$};
\draw[] (-2.7,3.6) node[anchor=south] {$b$};
\draw[] (-2.52,3.55) node[anchor=south west] {$c$};
\draw[] (2.8,4.4) node[anchor=south] {basis conversion};
\draw[] (4.7,2.5) node[anchor=west] {\rotatebox{270}{basis conversion}};
\end{tikzpicture} 
\caption{The Laplace operator acting on vectors of $P_{n,k}=P_{n,k}^{(0,0,0)}$ coefficients has a sparse matrix representation if the range is represented as vectors of $P^{(2,2,2)}_{n,k}$ coefficients. Here, the arrows indicate that the corresponding operation has a sparse matrix representation when the domain is $\smash{P_{n,k}^{(a,b,c)}}$ coefficients, where $(a,b,c)$ is at the tail of the arrow, and the range is $\smash{P_{n,k}^{(\tilde{a},\tilde{b},\tilde{c})}}$ coefficients, where $(\tilde{a},\tilde{b},\tilde{c})$ is at the head of the arrow. 
}
\label{fig:Laplace} 
\end{figure}

The paper is organized as follows. In~\cref{Section:ops}, we establish some general computational tools for bivariate orthogonal polynomials such as Jacobi operators and the bivariate Clenshaw algorithm. In~\cref{Section:computationkoornwinder}, we specialize to~\cref{eq:JacobiPolynomials}, where the additional structure allows us to achieve a more efficient Clenshaw algorithm. In~\cref{Section:solvingpdes}, we employ weighted Jacobi polynomials on the triangle to solve PDEs such as a variable coefficient Helmholtz equation and a biharmonic equation with zero Dirichlet conditions. In~\cref{Section:dirichlet}, we extend the ideas to solve linear PDEs with nonzero Dirichlet conditions, and in~\cref{Section:systems} we demonstrate that the framework easily generalizes to systems of PDEs so that it can be used to solve the Helmholtz equation in a polygonal domain. 

The appendices contain relationships and additional formulae about orthogonal polynomials on the triangle. Our spectral method depends on explicit rational recurrence relationships that the polynomials $P_{n,k}^{(a,b,c)}(x,y)$  satisfy for  differentiation, weighted differentiation, and conversion, which we detail in \cref{Appendix:PRecurrences}. Tackling Dirichlet conditions requires a modification of the basis to enable sparse restriction operators, which we define as $Q_{n,k}^{(a,b,c)}$ in \cref{Appendix:DirichletBasis}. These also have explicit rational recurrence relationships for differentiation and conversion, which we derive in \cref{Appendix:DirichletConversion}.

\section{Computations with bivariate orthogonal polynomials} \label{Section:ops} 
In this section, we derive several computational tools for bivariate orthogonal polynomials such as the Jacobi operators, Clenshaw's algorithm, and multiplication operators. Later, in~\cref{Section:computationkoornwinder}, we specialize these tools to the Jacobi polynomials on the triangle (see~\cref{eq:JacobiPolynomials}).  

Consider a sequence of bivariate polynomials 
\[
p_{0,0}(x,y),p_{1,0}(x,y),p_{1,1}(x,y),p_{2,0}(x,y),p_{2,1}(x,y),p_{2,2}(x,y),\ldots,
\]
where $\left\{p_{n,k}\right\}_{0\leq k\leq n\leq N}$ is a basis for the space of bivariate polynomials of total degree $\leq N$,\footnote{We say that a bivariate polynomial $q(x,y)$ is of total degree $\leq N$ if $q(x,y) = \sum_{n=0}^N\sum_{k=0}^n b_{n,k} x^ky^{n-k}$ for some coefficients $b_{n,k}$.} for any integer $N$. We say that such a sequence is orthogonal with respect to a nonnegative weight function $w(x,y)$ on $\Omega \subset \mathbb{R}^2$ if 
\begin{equation} 
\iint_{\Omega} w(x,y) p_{n,k}(x,y) p_{m,\ell}(x,y) \dx \dy= \begin{cases} d_{n,k}, & (n,k) = (m,\ell), \\ 0, & (n,k)\neq (m,\ell), \end{cases} 
\label{eq:orthogonality} 
\end{equation} 
where $d_{n,k}$ are positive numbers. 

%

It is notationally convenient to write the bivariate polynomials of the same total degree as a single vector-valued polynomial~\cite{Dunkl_14_01} as follows:   
\[
\PP_n(x,y) = \Vectt[{p_{n,0}(x,y)},\dots,{p_{n,n}(x,y)}].
\]
One can then state the orthogonality condition in~\cref{eq:orthogonality} more succinctly as 
\begin{equation} 
\iint_{\Omega}w(x,y)   \PP_m(x,y) \PP_n(x,y)^\top \dx \dy = \begin{cases} D_n, & m = n, \\
              \mathbf{0}, & m \neq n,
              \end{cases}
              \label{eq:OrthogonalPolynomials} 
\end{equation}
where $D_n$ is the $(n+1) \times (n+1)$ diagonal matrix with entries $d_{n,k}$ for $0\leq k\leq n$, $\mathbf{0}$ is a matrix of all zeros of the appropriate size, and $\PP_n(x,y)^\top$ denotes the transpose of $\PP_n(x,y)$. The sequence of bivariate polynomials are normalized (orthonormal) if $D_n$ is the identity matrix for all $n\geq 0$.  We also use the notation
$$\VPP\!  = \! \vectt[\PP_0({x,y}), \PP_1({x,y}),\dots]$$
to encode all of the polynomials as a single infinite vector.

\subsection{Bivariate function approximation}
A sequence of bivariate orthogonal polynomials on $\Omega\subset \mathbb{R}^2$ can be used to approximate functions that are square integrable with respect to the associated weight function $w(x,y)$ on $\Omega$.  For example, provided $\iint_\Omega w(x,y)\left|f(x,y)\right|^2\dx\dy <\infty$, we can write
\begin{align} 
f(x,y) &= \sum_{n=0}^\infty \sum_{k=0}^n  f_{n,k} p_{n,k}(x,y) =  \sum_{n=0}^\infty \PP_n(x,y)^\top \pmb{f}_n  =  \VPPt  \vc f,
\label{eq:KoornwinderExpansions} 
\end{align} 
where $\pmb{f}_n \! = \! \vectt[f_{n,0},\dots,f_{n,n}]$\! and $\vc f\!  =\!  \vectt[\pmb{f}_0,\pmb{f}_1,\dots]$\! are the coefficients of the expansion.
Here, the first equality in~\cref{eq:KoornwinderExpansions} is understood in the sense that the difference between the left- and right-hand side is zero in the norm associated to the inner-product. 

The expansion coefficients in~\cref{eq:KoornwinderExpansions} are defined by the following integrals:  
\begin{equation} 
f_{n,k} = \frac{1}{d_{n,k}}\iint_\Omega w(x,y)f(x,y) p_{n,k}(x,y) \dx\dy, \qquad n\geq k\geq 0, 
\label{eq:coeffs} 
\end{equation} 
where $d_{n,k}$ is the orthogonality constant in~\cref{eq:orthogonality}. In practice, it is usually desirable for the expansion coefficients to rapidly decay, i.e., $\|\pmb{f}_0\|, \|\pmb{f}_1\|, \ldots$ is a rapidly decaying sequence. 


\subsection{Jacobi operators}\label{Section:JacobiOperators}
In the theory of univariate orthogonal polynomial an important object is the Jacobi operator, which is a self-adjoint linear operator given by a tridiagonal matrix~\cite{Teschl}.  It is closely related to the fact that a sequence of univariate orthogonal polynomials satisfies a three-term recurrence.  For example, if $p_0,p_1,\ldots,$ is a sequence of univariate orthogonal polynomials, then 
$$
b_{k}p_{k+1}(x) + a_k p_k(x) + c_{k-1} p_{k-1}(x)= x p_{k}(x)
$$ 
for $k\geq 1$~\cite[Thm.~3.2.1]{Szego_39_01} and 
\[
J\VP= x \VP, \qquad J = \begin{pmatrix} a_0 & b_0 \cr 
c_0 & a_1 & b_1 \cr 
&c_1 & a_2  & \ddots \cr && \ddots & \ddots\end{pmatrix}.
\]
The Jacobi operator associated with $p_0,p_1,\ldots,$ is the symmetric tridiagonal matrix obtained by a diagonal similarity transform of $J$~\cite{Teschl}. This diagonal similarity transform corresponds precisely to the normalization factors required to orthonormalize the sequence of univariate orthogonal polynomials. The transformation is possible provided $0 < b_{k}^{-1} c_{k} < \infty$ for all $k$. In particular, if $\{p_k(x)\}_{k\geq 0}$ are orthonormal, then $J$ is a symmetric tridiagonal matrix. 

A related fact that is important for designing spectral methods is that $J^\top$ can be interpreted as the ``multiplication-by-$x$" operator. That is, if $f(x) =  \VPPt \vc f$ we have
\[
x f(x) = x \VPPt \vc f = \VPPt J^\top \vc f.
\]
In other words, $J^\top \vc f$ gives the coefficients of $x f(x)$.

The analogue for bivariate orthogonal polynomials is a {\it pair} of commuting operators $J_x$ and $J_y$~\cite[\S3.4]{Dunkl_14_01}, which satisfy 
\begin{equation}
J_x \VPP= x \VPP, \qquad J_y \VPP= y\VPP.
\label{eq:bivariateRecurrence}
\end{equation} 
Here, $J_x$ and $J_y$ are block tridiagonal operators so that  
	$$J_x = \sopmatrix{A_0^x & B_0^{x} \cr C_0^x & A_1^x & B_1^{x} \cr & C_1^x & A_2^x & \ddots \cr && \ddots & \ddots}, \qquad J_y = \sopmatrix{A_0^y & B_0^{y} \cr C_0^y & A_1^y & B_1^y \cr & C_1^y & A_2^y & \ddots \cr && \ddots & \ddots},$$
where $A_n^x, A_n^y \in \R^{(n+1) \times (n+1)}$ ,  $B_n^x,B_n^y \in \R^{(n+1) \times (n+2)}$, and $C_n^x,C_n^y \in \R^{(n+2) \times (n+1)}$.  When deriving spectral methods the operators $J_x$ and $J_y$ play an important role as they can be interpreted as operators for ``multiplication-by-$x$" and ``multiplication-by-$y$," respectively. That is,
\begin{equation} 
x \VPPt \vc f = \VPPt J_x^\top \vc f \qand   y \VPPt \vc f = \VPPt J_y^\top \vc f.
\label{eq:Multiplication} 
\end{equation} 
In other words, if $f(x,y) = \VPPt \vc f$, then  $J_x^\top \vc f$ and $J_y^\top \vc f$ give the coefficients of $x f(x,y)$ and $y f(x,y)$, respectively.

\subsection{Recurrences and the Clenshaw algorithm}\label{Section:Clenshaw}
For univariate orthogonal polynomials, the three-term recurrence encoded by a Jacobi operator can be used to construct the polynomials themselves at a specified point via  forward substitution.   Clenshaw's algorithm is a closely related concept that allows the evaluation of a finite series expansion of univariate orthogonal polynomials at a point~\cite{Clenshaw_55_01}. While it is common to interpret the three-term recurrence/Clenshaw's algorithm as recursions, we prefer to interpret them as forward/backward substitution on a lower/upper triangular system associated to the Jacobi operator as this point-of-view facilitates generalization to the bivariate setting.  

Let $p_0(x), p_1(x), \ldots,$ be a sequence of univariate orthogonal polynomials such that $p_0(x) = 1$, and suppose that we wish to evaluate $f(x) = \sum_{k=0}^N a_k p_k(x)$ at $x_*\in \mathbb{R}$.
Since $p_0(x), p_1(x), \ldots,$ satisfy a three-term recurrence of the form $b_{k}p_{k+1}(x) = (x-a_k)p_{k}(x) -c_{k-1}p_{k-1}(x)$ for $k\geq 1$~\cite[Thm.~3.2.1]{Szego_39_01}, we find that 
\begin{equation} 
	L_N(x_*) \!\! \begin{pmatrix} p_0(x_*) \\[3pt]p_1(x_*) \\[3pt] p_2(x_*)\\[3pt] \vdots \\[3pt] p_N(x_*)\end{pmatrix} = \sopmatrix{1 \\[3pt] a_0 \! - \! x_* & b_0 \\[3pt] c_0 & a_1\!  -\! x_* & b_1 \\[3pt] &\ddots & \ddots & \ddots \\[3pt] & & c_{N-2} & a_{N-1}\! -\! x_* & b_{N-1} }\!\! \begin{pmatrix} p_0(x_*) \\[3pt]p_1(x_*) \\[3pt] p_2(x_*)\\[3pt] \vdots \\[3pt] p_N(x_*)\end{pmatrix}= e_0,
\label{eq:lowertriangular} 
\end{equation} 
where $b_0 p_1(x) = (a_0 - x)p_0(x)$ and $e_0 = \vectt[1,0,\dots,0]$. 
 
Forward substitution on the lower triangular linear system in~\cref{eq:lowertriangular} allows one to evaluate $p_k(x_*)$ for $k\geq 0$ from which one could evaluate $f(x_*) = \sum_{k=0}^N a_k p_k(x_*)$.  
For stability purposes, the Clenshaw algorithm evaluates expansions more directly and can be written as 
\begin{equation}\label{eq:clenshaw}
	f(x_*) = \vectt[p_0(x_*),\dots,p_N(x_*)] \pmb{a}  = e_0^\top \left( \left(L_N(x_*)\right)^{-\top} \pmb{a}\right), \qquad  \pmb{a} = \Vectt[a_0,\dots,a_N].
\end{equation}
Therefore, the Clenshaw algorithm is equivalent to solving the upper triangular linear system $(L_N(x_*))^{\top}\pmb{v} = \pmb{a}$, followed by returning the first entry of $\pmb{v}$. Since $L_N(x_*)$ only has three nonzero subdiagonals, the algorithm requires $\mathcal{O}(N)$ operations to evaluate $f(x_*) = \sum_{k=0}^N a_k p_k(x_*)$. 

%

The bivariate case is more involved. Given $(x_*,y_*)\in\mathbb{R}^2$, we would like to evaluate $f(x,y) = \sum_{n=0}^N \sum_{k=0}^n a_{n,k} p_{n,k}(x,y)$ at $(x_*,y_*)$, where (without loss of generality) we assume that $p_{0,0}(x,y) = 1$.  Since there are three-term recurrence relations in both $x$ and $y$ (see~\cref{eq:bivariateRecurrence}) we find that
\begin{equation}
	L_N(x_*,y_*) \mathbf{P}(x_*,y_*)= \sopmatrix{1 \cr A_0^x - x_*I_1 & B_0^x \cr A_0^y - y_*I_1 & B_0^y \cr C_0^x & A_1^x -x_*I_2 & B_1^x \cr 
	C_0^y & A_1^y -y_*I_2 & B_1^y \cr
	 &\ddots & \ddots & \ddots}  \mathbf{P}(x_*,y_*) = \Vectt[1,{\bf 0}_{1 \times 1}, {\bf 0}_{1 \times 1}, {\bf 0}_{2 \times 1},  {\bf 0}_{2 \times 1}, \dots],
\label{eq:bivariateL} 
\end{equation} 
where $I_m$ is the $m\times m$ identity matrix and ${\bf 0}_{m\times 1}$ is the zero vector of length $m$.  Unlike the univariate case, the system is not lower triangular and so we cannot immediately invert this system via forward recurrence to find $\mathbf{P}(x_*,y_*) $.



A reformulation that  allows for inversion is  to multiply the system to reduce the blocks above the diagonal in~\cref{eq:bivariateL} to the identity. First, note that the blocks
$$
B_n = \Vectt[B_n^x,B_n^y] \in \R^{(2n+2) \times (n+2)}
$$
have full column rank for $n\geq 0$~\cite[Theorem~3.3.4]{Dunkl_14_01}. Therefore, $B_n$ has a left-inverse $B_n^+$ for $n\geq 0$ such that $B_n^+ B_n = I_{n+2}$. It follows that an equivalent evaluation scheme can be designed from
\begin{equation} 
\tilde{L}_N(x_*,y_*) \mathbf{P}(x_*,y_*) = \Vectt[1, {\bf 0}_{1 \times 1},  {\bf 0}_{2 \times 1},\dots], \quad \tilde{L}_N(x_*,y_*) = \sopmatrix{1 \cr  & B_0^+   \cr && B_1^+ \cr &&&\ddots } L_N(x_*,y_*).
\label{eq:lowertriangularsystem} 
\end{equation} 
Since $\tilde{L}_N(x_*,y_*)$ is lower triangular we can construct $ \mathbf{P}(x_*,y_*)$ via forward substitution. 

Furthermore, a natural bivariate analogue of Clenshaw's algorithm follows from writing
$$
f(x_*,y_*) = \mathbf{P}(x_*,y_*)^\top  \vc a = {\vc e}_0^\top\! \left( \left(\tilde L_N(x_*,y_*)\right)^{-\top} \! \vc a\right).
$$
Thus $f(x_*,y_*)$ can be evaluated by solving an upper triangular linear system  using back substitution.

If $ B_n^{+}$ are dense matrices for $n\geq 0$, then forward recurrence and Clenshaw's algorithm require $\mathcal{O}(N^3)$ operations. However, in the special case of Jacobi polynomials on the triangle, the matrices involved are sparse  (see~\cref{Section:computationkoornwinder}) 
and the complexity can be reduced to $\mathcal{O}(N^2)$ operations, which is optimal.


\subsection{Multiplication operators}\label{Section:Multiplication}
The relations in~\cref{eq:Multiplication} show that $J_x^\top$ and $J_y^\top$ are operators that represent ``multiplication-by-$x$" and ``multiplication-by-$y$", respectively, in the bivariate orthogonal polynomial basis. Here, we combine these operators together to construct multiplication matrices that represent multiplication by a degree $d$ polynomial expanded as $q(x,y)= \sum_{n=0}^d \sum_{k=0}^n q_{n,k} p_{n,k}(x,y)$. 

Suppose we are given a function $f(x,y) = \sum_{n=0}^N \sum_{k=0}^n a_{n,k} p_{n,k}(x,y)$, and wish to find the expansion coefficients of $g(x,y) = q(x,y)f(x,y)$, where the degree of $f$ and $q$ can differ. Using  $J_x^\top$ and $J_y^\top$, we find that  
\[
\vc g = M_q \vc f, \qquad M_q = q(J_x^\top, J_y^\top), 
\]
where the definition of $q(J_x^\top, J_y^\top)$ is\footnote{The matrix $M_q$ is the same as that studied in the literature on bivariate functions of matrices. More precisely, $M_q$ is denoted by $M_q = q\!\left\{J_x^\top,J_y\right\}\!(I)$ in~\cite{Kressner_10_01}, where $I$ is the identity matrix and the missing transpose on the second argument is a matter of convention (see~\cite[Def. 2.1]{Kressner_10_01}).}
\begin{equation} 
q(J_x^\top, J_y^\top) = \sum_{n=0}^d \sum_{k=0}^n c_{nk} (J_x^\top)^{n-k} (J_y^\top)^k, \qquad q(x,y) = \sum_{n=0}^d \sum_{k=0}^n c_{nk} x^{n-k} y^k.
\label{eq:MonomialDefinition} 
\end{equation} 
Since $J_x$ and $J_y$ are block-tridiagonal and each matrix-matrix product increases the block-bandwidth by one, we see that $q(J_x^\top, J_y^\top)$ is also a block-banded with upper and lower block-bandwidth $d$.

The expression in~\cref{eq:MonomialDefinition} is not ideal for computations when $d$ is moderately large because of the inherent ill-conditioning in the monomial basis. It is often computationally beneficial to expand $q(x,y)$ in a bivariate orthogonal polynomial expansion and evaluate $q(J_x^\top, J_y^\top)$ using an operator-valued analogue of Clenshaw's algorithm~\cite{Slevinsky_17_01,Vasil_16_01}.   

The operator-valued analogue of Clenshaw's algorithm for evaluating $q(J_x^\top, J_y^\top)$ is equivalent to the expression:  
\begin{equation}
M_q = (e_0\otimes \mathcal{I}) (L^{-\top} \pmb q), \quad 
L = \sopmatrix{I_1\otimes \mathcal{I} \cr A_0^x\otimes \mathcal{I} - I_1\otimes J_x & B_0^x\otimes \mathcal{I} \cr A_0^y\otimes \mathcal{I} - I_1\otimes J_y & B_0^y\otimes \mathcal{I} \cr C_0^x\otimes \mathcal{I} & A_1^x\otimes \mathcal{I} - I_2\otimes J_x & B_1^x\otimes \mathcal{I} \cr 
	C_0^y\otimes \mathcal{I} & A_1^y\otimes \mathcal{I} -I_2\otimes J_y & B_1^y\otimes \mathcal{I} \cr
	 &\ddots & \ddots & \ddots} 
\label{eq:bivariateL2},
\end{equation} 
where $\mathcal{I}$ is an infinite identity matrix, $e_0$ is the first canonical unit vector, and $\pmb q$ is the vector of coefficients for $q(x,y)$ in the bivariate orthogonal polynomial expansion. Here we use the Kronecker product denoted $\otimes$.

%
In general, $J_x$ and $J_y$ have dense blocks so that the total number of nonzero entries in the principal $N\times N$ block matrix of $q(J_x^\top, J_y^\top)$ is $\mathcal{O}(N^3)$, and the complexity of constructing $q(J_x^\top, J_y^\top)$ using the operator-valued Clenshaw's algorithm is $\mathcal{O}(N^4)$ (where the total number of unknowns is $\mathcal{O}(N^2)$). In the case of Jacobi polynomials on the triangle, the blocks of $J_x$ and $J_y$ are tridiagonal and there are only $\mathcal{O}(N^2)$ nonzero entries, which can be calculated in optimal complexity.


\section{Computing with Jacobi polynomials on the triangle}\label{Section:computationkoornwinder}
We now specialize the algorithmic ideas in~\cref{Section:ops} to Jacobi polynomials on the triangle (see~\cref{eq:JacobiPolynomials}). Since these polynomials have additional structure, more efficient algorithms can be designed.

We denote the Jacobi polynomials on the triangle that are orthogonal with respect to $x^a y^b (1-x-y)^c$ with $a,b,c>-1$ by
\[
\PP_n^{(a,b,c)}(x,y) = \begin{pmatrix}P_{n,0}^{(a,b,c)}(x,y) \cr \vdots \cr P_{n,n}^{(a,b,c)}(x,y) \end{pmatrix}, \qquad  \mathbf{P}^{(a,b,c)}(x,y) = \Vectt[\PP_0^{(a,b,c)}(x), \PP_1^{(a,b,c)}(x),\dots],
\]
and note that series expansions in the $P_{n,k}^{(a,b,c)}$ basis can be expressed as 
\[
f(x,y) = \sum_{n=0}^\infty \sum_{k=0}^n f_{n,k} P_{n,k}^{(a,b,c)}(x,y) = \mathbf{P}^{(a,b,c)}(x,y)^\top \pmb{f},
\]
where $\pmb{f}$ is the vector of $P_{n,k}^{(a,b,c)}$ coefficients for $f$. These expansion coefficients can be efficiently computed from samples of $f$ by a fast, backward stable algorithm~\cite{Slevinsky_17_03,Slevinsky_17_02,FastTransforms}. We further denote the  shifted Jacobi polynomials on the unit interval as
$$
{\mathbf P}^{(a,b)}(x) = \Vectt[\tilde P_0^{(b,a)}(x), \tilde P_1^{(b,a)}(x),\dots].
$$
where the alternative ordering of $a$ and $b$ helps to build analogies with the triangle case.

%


\subsection{Conversion operators}\label{conversion}
An important property of Jacobi polynomials on the interval is that they have banded conversion operators, which translate between coefficients from expansion in $\smash{P_n^{(a,b)}}$ to $\smash{P_n^{(a+1,b)}}$ or $\smash{P_n^{(a,b+1)}}$. In terms of converting expansions between bases, we can express such conversions as
$$
f(x) =  \PPPt^(a,b)(x) \vc f =  \mathbf{P}^{(a+1,b)}(x)^\top S_{(a,b)}^{(a+1,b)} \vc f =  \mathbf{P}^{(a,b+1)}(x)^\top S_{(a,b)}^{(a,b+1)} \vc f,
$$
where $S_{(a,b)}^{(a+1,b)}$ and $S_{(a,b)}^{(a,b+1)}$ are upper bidiagonal operators, with rational entries as given in \cite[(18.9.5)]{DLMF}. 

Jacobi polynomials on the triangle have a similar property: we can  increment either $a$, $b$, or $c$ in the expansion by one: 
\meeq{
f(x,y) =   \PPPt^(a,b,c)(x,y) \FF 
	=  \PPPt^(a+1,b,c)(x,y)  S_{(a,b,c)}^{(a+1,b,c)}  \FF \ccr
		=  \PPPt^(a,b+1,c)(x,y) S_{(a,b,c)}^{(a,b+1,c)}  \FF
			=  \PPPt^(a,b,c+1)(x,y)  S_{(a,b,c)}^{(a,b,c+1)}  \FF.
	}
Each of these operators are sparse: they have block-bandwidths $(0,1)$ with diagonal blocks for $S_{(a,b,c)}^{(a+1,b,c)}$ and upper bidiagonal blocks for $S_{(a,b,c)}^{(a,b+1,c)}$ and $S_{(a,b,c)}^{(a,b,c+1)}$. The entries are rational, and can be determined in closed form by the recurrence relationships in \cref{Corollary:Conversion}. 

\subsection{Constructing Jacobi operators} 
For Jacobi polynomials, the recurrence relationships that give rise to tridiagonal Jacobi operators, representing multiplication by $x$,  are well-known. However, the Jacobi operators can alternatively be derived via lower bidiagonal {\it lowering operators} $L_{(a,b)}^{(a-1,b)}$ and $L_{(a,b)}^{(a,b-1)}$ \cite[18.9.6]{DLMF} that represent multiplication by $x$ and $1-x$:
\meeq{
x f(x) =  \PPPt^(a-1,b)(x) L_{(a,b)}^{(a-1,b)} \vc f 
=   \PPPt^(a,b)(x) S_{(a-1,b)}^{(a,b)} L_{(a,b)}^{(a-1,b)}  \vc f.
}
Similarly, $1-x$ is equivalent to $S_{(a,b-1)}^{(a,b)} L_{(a,b)}^{(a,b-1)}$. In other words, the Jacobi operator corresponding to multiplication by $x$  can be constructed via
$$
J^\top \equiv  S_{(a-1,b)}^{(a,b)} L_{(a,b)}^{(a-1,b)} \equiv I -S_{(a,b-1)}^{(a,b)} L_{(a,b)}^{(a,b-1)}  .
$$
Note that the product of a lower bidiagonal operator $L_{(a,b)}^{(a-1,b)}$ and an upper bidiagonal operator $S_{(a-1,b)}^{(a,b)} $ is a tridiagonal operator, as expected.

To construct the Jacobi operators $J_x$ and $J_y$ for Jacobi polynomials on the triangle, we first note that there exists three lowering operators that satisfy:
\meeq{
x f(x,y) = \PPPt^(a-1,b,c)(x,y) L_{(a,b,c)}^{(a-1,b,c)} \FF,\ccr
y f(x,y) =  \PPPt^(a,b-1,c)(x,y) L_{(a,b,c)}^{(a,b-1,c)} \FF,\ccr
z f(x,y) =  \PPPt^(a,b,c-1)(x,y) L_{(a,b,c)}^{(a,b,c-1)} \FF,
}
where  $z := 1- x-y$. We will use other indices to indicate multiple lowering in a row, e.g.,
$$
L_{(1,1,1)}^{(0,0,0)} := L_{(1,0,0)}^{(0,0,0)} L_{(1,1,0)}^{(1,0,0)} L_{(1,1,1)}^{(1,1,0)}
$$
corresponds to multiplication by $x y z$, where the choice for navigating the parameter tree is arbitrary.

We can construct the Jacobi operators from the lowering operators via
\[
J_x^\top = S_{(a-1,b,c)}^{(a,b,c)} L_{(a,b,c)}^{(a-1,b,c)}, \qquad
J_y^\top = S_{(a,b-1,c)}^{(a,b,c)} L_{(a,b,c)}^{(a,b-1,c)}.
\]
Note that the entries of the lowering operators can be determined by the recurrences in \cref{Corollary:Lowering}, and they are sparse. In particular, they have block-bandwidths $(1,0)$ and diagonal blocks for $L_{(a,b,c)}^{(a-1,b,c)}$ and lower-bidiagonal blocks for $L_{(a,b,c)}^{(a,b-1,c)}$ and $L_{(a,b,c)}^{(a,b,c-1)}$. This block structure ensures that $J_x$ is block-tridiagonal with diagonal block and that $J_y$ is block-tridiagonal with tridiagonal blocks.  Finally, $J_{x}$ and $J_{y}$ commute because the $L$ and $S$ operators commute:
\begin{equation}
S_{(a-1,b)}^{(a,b)} L_{(a,b)}^{(a-1,b)} = L_{(a,b)}^{(a+1,b)} S_{(a+1,b)}^{(a,b)}, \quad S_{(a,b-1)}^{(a,b)} L_{(a,b)}^{(a,b-1)} = L_{(a,b)}^{(a,b+1)} S_{(a,b+1)}^{(a,b)}.
\end{equation}

\subsection{Implementation of Clenshaw's algorithm and multiplication operators}
We now exploit the sparsity structure of $J_x$ and $J_y$ to get an $\mathcal{O}(N)$ complexity Clenshaw algorithm. In particular,  using the notation of \cref{Section:Clenshaw}, since $B_n^x$ is diagonal and $B_n^y$ is tridiagonal we can construct a  simple left-inverse $B_n^+$.  That is, we have the following structure:
	$$B_n = \Vectt[B_n^x,B_n^y] = \sopmatrix{\times \cr & \times \cr && \ddots \cr &&& \times \cr &&&&\times & 0 \cr
		\times &\times \cr\times  & \times & \times \cr &\ddots & \ddots & \ddots \cr && \times & \times & \times\cr &&&\times & \times & \times
	}. 
	$$
Let $B_1 = B_n^x[0\!:\!n,0\!:\!n]$ denote the first $(n+1) \times (n+1)$ sub-block of $B_n^x$, let $b_2 = B_n^y[n,n+1]$, and let
\meeq{
\vc b_1^\top =- { b_2^{-1}} \vect[ {\bf 0}_{1\times n-2},  {B_n^y[n,n-1] \over  B_n^x[n-1,n-1]} , {B_n^y[n,n] \over  B_n^x[n,n]}] .
}
  Then, the following matrix is a pseudo-inverse of $B_n$:
$$
B_n^+ := \begin{pmatrix}  B_1^{-1} &  {\bf 0}_{n \times n-1}  &  {\bf 0}_{n \times 1} \cr
				\vc b_1^\top &  {\bf 0}_{1 \times n-1}  &  b_2^{-1}
							\end{pmatrix}.
$$	
Note that $B_n^+$ can be applied to a vector in $\mathcal{O}(n)$ operations. When incorporation into Clenshaw's algorithm described in \secref{Clenshaw}, this gives an optimal $\mathcal{O}(N^2)$ algorithm for evaluating functions. Furthermore, when incorporated into the construction of the multiplication operators (see \secref{Multiplication}), we find that one can construct multiplication operators in $\mathcal{O}(N^2)$ operations.
	
%

\subsection{Differentiation}\label{sec:differentiation} 
Jacobi polynomials on the interval have banded recurrence relationships for their derivatives by incrementing both of the parameters, that is, we can represent
$$
f'(x) = \PPPt^(a+1,b+1)(x) D_{(a,b)}^{(a+1,b+1)} \vc f,
$$
where $D_{(a,b)}^{(a+1,b+1)} $ is zero except for the first super-diagonal~\cite[18.9.15]{DLMF}. They also have banded recurrence relationship for their weighted derivatives that decrement the parameters:
$$
{\D \over \D x} [ x^a (1-x)^b f(x)] =  x^{a-1} (1-x)^{b-1} \PPPt^(a-1,b-1)(x) W_{(a,b)}^{(a-1,b-1)} \Vectt[f_0,f_1,\dots],
$$
where $W_{(a,b)}^{(a-1,b-1)} $ is zero except for the first sub-diagonal \cite[18.9.16]{DLMF}.

These properties translate to partial derivatives of Jacobi polynomials on the triangle. That is, we have\footnote{We  have similar relationships for ${\partial \over \partial z} := {\partial \over \partial x} -  {\partial \over \partial y}$, but we omit these for brevity as they are not needed.}
	\meeq{
{\partial f \over \partial x} =\PPPt^(a+1,b,c+1)(x,y) D_{x,(a,b,c)}^{(a+1,b,c+1)} \FF, \ccr
{\partial f \over \partial y} = \PPPt^(a,b+1,c+1)(x,y) D_{y,(a,b,c)}^{(a,b+1,c+1)} \FF,
}
where the entries are derived in \cref{Corollary:Differentiation}. Both $D_{x,(a,b,c)}^{(a+1,b,c+1)}$ and $D_{y,(a,b,c)}^{(a,b+1,c+1)}$ are sparse: they are block super-diagonal, and their blocks are upper bi-diagonal and super-diagonal, respectively. Similarly, for weighted differentiation we have
	\meeq{
\ddx[x^a y^b z^c f(x,y)] = x^{a-1} y^{b} z^{c-1} \PPPt^(a-1,b,c-1)(x,y) W_{x,(a,b,c)}^{(a-1,b,c-1)}\FF,  \ccr
\ddy[x^a y^b z^c f(x,y)] = x^{a} y^{b-1} z^{c-1}  \PPPt^(a,b-1,c-1)(x,y) W_{y,(a,b,c)}^{(a,b-1,c-1)} \FF,
}
where the entries are derived in \cref{Corollary:WeightedDifferentiation}. Both $W_{x,(a,b,c)}^{(a-1,b,c-1)}$ and $W_{y,(a,b,c)}^{(a,b-1,c-1)}$ are also sparse matrices as they are block sub-diagonal, and their blocks are lower bi-diagonal and sub-diagonal, respectively. 

Combining differentiation and conversion appropriately allows us to represent more complicated differential operators. For example, the Laplacian can be expressed as an operator that takes coefficients in an $\PPP^(0,0,0)$ expansion to coefficients in an $\PPP^(2,2,2)$ expansion as follows:
$$
\Delta_{(0,0,0)}^{(2,2,2)} := S_{(2,1,2)}^{(2,2,2)} S_{(2,0,2)}^{(2,1,2)}D_{x,(1,0,1)}^{(2,0,2)} D_{x,(0,0,0)}^{(1,0,1)} + S_{(1,2,2)}^{(2,2,2)} S_{(0,2,2)}^{(1,2,2)}D_{y,(0,1,1)}^{(0,2,2)} D_{y,(0,0,0)}^{(0,1,1)}
$$
A simple calculation determines that this is also a sparse operator with block-band\-widths $(2,4)$ and blocks with bandwidths $(0,4)$, see the left figure in \cref{fig:Spy}.

Similarly, we can express the Laplacian as an operator from coefficients in an $x y (1-x-y) \PPP^(1,1,1)(x,y)$ expansion to coefficients in an $\PPP^(1,1,1)(x,y)$ expansion by using weighted derivatives and lowering operators:
\begin{equation} 
\Delta_W :=  S_{(1,0,1)}^{(1,1,1)}D_{x,(0,0,0)}^{(1,0,1)} L_{(0,1,0)}^{(0,0,0)} W_{x,(1,1,1)}^{(0,1,0)} + S_{(0,1,1)}^{(1,1,1)}D_{y,(0,0,0)}^{(0,1,1)} L_{(1,0,0)}^{(0,0,0)} W_{y,(1,1,1)}^{(1,0,0)}.
\label{eq:LapW}
\end{equation} 
This is  a sparse operator with block-bandwidths $(1,2)$ and blocks with bandwidths $(2,2)$, see the middle figure in \cref{fig:Spy}.

Finally, variable coefficients can be constructed by combining lowering, conversion, and Jacobi operators. For example, the variable Helmholtz operator
$
\Delta + v(x,y)
$
can be represented as
$$
\Delta_W + S_{(0,0,0)}^{(1,1,1)} v(J_x^\top, J_y^\top) L_{(1,1,1)}^{(0,0,0)}
$$
This still leads to a sparse discretisation, where the block bandwidths depend on the degree of $v$, see the right figure in \cref{fig:Spy} for an example with $v(x,y) + x y^2$.

\begin{figure}[tb]
\begin{center}
\begin{tabular}{ccc}
\includegraphics[width=0.3\textwidth]{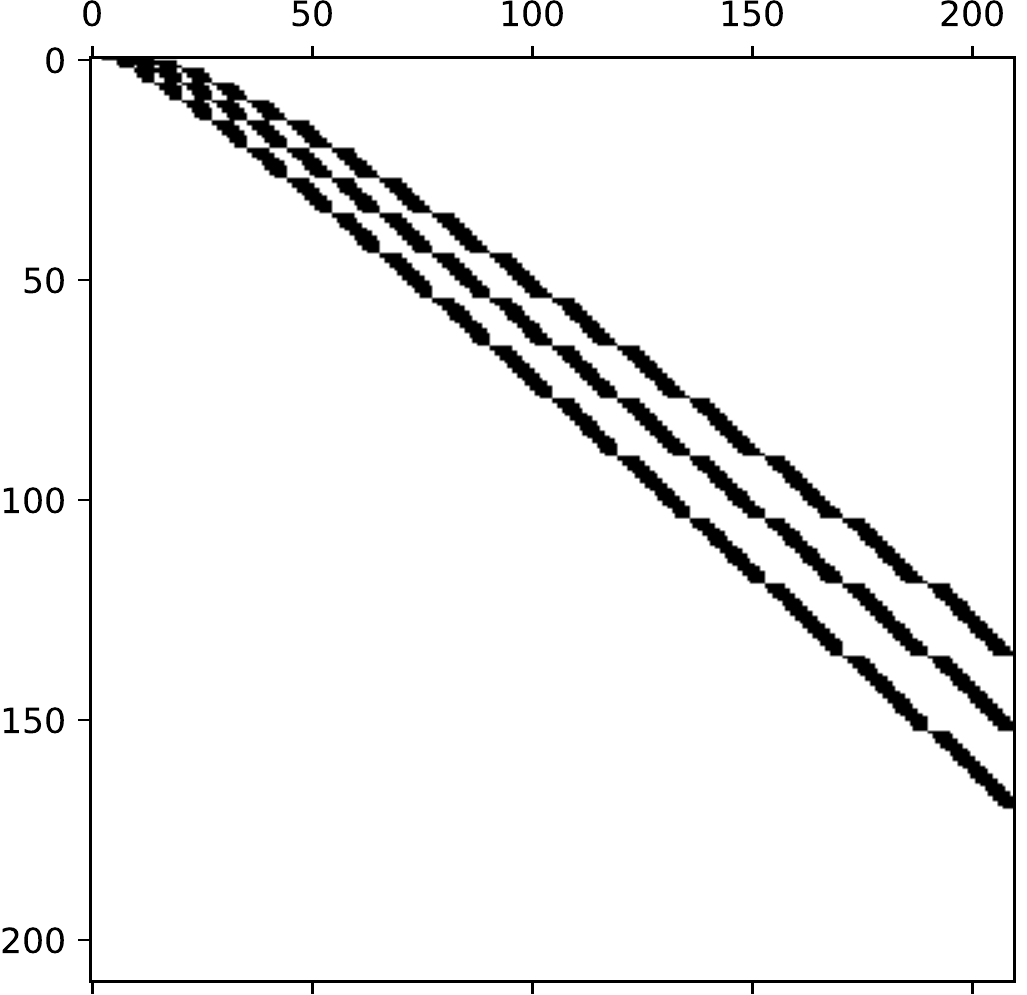}&
\includegraphics[width=0.3\textwidth]{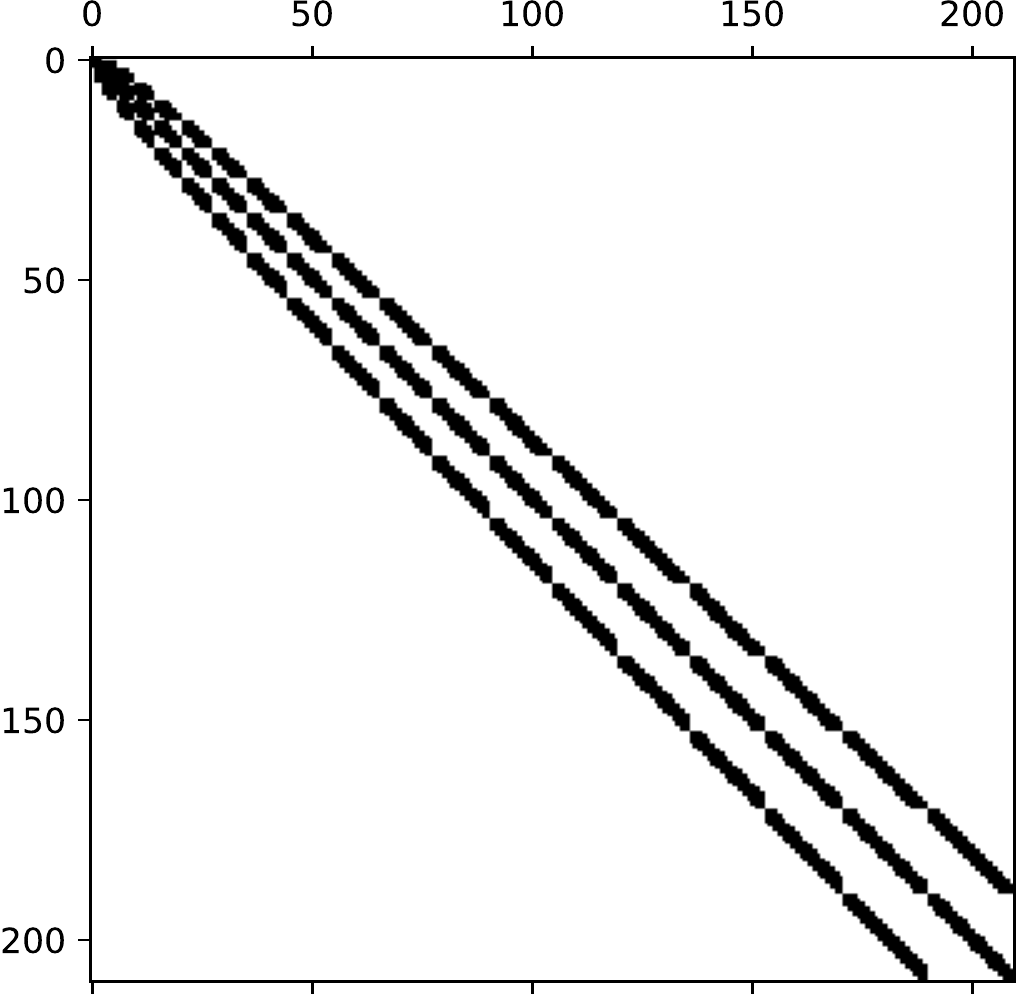}&
\includegraphics[width=0.3\textwidth]{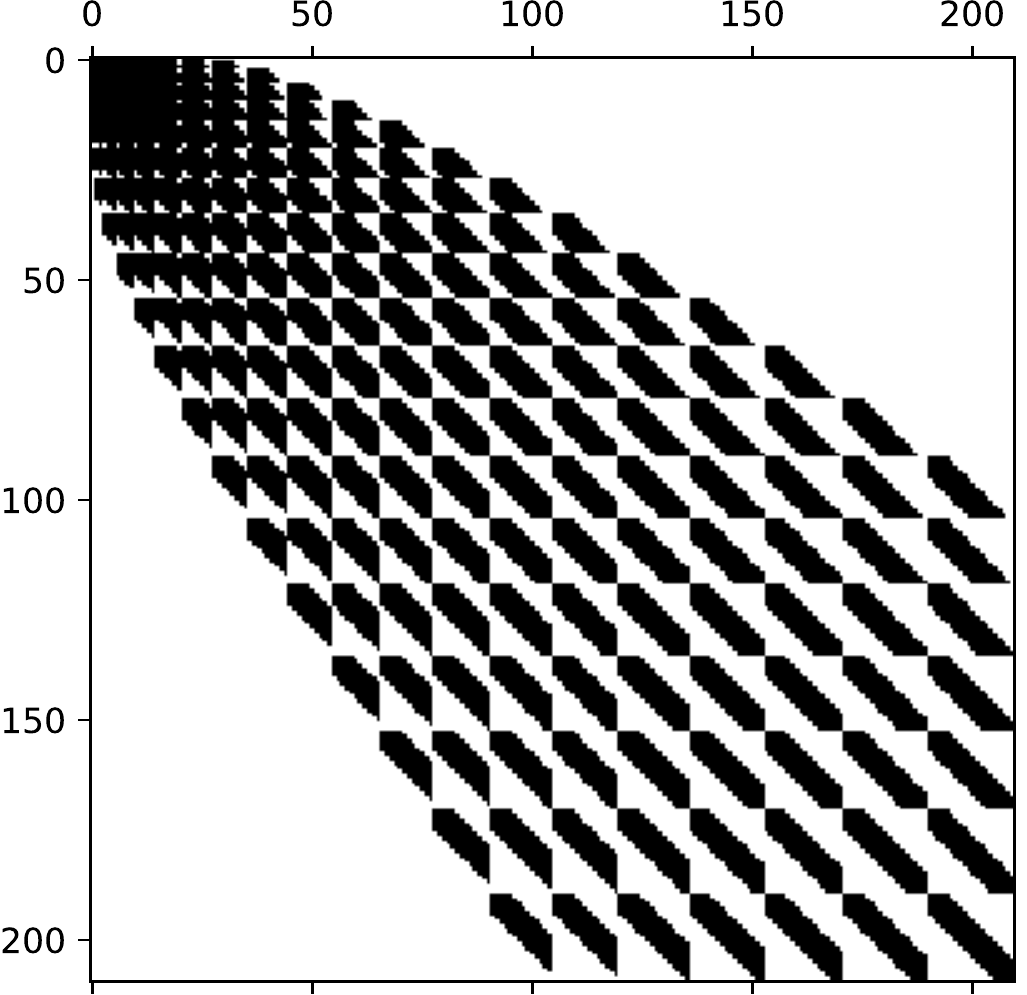}
\end{tabular}
\caption{The sparsity pattern of the Laplacian $\Delta_{(0,0,0)}^{(2,2,2)}$ (left), the weighted Laplacian $\Delta_W$ (middle), and the weighted variable coefficient Helmoltz operator   $\Delta_W + S_{(0,0,0)}^{(1,1,1)} v(J_x^\top, J_y^\top) L_{(1,1,1)}^{(0,0,0)}$ with $V(x,y) = x y^2$ (right).}
\label{fig:Spy}
\end{center}
\end{figure}

\section{Solving linear PDEs with zero Dirichlet conditions}\label{Section:solvingpdes}
We now use the systematic approach to constructing sparse operators to solve PDEs.  We construct the operators using {BlockBandedMatrices.jl} \cite{BlockBandedMatrices}, which enables fast multiplication of block-banded matrices with banded blocks by building on BLAS. We then convert the representation to a SuiteSparse compatible sparse matrix format, for matrix factorization and solves. 

\subsection{Zero Dirichlet conditions}
To solve PDEs with vanishing Dirichlet conditions, we use the weighted basis
$$x y (1-x-y) \PPP^(1,1,1)(x,y).$$
For higher order equations like the Biharmonic equation we consider vanishing Dirichlet and Neumann conditions using the weighted basis
$$x^2 y^2 (1-x-y)^2 \PPP^(2,2,2)(x,y).$$
%


\subsubsection{Example 1: Poisson equation}
Consider Poisson's equation on a triangle with zero Dirichlet conditions, i.e., 
$$
u_{xx} + u_{yy} = f(x,y), \quad (x,y) \in T, \qquad u|_{\delta T} = 0.
$$
We reduce this equation to a truncation of
$$
\Delta_w  \vc u = \vc f
$$
where the coefficients of $f(x,y) = \PPP^(1,1,1)(x,y)^\top \vc f$ are determined using \cite{Slevinsky_17_03} as implemented in \cite{FastTransforms}. 
In~\cref{fig:Poisson} (left), we depict the solution for a specific choice of $f(x,y)$. In Figure~\ref{fig:Poisson} (right), we show the construction time\footnote{Timings are performed on an iMac 2017 with 3.8 GHz Intel Core i5, using Julia v1.0 compiled with MKL BLAS.  Note that the default OpenBLAS is slower for banded matrix operations.} of the matrix (using BlockBandedMatrices.jl), execution time for an LU factorization, and  thesolve time (using SuiteSparse via Julia's SparseArrays.jl). We observe that the construction requires an optimal $\mathcal{O}(N^2)$ operations while the factorization and solution time are observed to cost an almost-optimal $\mathcal{O}(N^3)$ operations. The same complexities are observed for PDEs on rectangles using a Chebyshev-based spectral method~\cite{Julien_09_01}. 

In~\cref{fig:PoissonConvergence}, we show  the norms of each block of calculated coefficients of the approximation  for four right-hand sides with $N = 999$. Note that the rate of decay in the coefficients is a proxy for the rate of convergence of the computed solution.  The behavior of the right-hand side at the corners has an impact on the convergence rate; in particular, if $f$ and its derivatives vanish at the corners then we observe  faster convergence of the solution.  The behavior at the origin is particularly important as the Laplacian of the basis $x y (1-x-y) P_{n,k}^{(1,1,1)}(x,y)$ always vanishes at the origin. While we only observe algebraic convergence for the first three examples (that is, we do not achieve spectral convergence as $N \rightarrow \infty$), the rate of convergence is fairly fast, achieving machine precision accuracy when $f(x,y)$ vanishes at the origin  with around 10,000 unknowns.  Furthermore, the last example shows spectral convergence for a Gaussian bump function, which up to machine precision vanishes to all orders at the corners. Finally, over-resolving the solution does not result in the error plateauxing at machine precision, which means our discretization slightly improves the regularity of the data, similar to the ODE case in \cite{Olver_13_01}. 

\begin{figure}[tb]
\begin{center}
\begin{tabular}{cc}
\includegraphics[width=0.45\textwidth]{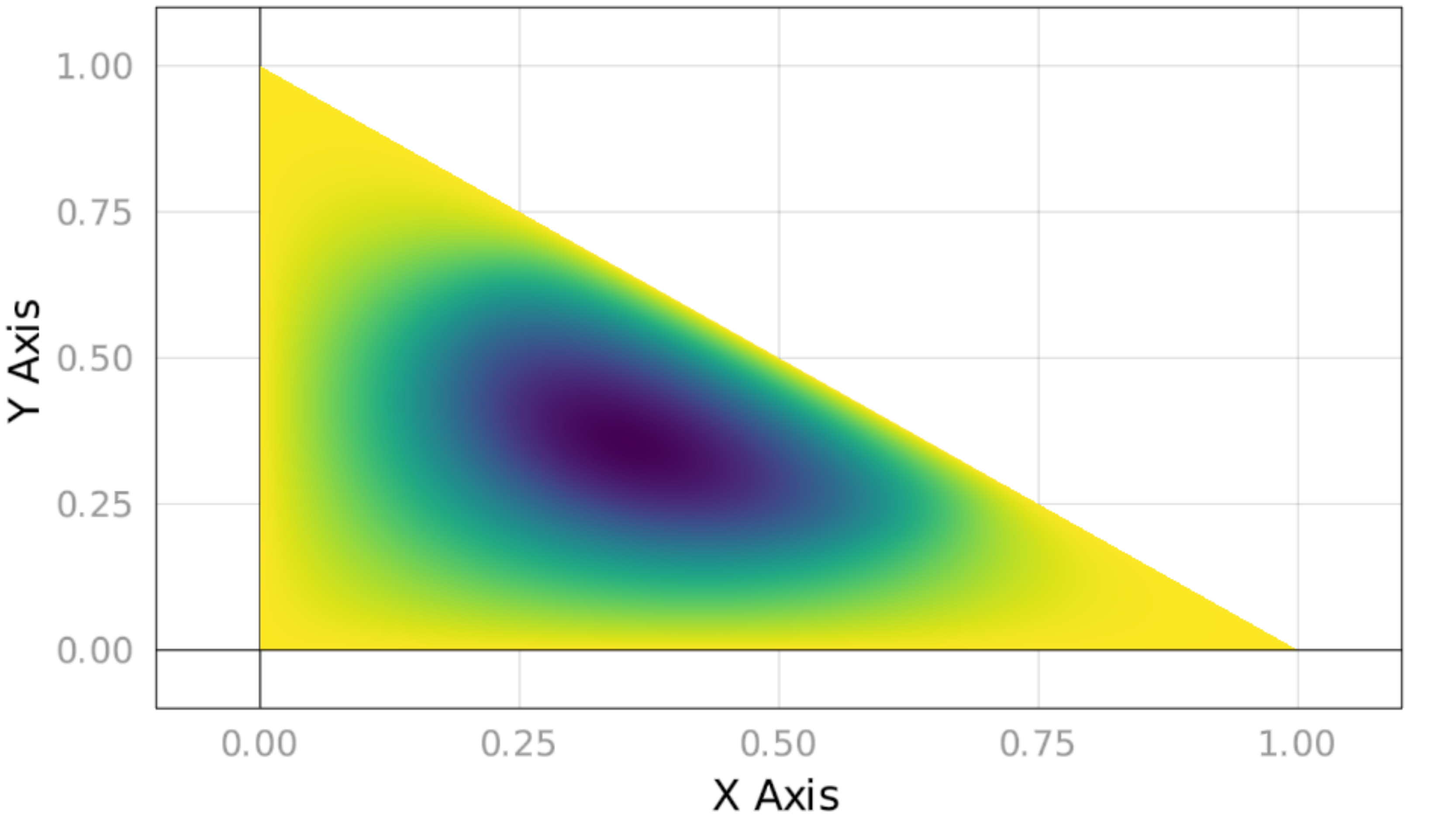}&
\includegraphics[width=0.45\textwidth]{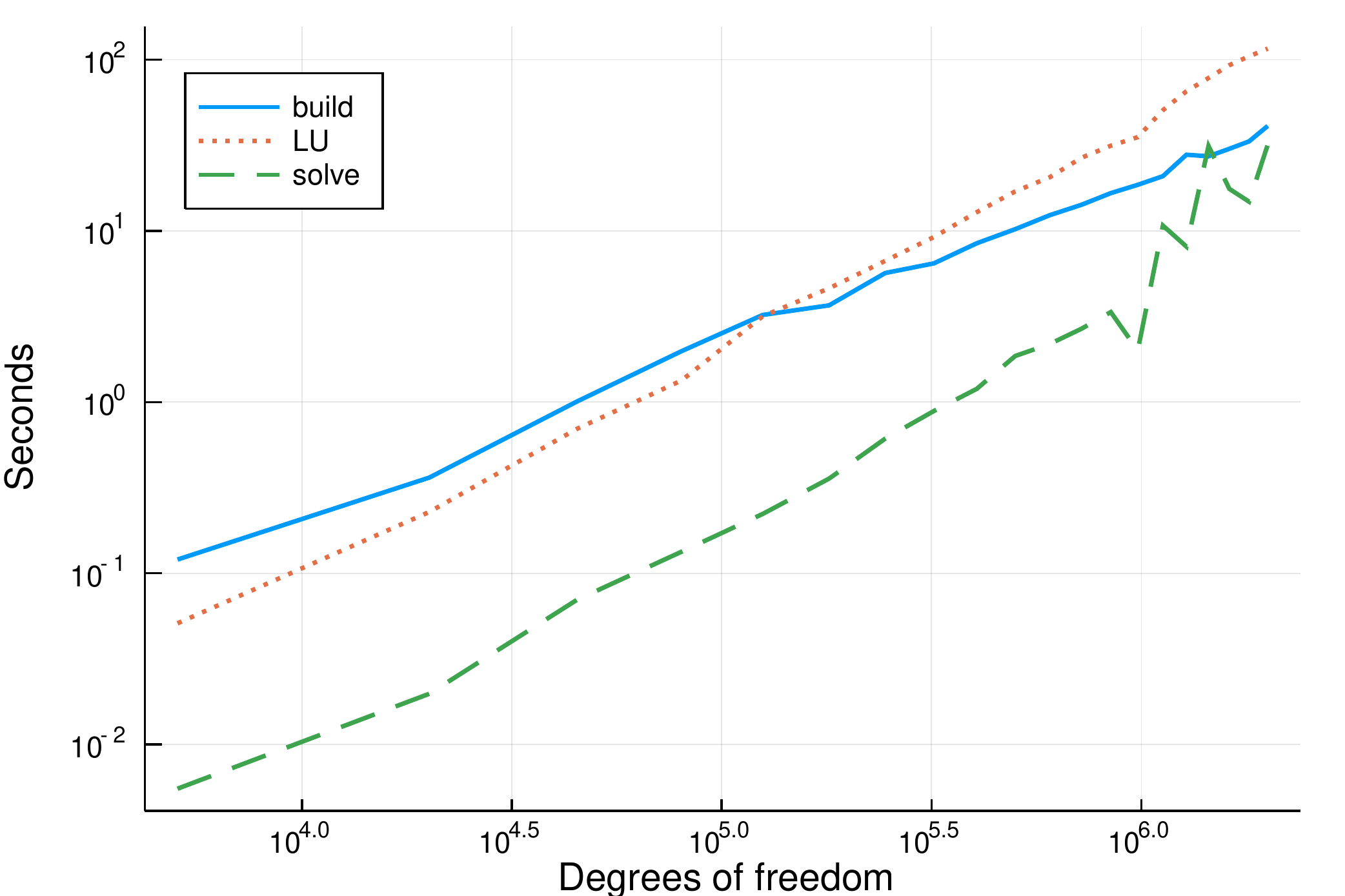}
\end{tabular}
\caption{Left: The computed solution to $\Delta u = f$ with zero boundary conditions and $f(x,y) = 1 + \erf(5 (1-10((x-1/2)^2 + (y-1/2)^2))$. Right: the time in seconds to build the discretization, calculate its LU factorization using SuiteSparse, and solve the system.}
\label{fig:Poisson}
\end{center}
\end{figure}

\begin{figure}[tb]
\begin{center}
\begin{tabular}{cc}
\includegraphics[width=0.7\textwidth]{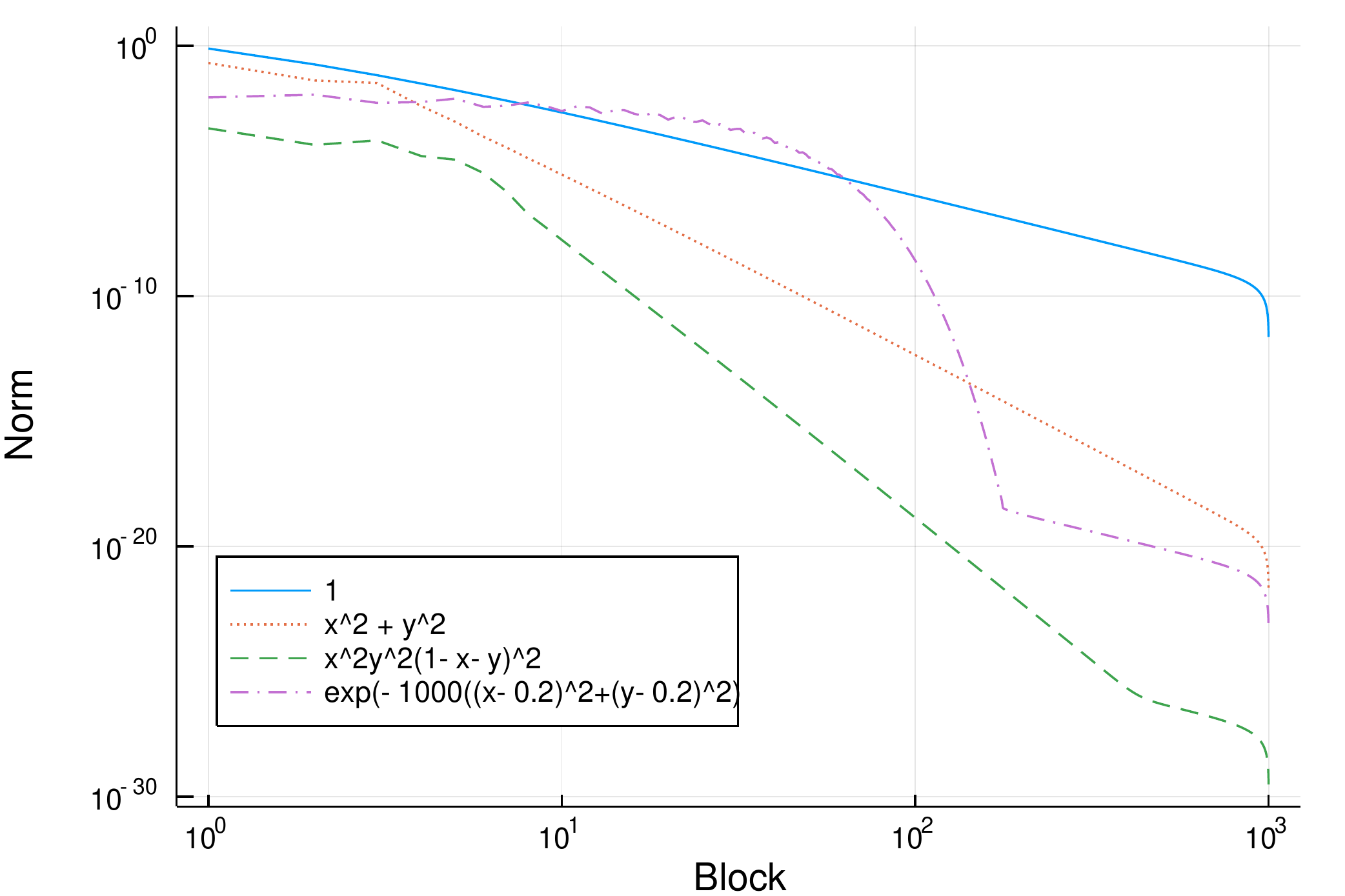}
\end{tabular}
\caption{The norm of the blocks of the calculated coefficients for four functions, for $N = 1000$, i.e., with  $~500k$ degrees of freedom. The rate in decay serves as a proxy for the error in the computed solution. We see the first three cases we have algebraic convergence, with the convergence rate improving when the function vanishes to higher order at the corners. The last example shows spectral convergence for a Gaussian bump. 
}
\label{fig:PoissonConvergence}
\end{center}
\end{figure}

\subsubsection{Example 2: Variable coefficient Helmholtz equation with forcing terms}\label{ex:Helmholtz1} 
Now, consider a variable coefficient Helmholtz equation with zero Dirichlet conditions, i.e.,
$$
u_{xx} + u_{yy} + k^2 v(x,y) u = x y \E^x, \quad (x,y) \in T, \qquad u|_{\delta T} = 0. 
$$
We first approximate $v(x,y)$ by a polynomial \cite{FastTransforms} and then use the operator-valued Clenshaw's algorithm to construct $v(J_x^\top, J_y^\top)$. We obtain the following discretization:
$$
\Delta_W + k^2.  S_{(0,0,0)}^{(1,1,1)}  v(J_x^\top, J_y^\top)  L_{(1,1,1)}^{(0,0,0)},
$$
where $J_x^\top$ and $J_y^\top$ are the Jacobi operators for $\PPP^(1,1,1)$ and
\[
L_{(1,1,1)}^{(0,0,0)}=L_{(0,0,1)}^{(0,0,0)} L_{(0,1,1)}^{(0,0,1)} L_{(1,1,1)}^{(0,1,1)}, \qquad
S_{(0,0,0)}^{(1,1,1)}=S_{(0,1,1)}^{(1,1,1)} S_{(0,0,1)}^{(0,1,1)} S_{(0,0,1)}^{(0,0,0)}.
\]

In~\cref{fig:Helmholtz} we depict the solution for $k = 100$ and plot the timings for construction, factorization, and solution for $k$ between 100 and 300, using polynomials of degree $2k$.
 The build time depends only on the discretization size, so we observe an $\mathcal{O}(k^2)$ cost. 

\begin{figure}[tb]
\begin{center}
\begin{tabular}{cc}
\includegraphics[width=0.45\textwidth]{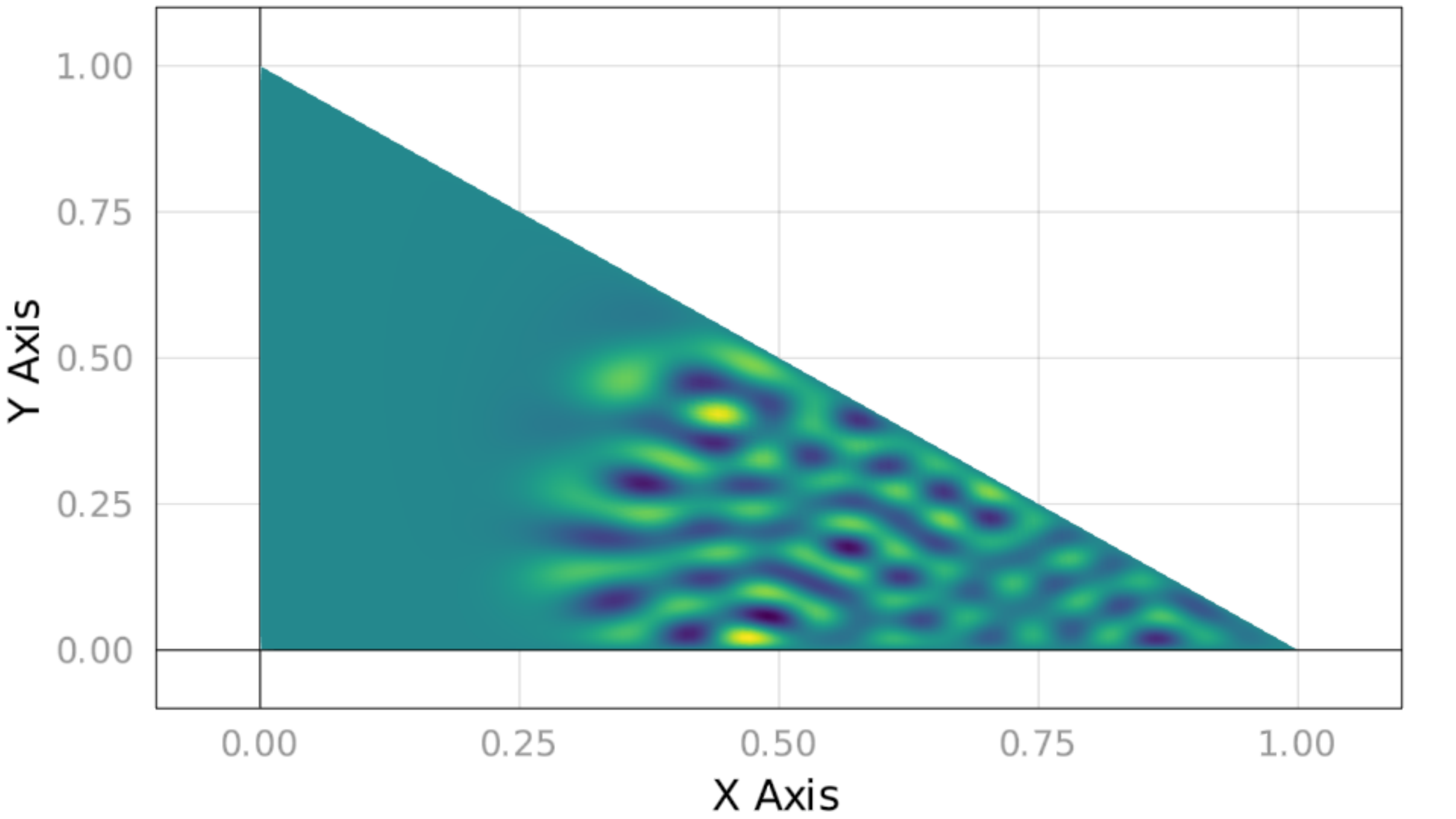}&
\includegraphics[width=0.45\textwidth]{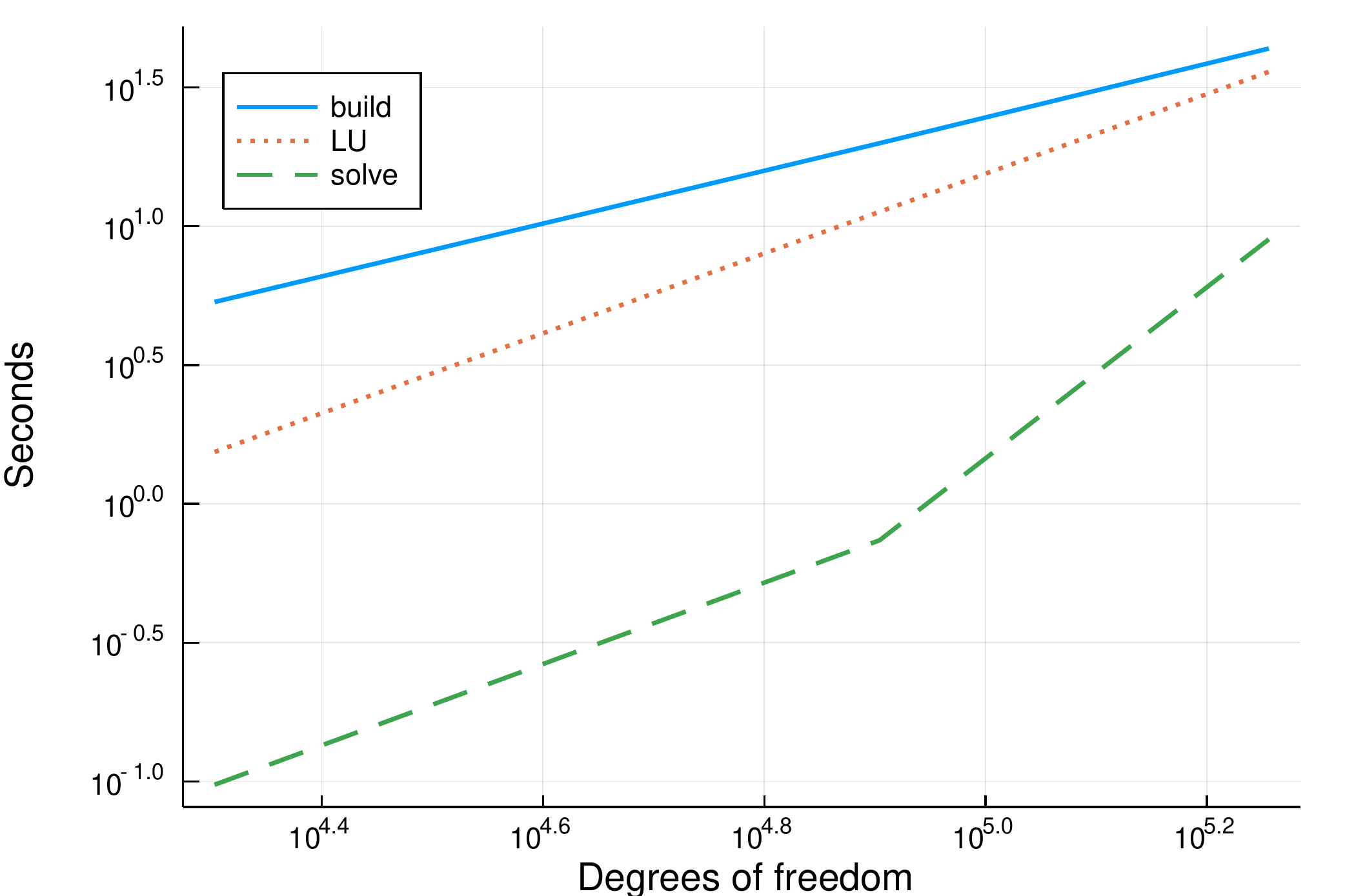}
\end{tabular}
\caption{Left: The computed solution to $(\Delta + k^2 v(x,y)) u = xy\E^x$ with zero Dirichlet conditions and $v(x,y) = 1-(3(x-1)^2 + 5 y^2)$. Right: The execution time to build the discretization, calculate its LU factorization using SuiteSparse, and solve the linear system.}
\label{fig:Helmholtz}
\end{center}
\end{figure}

\subsubsection{Example 3: The biharmonic equation}
The same technique for constructing a sparse representation of the Laplacian $\Delta$ translates to the Biharmonic operator $\Delta^2$, though now we must use a basis that satisfies both zero Dirichlet and Neumann conditions. We can represent the Laplacian as a map from coefficients in an $x^2 y^2 (1-x-y)^2 \PPP^(2,2,2)(x,y)$ expansion to coefficients in an $\PPP^(0,0,0)$ expansion by using weighted differentiation and lowering operators:
$$
\Delta_{W^2} :=  L_{(0,1,0)}^{(0,0,0)} W_{x,(1,1,1)}^{(0,1,0)} L_{(1,2,1)}^{(1,1,1)} W_{x,(2,2,2)}^{(1,2,1)} +  L_{(1,0,0)}^{(0,0,0)} W_{y,(1,1,1)}^{(1,0,0)} L_{(2,1,1)}^{(1,1,1)} W_{y,(2,2,2)}^{(2,1,1)}.
$$
Hence, the biharmonic operator can be sparsely represented as a map from coefficients in an $x^2 y^2 (1-x-y)^2 \PPP^(2,2,2)(x,y)$ expansion to coefficients in an $\PPP^(2,2,2)$ expansion. This is simply given by $\Delta_{(0,0,0)}^{(2,2,2)}  \Delta_{W^2}$.

In~\cref{fig:Biharmonic} we depict a solution to the biharmonic equation and show that the build time grows linearly with respect to the number of degrees of freedom employed to discretize the solution. 
\begin{figure}[tb]
\begin{center}
\begin{tabular}{cc}
\includegraphics[width=0.45\textwidth]{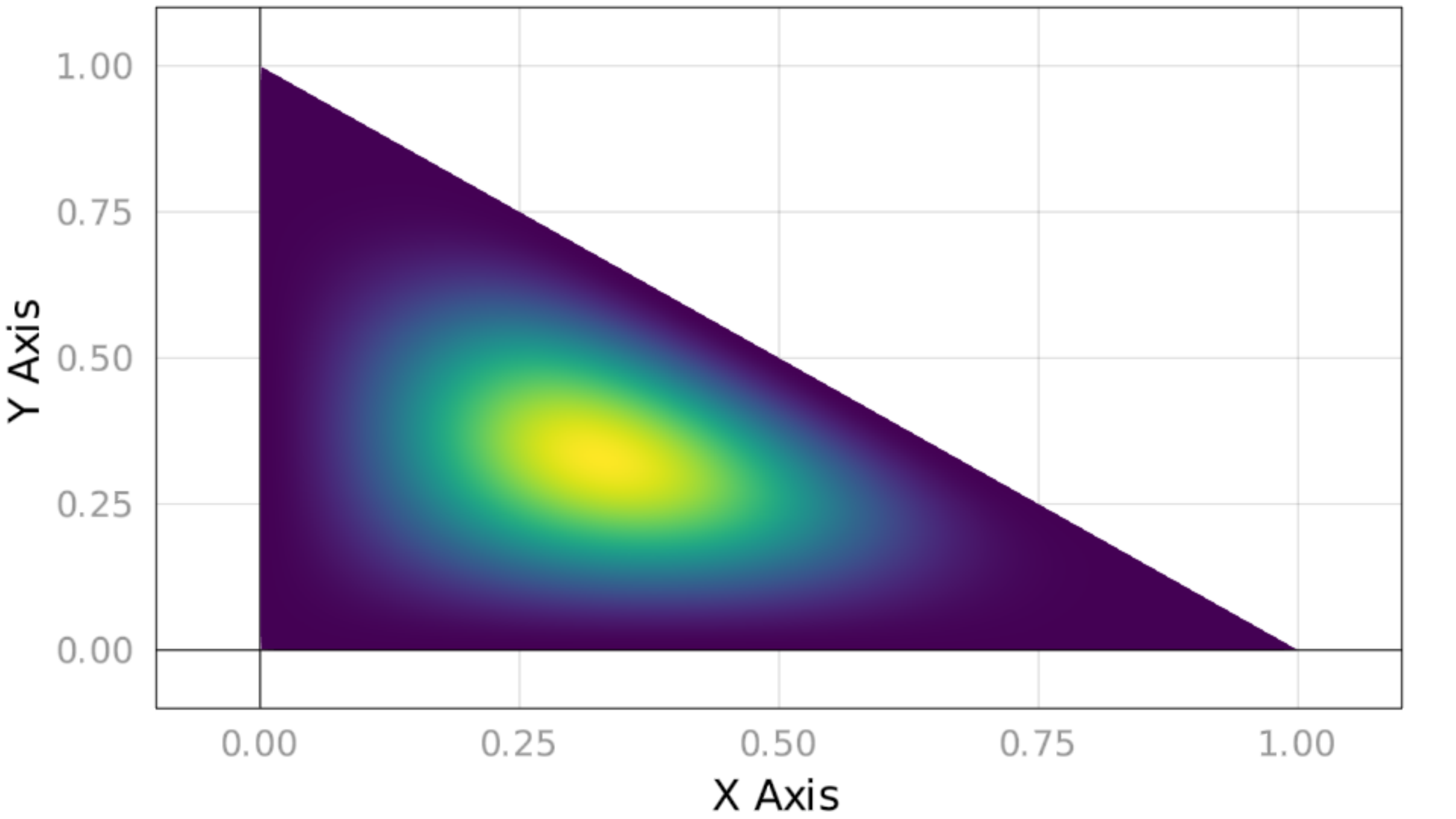}&
\includegraphics[width=0.45\textwidth]{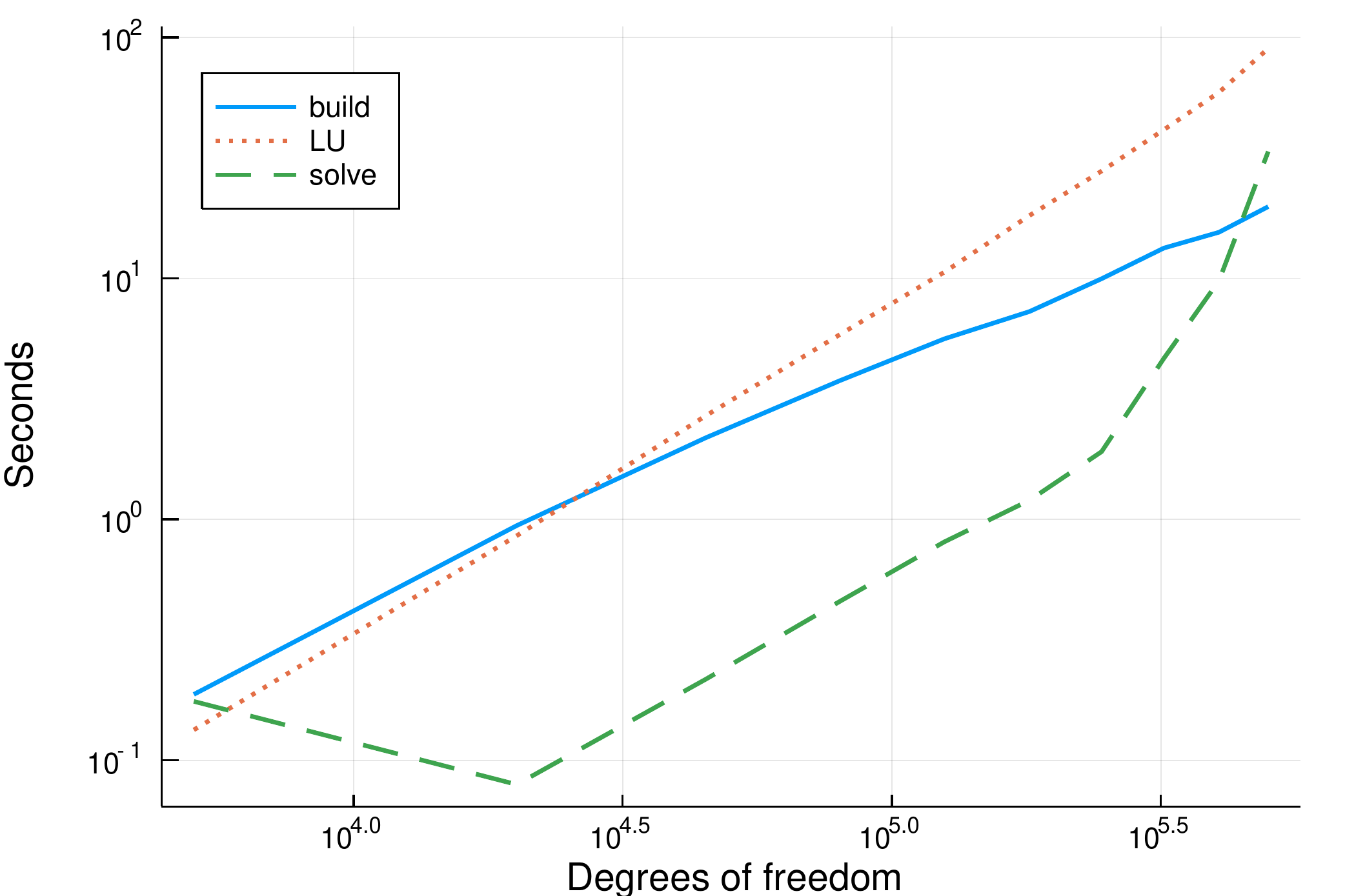}
\end{tabular}
\caption{Left: The  solution to $\Delta^2 u = f$ with zero Dirichlet conditions and $f(x,y) = 1 + \erf(5 (1-10((x-1/2)^2 + (y-1/2)^2))$. Right: The execution time to build the discretization, calculate its LU factorization using SuiteSparse, and solve the linear system.}
\label{fig:Biharmonic}
\end{center}
\end{figure}

\section{Nonzero Dirichlet conditions} \label{Section:dirichlet}
To handle general nonzero Dirichlet boundary conditions, we wish to construct restriction operators that are sparse operators.  To facilitate this, we use a basis where most elements of the basis vanish at the boundary. We take the weighted basis $x^a y^b(1-x-y)^c \PPP^(a,b,c)$, where $a, b,c$ are integers, and augment it with additional polynomials so that the basis can represent all bivariate polynomials.  This is essentially the same procedure as in~\cite{Karniadakis_Sherwin_13}, but we do it in a way that preserves the sparsity of the restriction operators. \Cref{Appendix:DirichletBasis} gives the definition of $Q_{n,k}^{(a,b,c)}(x,y)$, where $a,b,c \in \set{0,1}$, which is the basis we use to construct sparse discretizations. Here, most of $Q_{n,k}^{(1,b,c)}(0,y)$, most of $Q_{n,k}^{(a,1,c)}(x,0)$, and most of $Q_{n,k}^{(a,b,1)}(x,1-x)$ vanish. 

\begin{remark} 
Formally, $Q_{n,k}^{(a,b,c)}(x,y)$ can be thought of as $P_{n,k}^{(-a,-b,-c)}(x,y)$, which is made precise in \cite{Xu_TA} during the construction of the polynomials $J_{n,k}^{(a,b,c)}(x,y)$. However, the construction in \cite{Xu_TA} is normalized in a way that leads to underflow in double precision computing and we find it simpler to define our own basis $Q_{n,k}^{(a,b,c)}(x,y)$ in an ad hoc way.
\end{remark} 

\subsection{Derivative and conversion operators}
Partial derivatives and conversion operators $\vc Q^{(a,b,c)}(x,y)$ are similar to those derived for $\vc P^{(a,b,c)}(x,y)$.  Using the formulas in \cref{Corollary:OneEdgeConversion}, we can construct conversion operators that convert from one-edge bases to $\vc P^{(0,0,0)}$:
\meeq{
f(x,y) = \vc Q^{(1,0,0)}(x,y)^\top \vc f = \vc P^{(0,0,0)}(x,y)^\top \tilde S_{(1,0,0)}^{(0,0,0)} \vc f, \ccr
f(x,y) = \vc Q^{(0,1,0)}(x,y)^\top \vc f = \vc P^{(0,0,0)}(x,y)^\top \tilde S_{(0,1,0)}^{(0,0,0)} \vc f, \ccr
f(x,y) = \vc Q^{(0,0,1)}(x,y)^\top \vc f = \vc P^{(0,0,0)}(x,y)^\top \tilde S_{(0,0,1)}^{(0,0,0)} \vc f. 
}
Note that each operator is block upper bi-diagonal, with diagonal or upper bi-diagonal blocks. Similarly, \cref{Corollary:TwoEdgeConversion} derives sparse conversion operators from two-edge bases to one edge bases, which we denote by $\tilde S_{(1,1,0)}^{(1,0,0)}$, $\tilde S_{(1,1,0)}^{(0,1,0)}$, $\tilde S_{(1,0,1)}^{(1,0,0)}$, etc.  Finally, \cref{Corollary:ThreeEdgeConversion} derives sparse conversion operators from the three-edge basis to any of the two-edge bases, which we denote by $\tilde S_{(1,1,1)}^{(1,1,0)}$, $\tilde S_{(1,1,1)}^{(1,0,1)}$, and $\tilde S_{(1,1,1)}^{(0,1,1)}$. We can clearly compose these operators together to convert from, say, $\vc Q^{(1,1,1)}$ to $\vc P^{(0,0,0)}$. For this purpose, we can define
$$
\tilde S_{(1,1,1)}^{(0,0,0)} := \tilde S_{(1,0,0)}^{(0,0,0)} \tilde S_{(1,1,0)}^{(1,0,0)}\tilde S_{(1,1,1)}^{(1,1,0)}.
$$
There are always several paths through the parameter space to convert one basis into another; however, any path that is chosen results in the same final conversion operator.  

The same principle is true for derivatives, though for our purposes it suffices to restrict our attention to derivatives of the  two-edge bases. \Cref{Corollary:TwoEdgeDerivative} gives us the entries for sparse (block-superdiagonal with at most bidiagonal blocks) operators  that satisfy:
	\meeq{
{\partial f \over \partial x} =  {\partial \over \partial x} \vc Q^{(1,0,1)}(x,y)^\top \vc f  =\PPPt^(0,0,0)(x,y) \tilde D_{x,(1,0,1)}^{(0,0,0)} \FF, \ccr
{\partial f \over \partial y} =  {\partial \over \partial y} \vc Q^{(0,1,1)}(x,y)^\top \vc f  =\PPPt^(0,0,0)(x,y) \tilde D_{y,(0,1,1)}^{(0,0,0)} \FF.
}

General partial derivative operators can be constructed by combining conversion and derivative operators. For example,  we can successfully construct the Laplacian from $\vc Q^{(1,1,1)}$ to $\vc P^{(1,1,1)}$ as
$$
\tilde \Delta := S_{(1,0,1)}^{(1,1,1)} D_{x,(0,0,0)}^{(1,0,1)} \tilde D_{x,(1,0,1)}^{(0,0,0)}\tilde S_{(1,1,1)}^{(1,0,1)} +  S_{(0,1,1)}^{(1,1,1)} D_{y,(0,0,0)}^{(0,1,1)} \tilde D_{y,(0,1,1)}^{(0,0,0)}\tilde S_{(1,1,1)}^{(0,1,1)}.
$$
This is a sparse operator with block-bandwidths and sub-blockbandwidths equal to $(1,4)$.

\subsection{Restriction operators} The definitions of $Q_{n,k}^{(1,0,0)}(x,y)$,  $Q_{n,k}^{(0,1,0)}(x,y)$, and  $Q_{n,k}^{(0,0,1)}(x,y)$ each has a simple restriction formula to one of the three edges of the triangle:
\meeq{
	Q_{n,n}^{(1,0,0)}(0,y) = \tilde P_{n}(y) \qqand Q_{n,k}^{(1,0,0)}(0,y) = 0 \qfor k=0,\ldots,n-1, \ccr
	Q_{n,0}^{(0,1,0)}(x,0) = \tilde P_{n}(x) \qqand Q_{n,k}^{(0,1,0)}(x,0) = 0 \qfor k=1,\ldots,n, \ccr
	Q_{n,0}^{(0,0,1)}(x,1-x) = \tilde P_{n}(x) \qqand Q_{n,k}^{(0,0,1)}(x,1-x) = 0 \qfor k=1,\ldots,n. 
	}
In other words, the restriction operator from expansion in $Q_{n,k}^{(1,0,0)}(x,y)$ to Legendre expansion on the edge from $(0,0)$ to $(0,1)$ is a block-banded operator, where the blocks themselves are very sparse: each block has precisely one nonzero entry. Similarly, the other two bases give restriction operators to the other edges.  We denote these restriction operators as $R_x$, $R_y$, and $R_z$, respectively. They are given by 

\meeq{
	f(0,y) = {\vc Q}^{(1,0,0)}({0,y})^\top \vc f =   {\vc P}(y)^\top R_x \vc f, \ccr
	f(x,0) =  {\vc Q}^{(0,1,0)}({0,y})^\top \vc f =   {\vc P}(x)^\top R_y \vc f,  \ccr
	f(x,1-x) =  {\vc Q}^{(0,0,1)}({x,1-x})^\top \vc f =   {\vc P}(x)^\top R_z \vc f,
}
where  $ {\vc P}(x) := \PPPt^(0,0)(x)$ are the shifted Legendre polynomials. 

For the full Dirichlet operator, we need to restrict to all three edges. Thus we can construct restriction operators from $\vc Q^{(1,1,1)}$ to the boundary, where the boundary basis are piecewise mapped Legendre polynomials.  This restriction operator can be calculated by combining conversion and the one-edge restrictions as follows:
$$
R := \sopmatrix{
			R_x \tilde S_{(1,1,0)}^{(1,0,0)} \tilde S_{(1,1,1)}^{(1,1,0)}\\
			R_y \tilde S_{(1,1,0)}^{(0,1,0)} \tilde S_{(1,1,1)}^{(1,1,0)} \\
			R_z \tilde S_{(1,0,1)}^{(0,0,1)} \tilde S_{(1,1,1)}^{(1,0,1)}
			}.
$$
This operator is also sparse as each component is a product of sparse operators.

\subsection{The $\tau$-method}

An issue we must deal with is boundary data with discontinuities at the corners. Consider, for example,  the Laplace equation with Dirchlet conditions:
$$
\Delta u = 0, \qquad u |_{x = 0} = f,\qquad u |_{x = 0} = g, \qqand u |_{z = 0} = h.
$$
 If the boundary data has discontinuities, that is, $f(0,0) \neq g(0,0)$, $f(0,1) \neq h(0,1)$, or $g(1,0) \neq h(1,0)$, then the solution itself will have an arg-like singularity: e.g., near the origin we have the local behaviour
 $$
 u(x,y) \sim (g(0,0) - f(0,0)){2 \over \pi} \arg( x + \I y) + f(0,0).
 $$
Representing $u(x,y)$ by polynomials is therefore limited as they impose continuity. Other PDEs like the Helmholtz equation have similar before when the boundary data has discontinuities. 

To overcome this issue, we adapt the  Lanczos $\tau$-method, see \cite{OrtizTau} for an overview. The Lanczos $\tau$-method is a device to produce invertible systems for polynomial spectral methods by augmenting the equations with polynomial correction terms, 
that also provide error control by measuring the magnitude of the correction term.  In our context we use it to capture discontinuities  by  augmenting the boundary data with corrections of the form
$$
\Delta u = 0, \qquad u |_{x = 0} = f + \tau_1,\qquad u |_{x = 0} = g + \tau_2, \qqand u |_{z = 0} = h.
$$
That is, we add constants $\tau_1$ and $\tau_2$ to our discretisation:
$$
\sopmatrix{1 & 0 & R_x  \tilde S_{(1,1,0)}^{(1,0,0)} \tilde S_{(1,1,1)}^{(1,1,0)}\\ 0 & 1 &R_y \tilde S_{(1,1,0)}^{(0,1,0)} \tilde S_{(1,1,1)}^{(1,1,0)}  \\ 0 & 0 &R_z \tilde S_{(1,0,1)}^{(0,0,1)} \tilde S_{(1,1,1)}^{(1,0,1)}\\ 0 & 0  & \tilde \Delta}  \Vectt[\tau_1, \tau_2, \vc u] = \Vectt[\vc f ,\vc g , \vc h, 0]
$$

Now in our examples below we actually have mathematically continuous boundary data, however, round-off errors mean our boundary data is slightly discontinuous. The $\tau$ correction terms give a way of capturing this discontinuity without destroying the regularity of $u$. When the solution is resolved the $\tau$ terms are therefore negligible, and we can use the approximation of $u$ on its own.

Note that it is possible to add additional $\tau$ correction terms to make the system invertible, but this is a more technical task and hence we prefer to use a QR decomposition to solve the resulting rectangular linear system in a least squares sense. This does incur a substantial penalty, as SuiteSparse's QR decomposition is significantly slower than its LU decomposition.

\subsubsection{Example 4: Laplace's equation} Consider Laplace's equation with prescribed Dirichlet data:
$$
u_{xx} + u_{yy} = 0, \qquad u(0,y) = f(y), \quad  u(x,0) = g(x), \quad u(x,1-x) = h(x).
$$
Expanding $f(x)$, $g(y)$, and $h(x)$ in Legendre series leads to a system of equations satisfied by $u$. That is, 
$$
\sopmatrix{R \\ \tilde \Delta} \vc u = \Vectt[ \vc f, \vc g, \vc h, 0],
$$
where $\vc u$ are the coefficients of $u(x,y)$ in the basis $\vc Q^{(1,1,1)}$, $\vc f$ are the Legendre coefficients of $f(y)$, $\vc g$ are the Legendre coefficients of $g(x)$, and $\vc h$ are the Legendre coefficients of $h(x)$. We augment this system with $\tau$ corrections, which are ultimately ignored in the approximation of the solution. 

In \cref{fig:LaplaceError} we plot the calculated coefficients for $N = 999$ for three choices of boundary data: $\E^x \cos y$, $x^2$, and $x^3 (1-x)^3 (1-y)^3$. The first two examples exhibit algebraic decay, with the rate of decay dictated by the number of derivatives matching at the corners.  The last example has a smooth solution ($\E^x \cos y$ is harmonic) and we see that the algorithm achieves super-algebraic convergence, and is stable for large $N$.  We also note that evaluating the approximation is exact to within an accuracy of $3 \times 10^{-16}$ compared to the exact solution at the arbitrary point $(x,y) = (0.1,0.2)$. 

\begin{figure}[tb]
\begin{center}
\begin{tabular}{cc}
\includegraphics[width=0.45\textwidth]{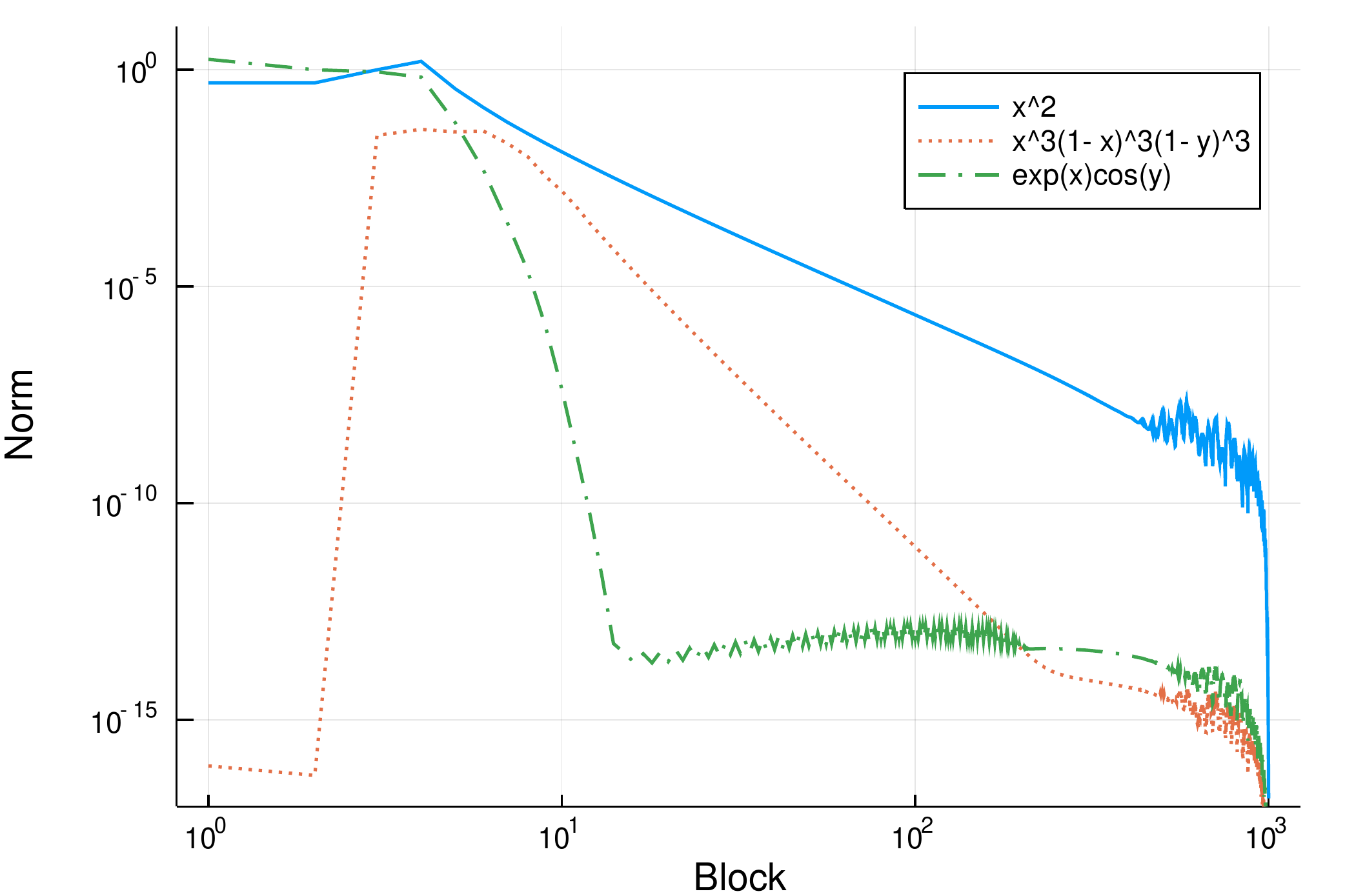}
\end{tabular}
\caption{The norm of the blocks of the calculated coefficients to the solution of $\Delta u = 0$ with specified Dirichlet boundary conditions with $N = 1000$.   The first two examples show algebraic convergence, with faster convergence when there is more continuity at the corners. The third example shows spectral convergence when the solution is smooth. 
}
\label{fig:LaplaceError}
\end{center}
\end{figure}

\subsubsection{Example 5: Transport equation}

\begin{figure}[htbp]
\begin{center}
\begin{tabular}{ccc}
\includegraphics[width=0.3\textwidth]{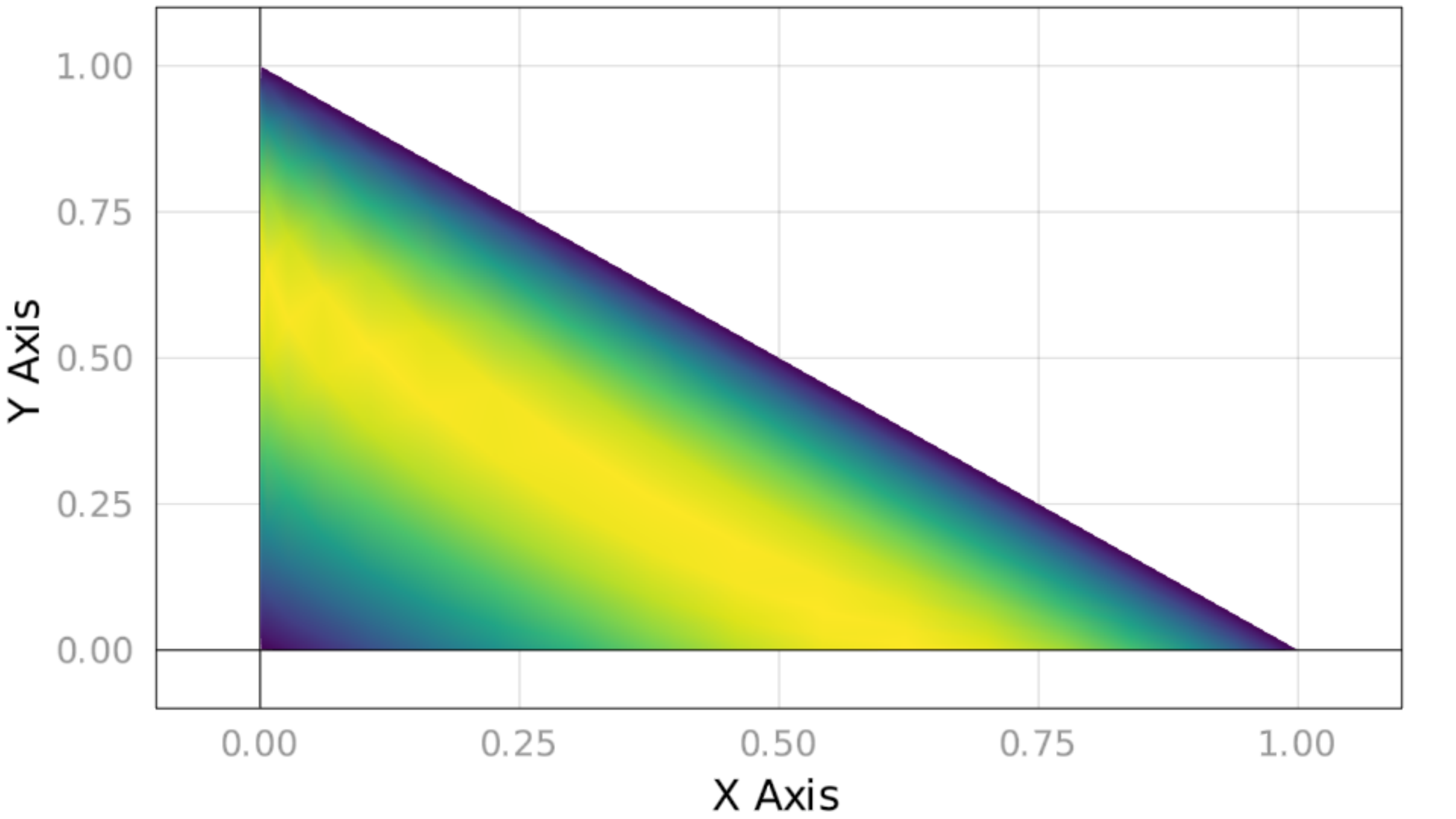}&
\includegraphics[width=0.3\textwidth]{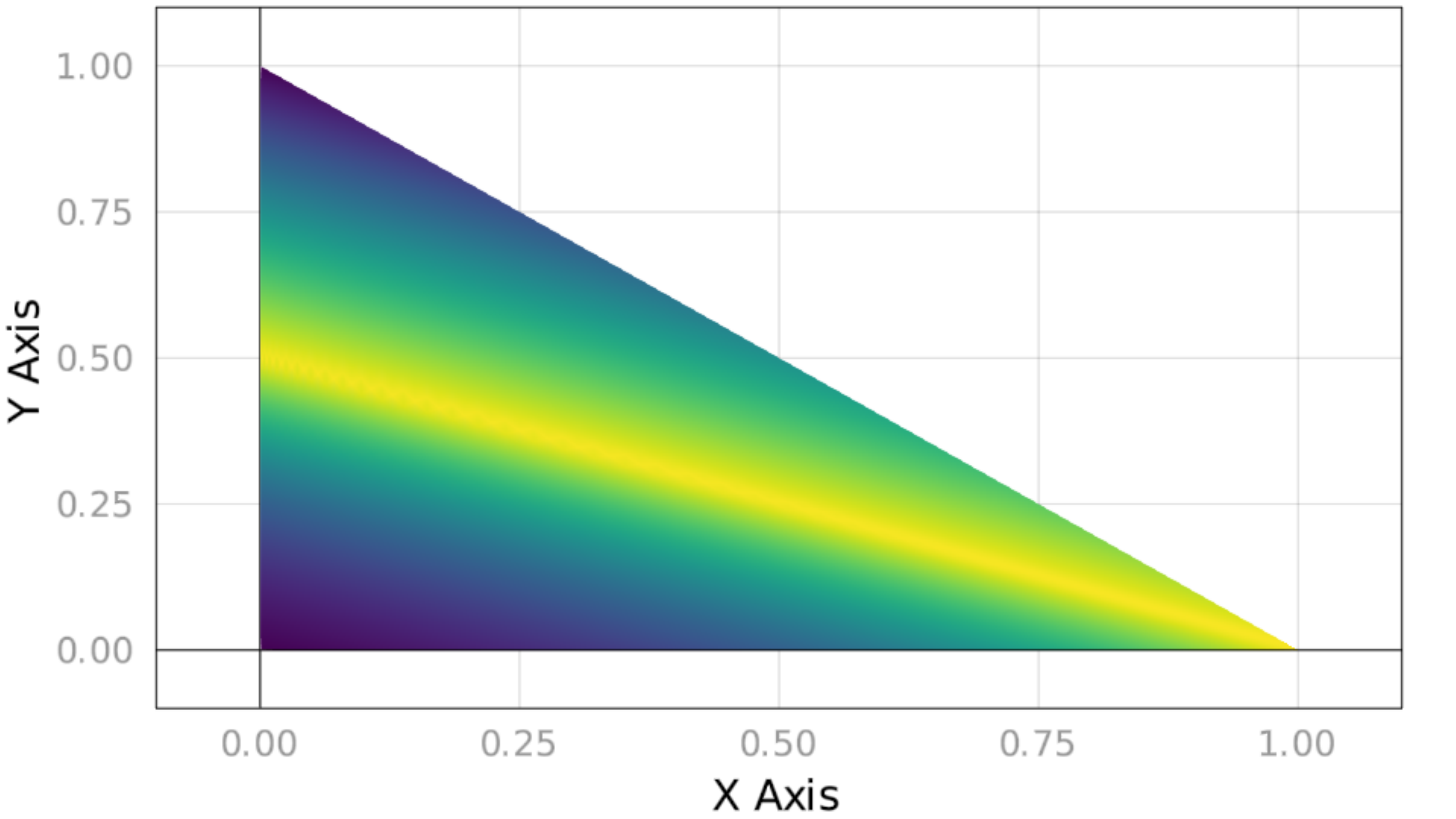}&
\includegraphics[width=0.3\textwidth]{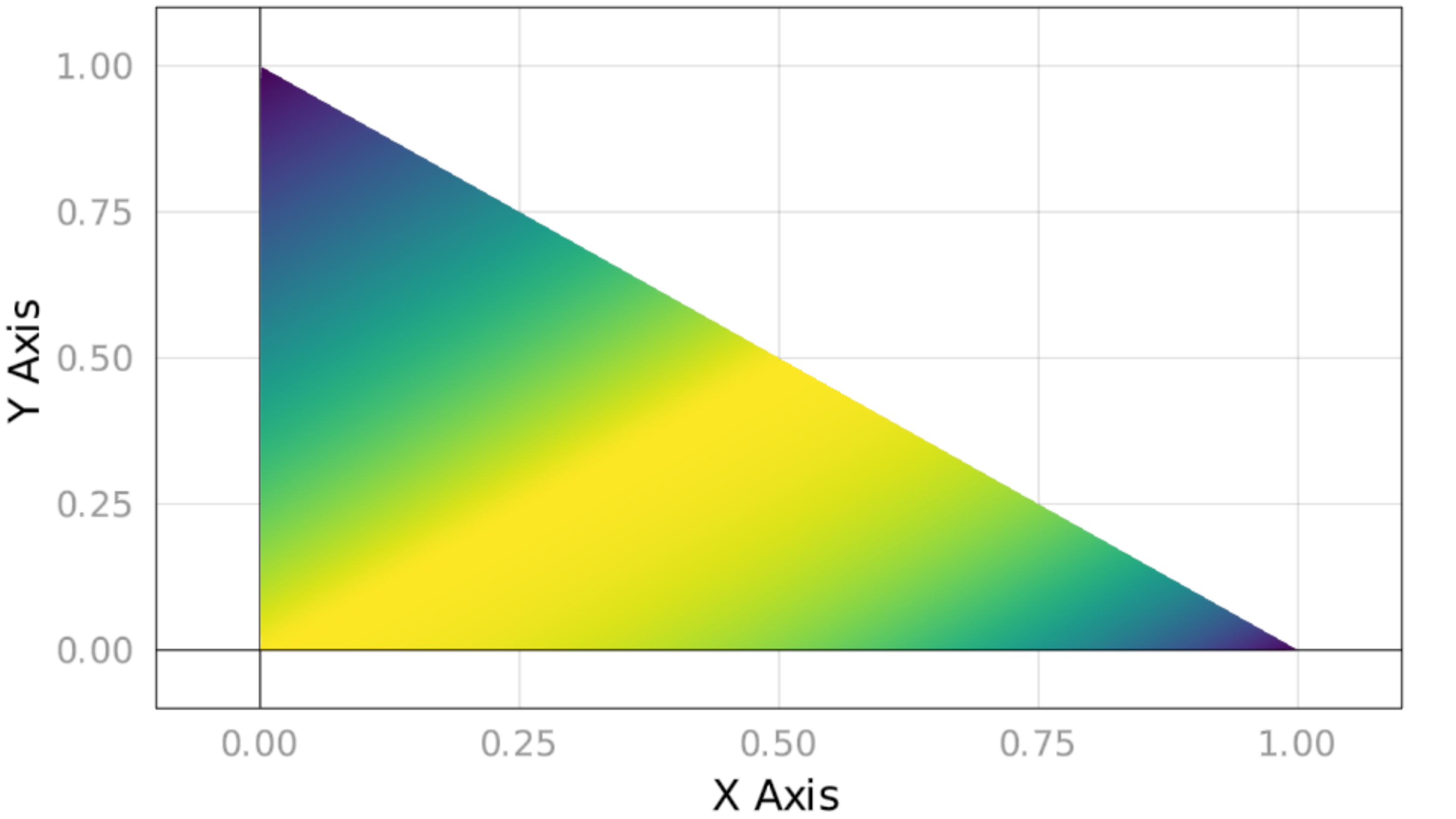}
\end{tabular}
\caption{Left: The  solution to $u_y =  u_x$  using the basis $\vc Q^{(0,1,0)}$ with boundary condition $u(x,0) =x(1-x) \E^x$ imposed on the bottom. Middle: The  solution to $u_y =  2u_x$  using the basis $\vc Q^{(0,1,1)}$ with boundary condition $u(x,0) =x \E^{x-1}$ imposed on the bottom and $u(x,1-x) = x$ on the hypotenuse. Right: The  solution to $u_y =  -u_x$  using the basis $\vc Q^{(1,1,0)}$ with boundary condition $u(x,0) = (1-x) \E^x$ imposed on the bottom and $u(0,y) = 1-y$ on the left.}
\label{fig:Transport}
\end{center}
\end{figure} 

Nothing in this framework depends on the PDE being elliptic. Here, we consider the transport equation given by
$$u_y = c u_x.$$
Information travels at a rate and direction dictated by $c$, and depending on its value we need either one or two edges to uniquely determine the solution. If $0 \leq c \leq 1$ the solution is uniquely determined from the boundary on the bottom, and hence we use the basis $Q^{(0,1,0)}$. If $c > 1$ then information is coming in from the right, so we use the basis $Q^{(0,1,1)}$ on the bottom and hypotenuse edges. If $c < 0$ then information comes in from the left and we use the basis $Q^{(1,1,0)}$ on the bottom and left edges.  In~\cref{fig:Transport} we depict the three solutions. 

\section{Systems of PDEs} \label{Section:systems}
Systems of PDEs can be handled in a straightforward way by concatenating their blocks. As an example, we can  solve the Poisson equation with Neumann conditions by re-expressing the PDE as a first-order system: writing $v = u_x$ expressed in the basis $\vc Q^{(1,0,1)}$,  and $w = u_y$ expressed in the basis $\vc Q^{(0,1,1)}$,  the system becomes 
$$
\begin{pmatrix}
   0 & -R_x \tilde S_{(1,0,1)}^{(1,0,0)} & 0  \\
   0 & 0  & -R_y\tilde  S_{(0,1,1)}^{(0,1,0)} \\
   0 & R_z  \tilde S_{(1,0,1)}^{(0,0,1)} & R_z \tilde S_{(0,1,1)}^{(0,0,1)} \\
    D_{x,(0,0,0)}^{(1,0,1)} & -S_{(0,0,0)}^{(1,0,1)} \tilde S_{(1,0,1)}^{(0,0,0)} & 0 \\
   D_{y,(0,0,1)}^{(0,1,1)} & 0 & -S_{(0,0,0)}^{(0,1,1)} \tilde S_{(0,1,1)}^{(0,0,0)}  \\
   0 & \tilde D_{x,(1,0,1)}^{(0,0,0)} & \tilde D_{y,(0,1,1)}^{(0,0,0)}
  \end{pmatrix}\! \Vectt[u,v,w] = \Vectt[0,0,0,0,0, \vc f]
$$

\subsubsection{Example 6: Helmholtz equation in a polygon}
\begin{figure}[htbp]
\begin{center}
\begin{tabular}{cc}
\includegraphics[width=0.45\textwidth]{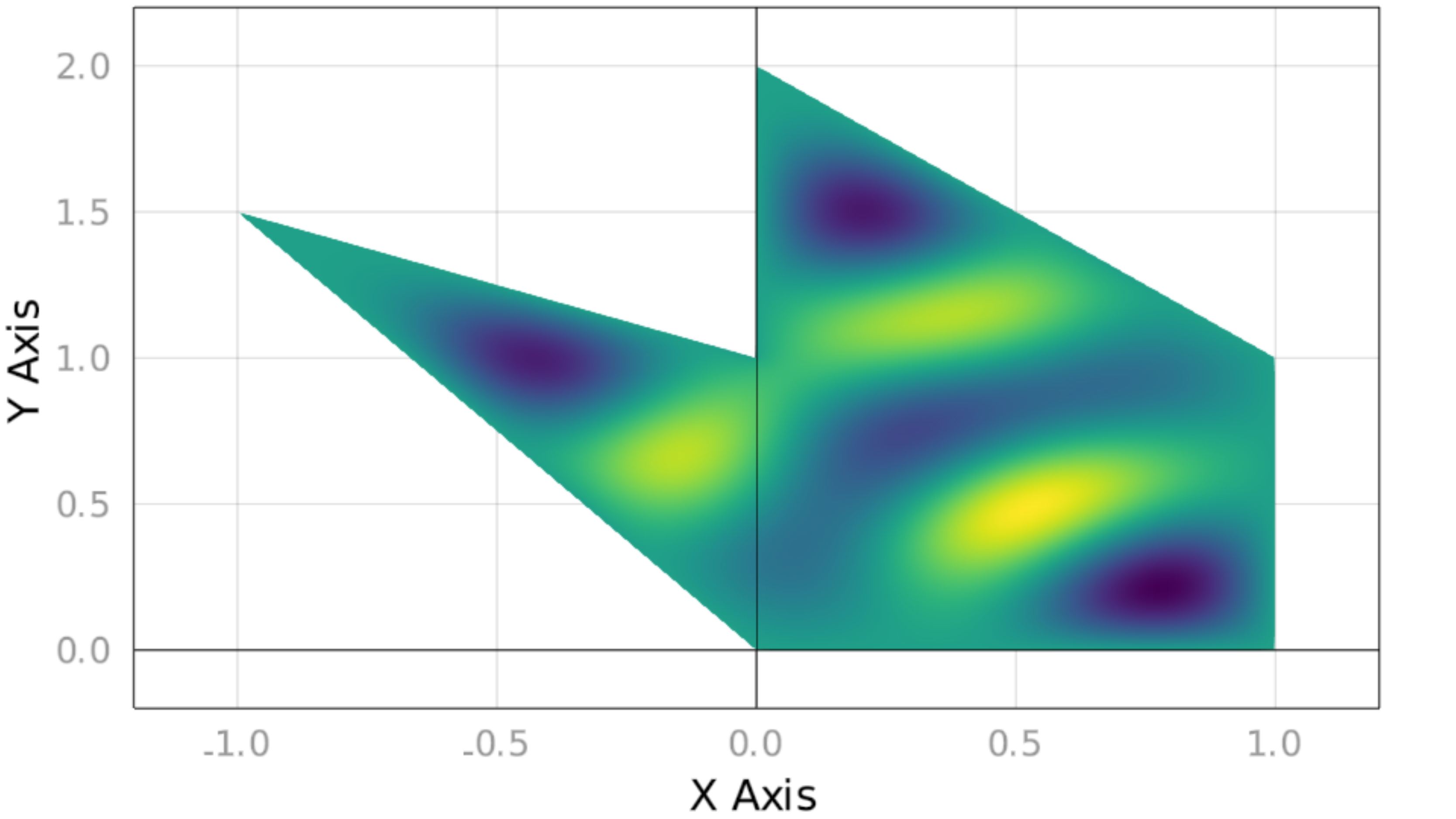}&
\includegraphics[width=0.45\textwidth]{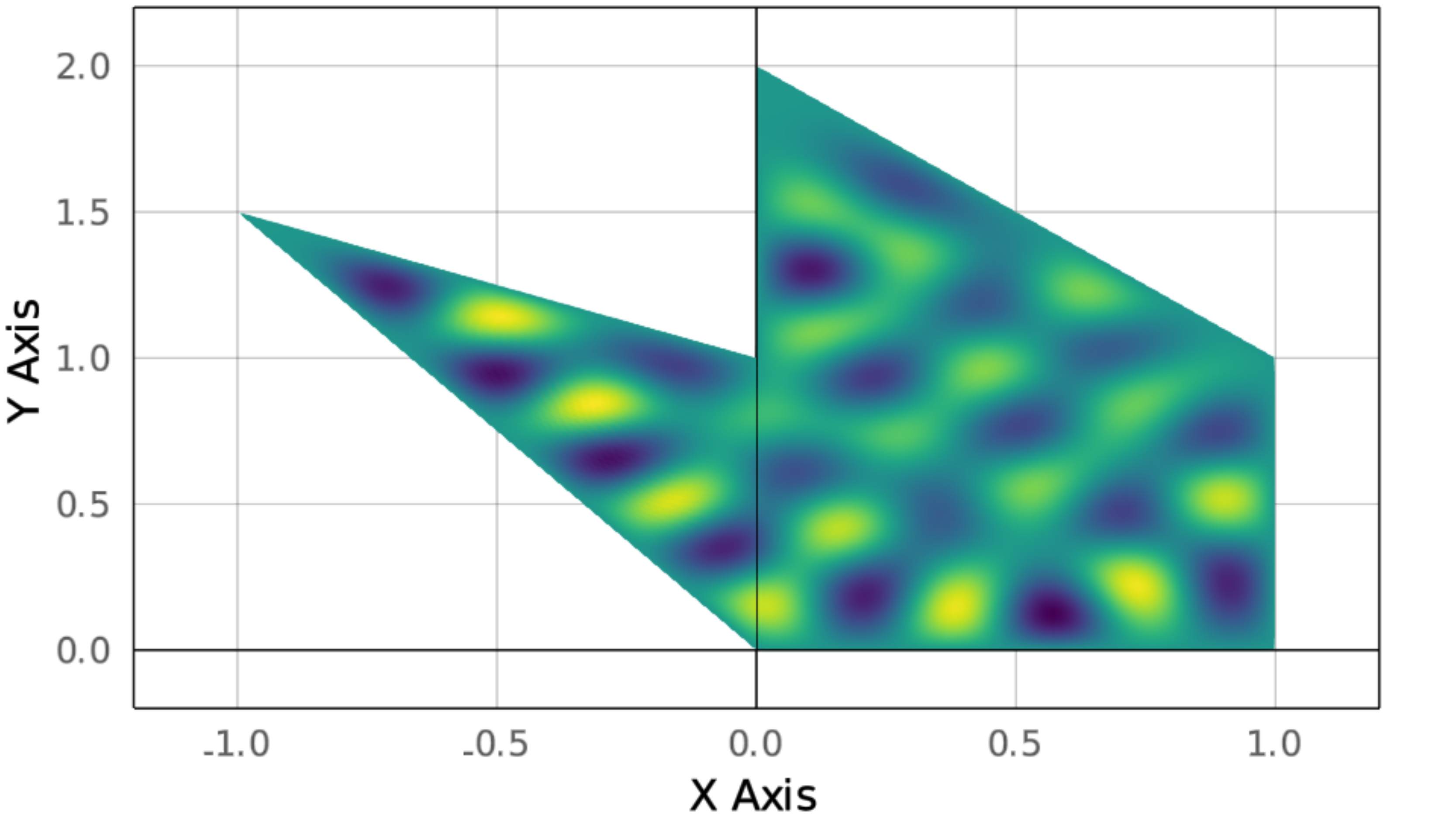}
\end{tabular}
\caption{The  solution to $(\Delta^2  + k^2 )  u = 0$ with Dirichlet boundary conditions fixed to one, for $ k= 10 $ (left) and $ 20$ (right).
}
\label{fig:HelmholtzPolygon}
\end{center}
\end{figure} 

Note that being able to handle systems of PDEs in this manner also allows us to solve on polygonal domains that are partitioned into triangular elements. For example, consider the Helmholtz equation 
$$
u_{xx} + u_{yy} + k^2 u = 0
$$
on the polygonal domain with the vertices $(0,0)$, $(1,0)$, $(1,1)$, $(0,2)$, $(0,1)$, and $(-1,1.5)$. We can decompose this domain into 4 triangles and represent the solution as well as its first derivatives in orthogonal polynomial expansions. This leads to a system of $4 \times 3 = 12$ PDEs. We then impose continuity of the value and the normal derivative across the interfaces of each element, exploiting the fact that the restriction operator maps to the same basis of Legendre polynomials. (The orientation may be different, but reversing orientation of Legendre expansions corresponds to multiplying by a diagonal matrix that swaps the signs of every other coefficients.)   We show the success of this approach in~\cref{fig:HelmholtzPolygon} for $k=10$ (left) and $k=20$ (right).

The discretization of the PDE system is sparse, and the complexity of building the matrices is an optimal  $\mathcal{O}(N^2)$ using degree $N$ polynomials within each element. 

\section{Conclusions}\label{Section:future}
We have shown that bivariate orthogonal polynomials can lead to sparse discretizations of general linear PDEs on triangles with Dirichlet and Neumann boundary conditions. Instead of quadrature, we use sparse recurrence relationships combined with specialized linear algebra routines, allowing optimal complexity for building the linear systems.  Multiple triangles can be patched together to solve PDEs on polygonal domains.  

Another extension is to tetrahedra in 3D and higher. We expect this to be straightforward because the definitions of orthogonal polynomials on higher dimensional simplices is very similar to the 2D  case. In 3D, we can use the following polynomials: 
$$
P_{n,k,j}^{(a,b,c,d)}(x,y,z) := P_{n-j,k}^{(a,b,2j+c+d+1)}(x,y) (1-x-y)^j P_j^{(d,c)}\pr({z \over 1-x-y}),
$$
which are orthogonal with respect to $x^a y^b z^c (1-x-y-z)^d$ on the unit 3D simplex.  The most time-consuming part of such an extension is deriving the recurrences relationships. Note that in 3D and higher the sparsity of our construction is useful even for small discretisation sizes, as a degree $N$ dense discretisation (e.g. arising from collocation) would require calculating $\mathcal{O}(N^6)$ entries, where the proposed construction would require an optimal $\mathcal{O}(N^3)$ operations. 

We used direct solvers via SuiteSparse to solve the resulting discretizations, which is fairly efficient with even millions of unknowns. However, to push the methodology further we will need robust iterative methods and the development of preconditioners. It is not yet clear how to design preconditioners in this setting. 

\section*{Acknowledgments} 
This work began when the second author visited the first author at The University of Sydney. We are grateful for the travel support provided by The University of Sydney. We also thank Andrew Horning and Nicolas Boulle for carefully reading the draft and improving the text. 

%
%
%
%
%
%
%

\appendix

%
%

\section{Recurrence relationships for Jacobi polynomials on the triangle} \label{Appendix:PRecurrences}
Here, we outline the recurrence relationships for $P_{n,k}^{(a,b,c)}(x,y)$ that we employ, which were previously derived in~\cite{Olver_18_01,Xu_TA}. We define $z := 1-x-y$ and ${\partial \over \partial z} := {\partial \over \partial y} - {\partial \over \partial x}$.

\begin{corollary}\cite[Corollary 1]{Olver_18_01}\label{Corollary:Differentiation}
The following recurrence relations for the partial derivatives hold:
\begin{align*}
(2k+b+c+1) \ddx \PPabc &= (n+k+a+b+c+2)(k+b+c+1){P}_{n-1,k}^{(a+1,b,c+1)} \notag\\
&\qquad  \qquad + (k+b)(n+k+b+c+1){P}_{n-1,k-1}^{(a+1,b,c+1)}, \\
\ddy \PPabc  &= (k + b + c + 1) {P}_{n-1,k-1}^{(a,b+1,c+1)},  \label{ddy}\\
(2k+b+c+1)\ddz\PPabc  & = -(n+k+a+b+c+2) (k+b+c+1) {P}_{n-1,k}^{(a+1,b+1,c)} \notag \\
   & \qquad \qquad + (k+c)(n+k+b+c+1) {P}_{n-1,k-1}^{(a+1,b+1,c)}.   
\end{align*}
\end{corollary}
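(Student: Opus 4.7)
The plan is to differentiate the explicit factorization in~\cref{eq:JacobiPolynomials} and reduce each identity to standard manipulations with univariate shifted Jacobi polynomials. I would handle the three identities in the order $\ddy$, $\ddx$, $\ddz$, because the first is essentially immediate, the second carries the main algebraic burden, and the third follows from the second by a reflection symmetry. Throughout, write $t := y/(1-x)$, so that $\ddy t = 1/(1-x)$ and $\ddx t = t/(1-x)$.

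For $\ddy$, only the last factor $\tilde P_k^{(c,b)}(t)$ depends on $y$. Using the chain rule and the standard derivative identity $\frac{d}{dt}\tilde P_k^{(c,b)}(t) = (k+b+c+1)\,\tilde P_{k-1}^{(c+1,b+1)}(t)$ yields
\begin{equation*}
\ddy P_{n,k}^{(a,b,c)} = (k+b+c+1)\,\tilde P_{n-k}^{(2k+b+c+1,a)}(x)\,(1-x)^{k-1}\,\tilde P_{k-1}^{(c+1,b+1)}(t).
\end{equation*}
Since $2(k-1)+(b+1)+(c+1)+1 = 2k+b+c+1$, the right-hand side matches $(k+b+c+1)P_{n-1,k-1}^{(a,b+1,c+1)}$ factor by factor.

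For $\ddx$, I would apply the product rule to the three $x$-dependent factors, using the Jacobi derivative identity on $\tilde P_{n-k}^{(2k+b+c+1,a)}(x)$ and the chain rule on $\tilde P_k^{(c,b)}(t)$. Routine rearrangement produces
\begin{multline*}
\ddx P_{n,k}^{(a,b,c)} = (n+k+a+b+c+2)\,\tilde P_{n-k-1}^{(2k+b+c+2,a+1)}(x)(1-x)^k\,\tilde P_k^{(c,b)}(t) \\
+\, \tilde P_{n-k}^{(2k+b+c+1,a)}(x)(1-x)^{k-1}\bigl[-k\,\tilde P_k^{(c,b)}(t) + (k+b+c+1)\,t\,\tilde P_{k-1}^{(c+1,b+1)}(t)\bigr].
\end{multline*}
Two standard contiguous relations for shifted Jacobi polynomials then do the rest: one rewrites the bracket $-k\,\tilde P_k^{(c,b)}(t) + (k+b+c+1)\,t\,\tilde P_{k-1}^{(c+1,b+1)}(t)$ as a combination of $\tilde P_k^{(c+1,b)}(t)$ and $\tilde P_{k-1}^{(c+1,b)}(t)$, and the other rewrites $\tilde P_{n-k}^{(2k+b+c+1,a)}(x)$ as a combination of $\tilde P_{n-k}^{(2k+b+c+1,a+1)}(x)$ and $\tilde P_{n-k-1}^{(2k+b+c+2,a+1)}(x)$. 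Multiplying through by $(2k+b+c+1)$ and collecting coefficients against $P_{n-1,k}^{(a+1,b,c+1)}$ and $P_{n-1,k-1}^{(a+1,b,c+1)}$ should reproduce the stated identity. This coefficient bookkeeping is the step I expect to be the main obstacle.

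For $\ddz$, rather than repeating the $\ddx$ calculation I would exploit the reflection $\tilde P_k^{(\alpha,\beta)}(1-t) = (-1)^k \tilde P_k^{(\beta,\alpha)}(t)$ of shifted Jacobi polynomials. Via $1-t = z/(1-x)$ this lifts to the symmetry $P_{n,k}^{(a,b,c)}(x,y) = (-1)^k P_{n,k}^{(a,c,b)}(x,1-x-y)$. A short chain-rule check shows that if $F(x,y) = G(x,1-x-y)$ then $\ddz F = -[\ddx G](x,1-x-y)$, so
\begin{equation*}
\ddz P_{n,k}^{(a,b,c)}(x,y) = (-1)^{k+1}\bigl[\ddx P_{n,k}^{(a,c,b)}\bigr]\!(x,1-x-y).
\end{equation*}
Substituting the $\ddx$ identity with $b$ and $c$ swapped, and applying the same reflection to each $P_{n-1,\cdot}^{(a+1,c,b+1)}(x,1-x-y)$ on the right-hand side to return to $\pm P_{n-1,\cdot}^{(a+1,b+1,c)}(x,y)$, should deliver the claimed $\ddz$ formula. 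The accumulated parity from the $(-1)^{k+1}$ factor and the two reflections accounts for the minus sign on the first term, and the swap $b\leftrightarrow c$ converts the coefficient $(k+b)$ into $(k+c)$.
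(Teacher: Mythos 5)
Your overall strategy---differentiate the tensor-product formula \cref{eq:JacobiPolynomials} and reduce everything to univariate Jacobi ladder and contiguous relations---is precisely how this corollary is established in the cited companion paper \cite{Olver_18_01}; the present paper gives no proof of its own, only the citation. Your $\tfrac{\partial}{\partial y}$ computation is complete and correct, and your reduction of $\tfrac{\partial}{\partial z}$ to $\tfrac{\partial}{\partial x}$ via the reflection $P_{n,k}^{(a,b,c)}(x,y)=(-1)^k P_{n,k}^{(a,c,b)}(x,1-x-y)$ is a clean, correct shortcut: the parities work out exactly as you say (note that your chain-rule identity $\tfrac{\partial}{\partial z}F=-[\tfrac{\partial}{\partial x}G](x,1-x-y)$ uses the appendix convention $\tfrac{\partial}{\partial z}=\tfrac{\partial}{\partial y}-\tfrac{\partial}{\partial x}$; the footnote in \cref{sec:differentiation} states the opposite sign, but the appendix convention is the one consistent with the stated formulae).

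The one place your plan as written would not close is the $\tfrac{\partial}{\partial x}$ bookkeeping, and the issue is which contiguous relations you invoke. Since $P_{n-1,k-1}^{(a+1,b,c+1)}=\tilde P_{n-k}^{(2k+b+c,\,a+1)}(x)(1-x)^{k-1}\tilde P_{k-1}^{(c+1,b)}(t)$, the $x$-polynomial you must produce is $\tilde P_{n-k}^{(2k+b+c,\,a+1)}$ (first parameter \emph{decreased} by one, second increased by one), not $\tilde P_{n-k}^{(2k+b+c+1,\,a+1)}$ as in the relation you name, and no standard two-term relation goes in the direction you describe. The identities that do close the argument are: (i) $(2k+b+c+1)\tilde P_k^{(c,b)}(t)=(k+b+c+1)\tilde P_k^{(c+1,b)}(t)-(k+b)\tilde P_{k-1}^{(c+1,b)}(t)$, applied to the first product-rule term, whose $\tilde P_k^{(c+1,b)}$ part already yields the entire first term of the target; (ii) the collapse of your bracket to a \emph{single} term, $-k\tilde P_k^{(c,b)}(t)+(k+b+c+1)\,t\,\tilde P_{k-1}^{(c+1,b+1)}(t)=(k+b)\tilde P_{k-1}^{(c+1,b)}(t)$, rather than a genuine two-term combination; and (iii) the mixed relation $\alpha\tilde P_m^{(\alpha,\beta)}(x)-(m+\alpha+\beta+1)(1-x)\tilde P_{m-1}^{(\alpha+1,\beta+1)}(x)=(m+\alpha)\tilde P_m^{(\alpha-1,\beta+1)}(x)$ with $m=n-k$, $\alpha=2k+b+c+1$, $\beta=a$, which combines the two leftover $\tilde P_{k-1}^{(c+1,b)}$ pieces into exactly $(k+b)(n+k+b+c+1)P_{n-1,k-1}^{(a+1,b,c+1)}$. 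All three are classical (combinations of DLMF 18.9.3--18.9.6 and 18.9.15), and with them your computation reproduces the stated coefficients exactly.
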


\begin{corollary}\cite[Corollary 2]{Olver_18_01}\label{Corollary:WeightedDifferentiation}
The following recurrence relations for the weighted partial derivatives hold:
$$
\begin{aligned}
	-(2k+b+c+1)\ddx\!\left(x^a y^b z^c \PPabc\right) &= x^{a-1}y^b z^{c-1}\Big( (k+c)(n-k+1){P}_{n+1,k}^{(a-1,b,c-1)} \\
	&\qquad\qquad + (k+1)(n-k+a){P}_{n+1,k+1}^{(a-1,b,c-1)}\Big), \\
	\ddy\!\left(x^a y^b z^c \PPabc\right) &= -(k+1)x^ay^{b-1} z^{c-1} {P}_{n+1,k+1}^{(a,b-1,c-1)}, \\
	(2k+b+c+1) \ddz\! \left(x^a y^b z^c \PPabc\right) &=  x^{a-1}y^{b-1} z^c \Big((k+b)(n-k+1){P}_{n+1,k}^{(a-1,b-1,c)} \\
	&\qquad\qquad - (k+1)(n-k+a){P}_{n+1,k+1}^{(a-1,b-1,c)}\Big).
\end{aligned} 
$$
\end{corollary}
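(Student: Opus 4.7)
The plan is to reduce all three identities to univariate weighted differentiation rules for shifted Jacobi polynomials via the Koornwinder product~\cref{eq:JacobiPolynomials}. Under the substitution $u=y/(1-x)$, so that $y=u(1-x)$ and $z=(1-u)(1-x)$, the weight factorises cleanly and we obtain
\[
x^a y^b z^c P_{n,k}^{(a,b,c)}(x,y) = \underbrace{x^a (1-x)^{b+c+k}\,\tilde P_{n-k}^{(2k+b+c+1,a)}(x)}_{F(x)}\;\cdot\;\underbrace{u^b(1-u)^c\tilde P_k^{(c,b)}(u)}_{G(u)}.
\]
The main univariate building block is the shifted form of DLMF~18.9.16, namely $\tfrac{d}{du}[u^b(1-u)^c\tilde P_k^{(c,b)}(u)]=-(k+1)u^{b-1}(1-u)^{c-1}\tilde P_{k+1}^{(c-1,b-1)}(u)$, together with its analogue in $x$ applied to the canonically weighted $x^a(1-x)^{2k+b+c+1}\tilde P_{n-k}^{(2k+b+c+1,a)}(x)$. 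The $\partial/\partial y$ identity is then immediate: since $\partial/\partial y=(1-x)^{-1}d/du$ at fixed $x$, only $G$ is differentiated; applying the displayed univariate rule and unwinding $u^{b-1}=y^{b-1}(1-x)^{1-b}$ and $(1-u)^{c-1}=z^{c-1}(1-x)^{1-c}$ collapses the residual $(1-x)$-power to $(1-x)^{k+1}$, which matches the Koornwinder form of $P_{n+1,k+1}^{(a,b-1,c-1)}$ exactly.

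The $\partial/\partial x$ identity is the principal obstacle. At fixed $y$ we have $\partial u/\partial x = u/(1-x)$, so the product/chain rule gives $\partial_x[FG]=F'(x)G(u)+F(x)G'(u)u/(1-x)$, splitting the calculation into a ``cross'' and a ``direct'' contribution. The cross term is immediate from the univariate weighted rule and contributes a multiple of $\tilde P_{n-k}^{(2k+b+c+1,a)}(x)\tilde P_{k+1}^{(c-1,b-1)}(u)$. For the direct term I would write $F(x)=(1-x)^{-(k+1)}\bigl[x^a(1-x)^{2k+b+c+1}\tilde P_{n-k}^{(2k+b+c+1,a)}(x)\bigr]$, apply the univariate weighted $x$-rule to the bracketed quantity, and product-rule the extra $(1-x)^{-(k+1)}$ factor, producing two sub-terms, one proportional to $\tilde P_{n-k+1}^{(2k+b+c,a-1)}(x)\tilde P_k^{(c,b)}(u)$ and one proportional to $\tilde P_{n-k}^{(2k+b+c+1,a)}(x)\tilde P_k^{(c,b)}(u)$. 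Comparing with the Koornwinder forms of $P_{n+1,k}^{(a-1,b,c-1)}$ and $P_{n+1,k+1}^{(a-1,b,c-1)}$, which instead carry $\tilde P_k^{(c-1,b)}(u)$ and $\tilde P_{k+1}^{(c-1,b)}(u)$, the algebraic heart is to use standard contiguous identities from DLMF~18.9---specifically, expansions of $(1-u)\tilde P_k^{(c,b)}(u)$ and $\tilde P_{k+1}^{(c-1,b-1)}(u)$ in the basis $\{\tilde P_k^{(c-1,b)},\tilde P_{k+1}^{(c-1,b)}\}$, together with the $x$-side contiguous relations among $\tilde P_{n-k+1}^{(2k+b+c,a-1)}$, $\tilde P_{n-k}^{(2k+b+c+1,a)}$, and $\tilde P_{n-k}^{(2k+b+c+2,a-1)}$---to verify the resulting bivariate identity. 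I expect the main obstacle to be purely bookkeeping: checking that after these substitutions the combined coefficients collapse to exactly $(k+c)(n-k+1)$ and $(k+1)(n-k+a)$, and that the overall normalisation reproduces the factor $-(2k+b+c+1)$ on the left-hand side.

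The $\partial/\partial z$ identity then follows from the $\partial/\partial x$ identity by the $y\leftrightarrow z$ symmetry of the Koornwinder polynomials. From $\tilde P_k^{(c,b)}(1-u)=(-1)^k\tilde P_k^{(b,c)}(u)$ one obtains $P_{n,k}^{(a,b,c)}(x,y)=(-1)^k P_{n,k}^{(a,c,b)}(x,z)$. Passing to the coordinates $(X,Y)=(x,z)$ with $Z:=1-X-Y=y$, a chain-rule check shows that $\partial/\partial z \equiv \partial/\partial y-\partial/\partial x$ coincides with $-\partial/\partial X$ at fixed $Y$, and the weighted polynomial transforms into $(-1)^k X^a Y^c Z^b P_{n,k}^{(a,c,b)}(X,Y)$, which has exactly the shape of the $\partial/\partial x$ identity with parameter triple $(a,c,b)$. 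Applying that identity and translating the two right-hand polynomials $P_{n+1,k}^{(a-1,c,b-1)}$ and $P_{n+1,k+1}^{(a-1,c,b-1)}$ back via the same symmetry introduces factors $(-1)^k$ and $(-1)^{k+1}$, flipping the sign of the second term and yielding the stated $\partial/\partial z$ identity with $+(k+b)(n-k+1)$ on the first polynomial and $-(k+1)(n-k+a)$ on the second.
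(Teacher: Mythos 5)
The paper does not prove this corollary itself — it is quoted from \cite{Olver_18_01} — and your reduction is essentially the route taken there: factor the weighted polynomial as a product of univariate weighted Jacobi factors via $u=y/(1-x)$, apply the shifted form of DLMF 18.9.16 to each factor, and finish with contiguous (raising/lowering) relations. Your treatment of $\partial/\partial y$ and the $(y\leftrightarrow z)$-symmetry reduction of $\partial/\partial z$ to $\partial/\partial x$ (including the signs $(-1)^k$, $(-1)^{k+1}$ and the identification $\partial/\partial z=-\partial/\partial X$ under the appendix's convention $\partial_z=\partial_y-\partial_x$) are correct and complete, and for $\partial/\partial x$ you have identified exactly the right contiguous relations — the only thing left undone is the finite coefficient check, including the cancellation of the two cross products $\tilde P_{n+1-k}^{(2k+b+c,a-1)}\tilde P_{k+1}^{(c-1,b)}$ and $\tilde P_{n-k}^{(2k+b+c+2,a-1)}\tilde P_{k}^{(c-1,b)}$, which is routine.
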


\begin{corollary}\cite[Corollary 3]{Olver_18_01}\label{Corollary:Conversion}
The following recurrence relations for conversions hold:
\begin{align*}
(2n+a+b+c+2)&\PPabc = \\
&(n+k+a+b+c+2){P}_{n,k}^{(a+1,b,c)} \cr
+& (n+k+b+c+1){P}_{n-1,k}^{(a+1,b,c)},\\
(2n+a+b+c+2)&(2k+b+c+1)\PPabc = \\
&(n+k+a+b+c+2)(k+b+c+1){P}_{n,k}^{(a,b+1,c)} \notag\\
- &(n-k+a)(k+b+c+1){P}_{n-1,k}^{(a,b+1,c)}\notag\\
+&(k+c)(n+k+b+c+1){P}_{n-1,k-1}^{(a,b+1,c)}\notag\\
-&(k+c)(n-k+1) {P}_{n,k-1}^{(a,b+1,c)},\\
(2n+a+b+c+2)&(2k+b+c+1)\PPabc  = \\
& (n+k+a+b+c+2)(k+b+c+1){P}_{n,k}^{(a,b,c+1)} \notag\\
-& (n-k+a)(k+b+c+1){P}_{n-1,k}^{(a,b,c+1)}\notag\\
-&(k+b)(n+k+b+c+1){P}_{n-1,k-1}^{(a,b,c+1)}\notag\\
+&(k+b)(n-k+1){P}_{n,k-1}^{(a,b,c+1)}.
\end{align*}
\label{cor:conversion}
\end{corollary}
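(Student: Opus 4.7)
\medskip

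\noindent\textbf{Proof proposal.} The plan is to exploit the product-form definition
\[
P_{n,k}^{(a,b,c)}(x,y) = \tilde P_{n-k}^{(2k+b+c+1,a)}(x)\,(1-x)^k\,\tilde P_k^{(c,b)}\!\bigl(\tfrac{y}{1-x}\bigr)
\]
from~\cref{eq:JacobiPolynomials} and to reduce each of the three identities to a combination of standard univariate contiguous relations for shifted Jacobi polynomials on $[0,1]$. Throughout, write $s := y/(1-x)$ so that the $y$-factor depends on $s$ only through $\tilde P_k^{(c,b)}(s)$, independent of the $x$-factor.

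For the first identity, the parameter $a$ appears only as the second Jacobi index of the $x$-factor. I would apply the univariate contiguous relation
\[
(2m+\alpha+\beta+2)\tilde P_m^{(\alpha,\beta)}(x) = (m+\alpha+\beta+2)\tilde P_m^{(\alpha,\beta+1)}(x) + (m+\beta+1)\tilde P_{m-1}^{(\alpha,\beta+1)}(x)
\]
with $\alpha = 2k+b+c+1$, $\beta = a$, $m = n-k$. Multiplying both sides by $(1-x)^k \tilde P_k^{(c,b)}(s)$ and recognizing the right-hand side as $P_{n,k}^{(a+1,b,c)}$ and $P_{n-1,k}^{(a+1,b,c)}$ (the $x$-degree drops by one but $k$ and the $y$-factor are unchanged) yields the first identity directly.

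The second and third identities are structurally identical under the symmetry exchanging $b,c$ and reflecting $y \leftrightarrow z$, so I would carry out only the $b$-increment in detail. The difficulty is that raising $b$ to $b+1$ affects both factors: the second parameter of $\tilde P_k^{(c,b)}(s)$ advances, and simultaneously the first parameter of $\tilde P_{n-k}^{(2k+b+c+1,a)}(x)$ jumps from $2k+b+c+1$ to $2k+b+c+2$. The plan is to apply a contiguous relation in the $s$-variable of the form
\[
(2k+b+c+1)\tilde P_k^{(c,b)}(s) = (k+b+c+1)\tilde P_k^{(c,b+1)}(s) + (k+c)\bigl(\text{lower-order in }k\text{ and }(1-s)\text{-weighted terms}\bigr),
\]
then use a second contiguous relation in $x$ to promote the first Jacobi index of the $x$-factor by one, and absorb the $k\to k-1$ contributions (which create a spurious $(1-x)^{-1}$ from the mismatch of $(1-x)^k$ versus $(1-x)^{k-1}$) by re-expressing them via $P_{n,k-1}^{(a,b+1,c)}$ and $P_{n-1,k-1}^{(a,b+1,c)}$. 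After clearing the common denominator $(2n+a+b+c+2)(2k+b+c+1)$, the four resulting Koornwinder terms should assemble to exactly the right-hand side of identity~2. Identity~3 follows by the same argument with $b\leftrightarrow c$ and appropriate sign changes arising from $z = 1-x-y$.

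The main obstacle is the bookkeeping in Step~2: combining one contiguous relation in the $s$-variable and one in the $x$-variable while tracking the $(1-x)^k$ weight, and verifying that the rational coefficients collapse to the stated form. This is essentially algebraic, but it requires simultaneously using the three-term recurrence of Jacobi polynomials and the Christoffel-type shift for the parameter $2k+b+c+1$, and the sign of the $\tilde P_{k-1}^{(c,b+1)}$ contribution must be reconciled with the $-(k+c)(n-k+1)P_{n,k-1}^{(a,b+1,c)}$ term on the right-hand side. Once this single algebraic identity is verified, identity~3 follows by symmetry with no additional work.
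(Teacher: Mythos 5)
Your overall strategy is sound and is essentially how these identities are derived in the cited source \cite[Cor.~3]{Olver_18_01} (the present paper gives no proof of its own, deferring entirely to that reference): apply univariate contiguous relations to the two Jacobi factors in the product formula \cref{eq:JacobiPolynomials}, and for the $k\to k-1$ contributions absorb the surplus factor of $(1-x)$ by a lowering relation, since the first parameter of the $x$-factor of $P_{n,k-1}^{(a,b+1,c)}$ is $2(k-1)+(b+1)+c+1=2k+b+c$, one less than that of $P_{n,k}^{(a,b,c)}$. Your symmetry $P_{n,k}^{(a,b,c)}(x,y)=(-1)^kP_{n,k}^{(a,c,b)}(x,1-x-y)$ does transport the second identity to the third, including the sign flips on the two $k-1$ terms.

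However, the one univariate relation you state explicitly is wrong, and used literally it does not reproduce the claimed coefficients. With the convention that $\tilde P_m^{(\alpha,\beta)}$ has weight $(1-x)^{\alpha}x^{\beta}$ on $[0,1]$, the correct parameter-raising relation (cf.\ \cite[(18.9.5)--(18.9.6)]{DLMF}) is
\begin{equation*}
(2m+\alpha+\beta+1)\,\tilde P_m^{(\alpha,\beta)}(x) = (m+\alpha+\beta+1)\,\tilde P_m^{(\alpha,\beta+1)}(x) + (m+\alpha)\,\tilde P_{m-1}^{(\alpha,\beta+1)}(x),
\end{equation*}
which with $\alpha=2k+b+c+1$, $\beta=a$, $m=n-k$ gives exactly the stated $2n+a+b+c+2$, $n+k+a+b+c+2$, and $n+k+b+c+1$; your version, with offsets $+2$ and last coefficient $m+\beta+1=n-k+a+1$, does not. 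Likewise, in the second identity the $\tilde P_{k-1}^{(c,b+1)}$ contribution is not ``$(1-s)$-weighted'': the same relation with $(\alpha,\beta,m)=(c,b,k)$ gives simply $(2k+b+c+1)\tilde P_k^{(c,b)}(s)=(k+b+c+1)\tilde P_k^{(c,b+1)}(s)+(k+c)\tilde P_{k-1}^{(c,b+1)}(s)$; the $\tilde P_k^{(c,b+1)}$ term is then handled by the $\alpha$-raising relation on the $x$-factor (producing the first two terms), while the $\tilde P_{k-1}^{(c,b+1)}$ term is handled by $(2m+\alpha+\beta+2)(1-x)\tilde P_m^{(\alpha+1,\beta)}=(m+\alpha+1)\tilde P_m^{(\alpha,\beta)}-(m+1)\tilde P_{m+1}^{(\alpha,\beta)}$ with $\alpha+1=2k+b+c+1$, yielding the coefficients $(n+k+b+c+1)$ and $-(n-k+1)$ of the last two terms. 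With these corrections your outline closes; as written, the explicit formula you rely on for the first (and easiest) identity is the incorrect one.
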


\begin{corollary}\cite[Corollary 4]{Olver_18_01}\label{Corollary:Lowering}
The following recurrence relations for lowering operators hold:
\begin{align*}
	(2n+a+b+c+2)  x \PPabc = (n-k+a) {P}_{n,k}^{(a-1,b,c)} +(n-k+1) {P}_{n+1,k}^{(a-1,b,c)}, \label{Mx}
	\end{align*}
\begin{align*}
 (2k+b+c+1)&(2n+a+b+c+2) y \PPabc = \\
 &(k+b)(n+k+b+c+1){P}_{n,k}^{(a,b-1,c)}\cr
 -&(k+1)(n-k+a){P}_{n,k+1}^{(a,b-1,c)}\cr
-&(k+b)(n-k+1){P}_{n+1,k}^{(a,b-1,c)}\cr
+& (k+1)(n+k+a+b+c+2){P}_{n+1,k+1}^{(a,b-1,c)}, 
\label{My}
\end{align*}
\begin{align*}
	(2 k + b + c+1)& (2n + a + b + c + 2)z \PPabc  = \\
	&(k + c)(n + k + b + c + 1){P}_{n,k}^{(a,b,c-1)} \cr
	+&  (k + 1)(n - k + a) {P}_{n,k+1}^{(a,b,c-1)}\cr
	-&  (k + c) (n - k + 1){P}_{n+1,k}^{(a,b,c-1)}  \cr
	-& (k + 1)(n + k + a + b + c + 2){P}_{n+1,k+1}^{(a,b,c-1)}. 
	 \label{Mxy}
\end{align*}
\label{cor:mult}
\end{corollary}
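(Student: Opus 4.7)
My plan is to derive all three identities from the explicit product form in~\cref{eq:JacobiPolynomials}, namely
\[
P_{n,k}^{(a,b,c)}(x,y) = \tilde P_{n-k}^{(2k+b+c+1,a)}(x)\,(1-x)^k\,\tilde P_k^{(c,b)}\!\left(\tfrac{y}{1-x}\right),
\]
and reduce each to standard univariate shifted-Jacobi contiguous identities, specifically
$x\tilde P_m^{(\alpha,\beta)}(x) = \lambda_m \tilde P_{m+1}^{(\alpha,\beta-1)}(x) + \mu_m \tilde P_m^{(\alpha,\beta-1)}(x)$
and
$(1-x)\tilde P_m^{(\alpha,\beta)}(x) = \lambda_m'\tilde P_{m+1}^{(\alpha-1,\beta)}(x) + \mu_m'\tilde P_m^{(\alpha-1,\beta)}(x)$,
whose coefficients appear in \cite{DLMF}.

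For the $x$-identity, I observe that the shift $a\mapsto a-1$ touches only the second parameter of the outer factor $\tilde P_{n-k}^{(2k+b+c+1,a)}(x)$, while the factor $(1-x)^k\tilde P_k^{(c,b)}(y/(1-x))$ is independent of $a$. Applying the first univariate identity above with $(\alpha,\beta,m)=(2k+b+c+1,a,n-k)$ and multiplying by the untouched $y$-dependent factor produces exactly the two-term combination $P_{n,k}^{(a-1,b,c)}$ and $P_{n+1,k}^{(a-1,b,c)}$ at the same second index $k$, and the rational coefficients $n-k+a$ and $n-k+1$ match the DLMF table entries after clearing the common denominator $2n+a+b+c+2$.

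For the $y$-identity I factor $y = (1-x)\cdot(y/(1-x))$ and apply
$t\tilde P_k^{(c,b)}(t) = \gamma_k\tilde P_{k+1}^{(c,b-1)}(t) + \delta_k\tilde P_k^{(c,b-1)}(t)$
to the inner polynomial at $t=y/(1-x)$. This produces two pieces with inner degree $k$ or $k+1$, each multiplied by an extra factor $(1-x)$. Because lowering $b$ also shifts the outer parameter from $2k'+b+c+1$ to $2k'+b+c$ (and the shift differs depending on whether the inner degree is $k$ or $k+1$), I then rewrite each outer factor $\tilde P_{n-k}^{(2k+b+c+1,a)}(x)$ by a contiguous identity that lowers its first parameter by one at the cost of introducing an $m\mapsto m+1$ degree shift, simultaneously absorbing the extra $(1-x)$ power. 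The resulting four terms are $P_{n,k}^{(a,b-1,c)}$, $P_{n,k+1}^{(a,b-1,c)}$, $P_{n+1,k}^{(a,b-1,c)}$, $P_{n+1,k+1}^{(a,b-1,c)}$, and matching the products $\gamma_k\mu_m'$, $\gamma_k\lambda_m'$, $\delta_k\mu_m'$, $\delta_k\lambda_m'$ against the stated formulas yields the claim. The principal obstacle is exactly this bookkeeping step: four rational coefficients must be simplified in closed form and the sign pattern $(+,-,-,+)$ verified, which, while routine, is the most error-prone calculation in the proof.

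For the $z$-identity I would avoid repeating the same delicate computation by using the Jacobi reflection $\tilde P_k^{(c,b)}(1-t) = (-1)^k\tilde P_k^{(b,c)}(t)$ applied at $t=y/(1-x)$, so that $1-t = z/(1-x)$. This identifies the basis obtained from $\{P_{n,k}^{(a,b,c)}\}$ by swapping $b\leftrightarrow c$ with the basis obtained from swapping $y\leftrightarrow z$, up to the sign $(-1)^k$. Feeding the already-proved $y$-identity through this involution, and noting that the signs $(-1)^k$ and $(-1)^{k+1}$ flip the two odd-$\ell$ terms, produces the $z$-identity with its sign pattern $(+,+,-,-)$ and the correct coefficients (with $b$ and $c$ interchanged throughout).
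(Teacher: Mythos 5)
Your proposal is essentially correct, and it is worth noting that this paper itself supplies no proof of \cref{Corollary:Lowering}: the result is imported verbatim from \cite[Corollary 4]{Olver_18_01}, where it is derived by exactly the route you describe, namely exploiting the product structure of \cref{eq:JacobiPolynomials} to reduce everything to two-term contiguous relations for univariate Jacobi polynomials. Your treatment of the $x$-identity is clean (the inner factor is $a$-independent, so only the outer factor is touched), and your reflection argument for the $z$-identity --- using $\tilde P_k^{(c,b)}(1-t)=(-1)^k\tilde P_k^{(b,c)}(t)$ with $1-t=z/(1-x)$ to pull the $z$-relation back through the already-proved $y$-relation --- checks out sign by sign, including the flip from $(+,-,-,+)$ to $(+,+,-,-)$, and is a genuine economy over the direct derivation in the reference. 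The one place where your write-up is imprecise is the middle step of the $y$-identity: after applying $t\tilde P_k^{(c,b)}(t)=\bigl((k+b)\tilde P_k^{(c,b-1)}(t)+(k+1)\tilde P_{k+1}^{(c,b-1)}(t)\bigr)/(2k+b+c+1)$, the two branches require \emph{different} univariate identities on the outer factor, not the same one. The inner-degree-$k$ branch carries a surplus factor $(1-x)$ and needs the $(1-x)$-lowering relation, which sends $\tilde P_{n-k}^{(2k+b+c+1,a)}$ to a combination of $\tilde P_{n-k}^{(2k+b+c,a)}$ and $\tilde P_{n-k+1}^{(2k+b+c,a)}$; but the inner-degree-$(k{+}1)$ branch already has the correct power $(1-x)^{k+1}$ and instead needs the degree-preserving parameter-\emph{raising} conversion $\tilde P_{n-k}^{(2k+b+c+1,a)}=\bigl((n+k+a+b+c+2)\tilde P_{n-k}^{(2k+b+c+2,a)}-(n-k+a)\tilde P_{n-k-1}^{(2k+b+c+2,a)}\bigr)/(2n+a+b+c+2)$, with no $(1-x)$ to absorb and a degree shift in the opposite direction. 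With that correction the four products of coefficients assemble precisely into the stated signs and factors, so the argument goes through.
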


\section{Dirichlet basis definitions}\label{Appendix:DirichletBasis} 
Here, we define a basis, denoted by $Q_{n,k}^{(a,b,c)}(x,y)$, that we employ to impose general Dirichlet and Neumann boundary conditions. We construct $Q_{n,k}^{(a,b,c)}(x,y)$ by augmenting the weighted basis 
$$
x^a y^b z^c P_{n,k}^{(a,b,c)}(x,y)
$$
so that it spans all the polynomials with $a,b,c\in \set{0,1}$. Depending on the choice of $a$, $b$, and $c$, we obtain sparse restriction operators to one, two, or three edges of the triangle. We refer to this basis as the {\it Dirichlet basis} for its usefulness in solving PDEs with Dirichlet and Neumann boundary conditions. 

\subsection{One-edge Dirichlet basis}

\begin{definition}
The following polynomials vanish at $x = 0$ apart from when $k=n$:
\begin{align*}
		Q_{0,0}^{(1,0,0)}(x,y) &:= 1, \cr
		Q_{n,k}^{(1,0,0)}(x,y) &:= x P_{n-1,k}^{(1,0,0)}(x,y)   \qquad \hbox{ for $k = 0,\ldots,n-1$}, \cr
		Q_{n,n}^{(1,0,0)}(x,y) &:= P_{n,n}(x,y).
\end{align*}
The following polynomials vanish at $y = 0$ apart from when $k=0$:
\begin{align*}
		Q_{n,0}^{(0,1,0)}(x,y) &:= \tilde P_n^{(0,0)}(x),	 \cr
		Q_{n,k}^{(0,1,0)}(x,y) &:= y P_{n-1,k-1}^{(0,1,0)}(x,y)   \qquad \hbox{ for $k = 1,\ldots,n$} .
\end{align*}
The following polynomials vanish at $z = 0$ (i.e., $y = 1-x$) apart from when $k=0$:
\begin{align*}
		Q_{n,0}^{(0,0,1)}(x,y) &:= \tilde P_n^{(0,0)}(x),	 \cr
		Q_{n,k}^{(0,0,1)}(x,y) &:= z P_{n-1,k-1}^{(0,0,1)}(x,y)   \qquad \hbox{ for $k = 1,\ldots,n$}.
\end{align*}	
			
\end{definition}					
The ordering is chosen so that the conversion operators derived below are upper triangular.   Each basis has a simple restriction formula to the corresponding edge.

\begin{proposition}
Restriction operator to $x = 0$:
\begin{align*}
			Q_{n,n}^{(1,0,0)}(0,y) &:= \tilde P_{n}^{(0,0)}(y), \cr
		Q_{n,k}^{(1,0,0)}(0,y) &:= 0  \qquad \hbox{for $k \neq n$}.
\end{align*}
Restriction operator to $y = 0$:
\begin{align*}
			Q_{n,0}^{(0,1,0)}(x,0) &:= \tilde P_{n}^{(0,0)}(x), \cr
			Q_{n,k}^{(0,1,0)}(x,0) &:= 0  \qquad \hbox{for $k \neq 0$}.
\end{align*}
Restriction operator to $z = 0$:	
\begin{align*}
			Q_{n,0}^{(0,0,1)}(x,1-x) &:= \tilde P_{n}^{(0,0)}(x), \cr
			Q_{n,k}^{(0,0,1)}(x,1-x) &:= 0  \qquad \hbox{for $k \neq 0$}.
\end{align*}

\end{proposition}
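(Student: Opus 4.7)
The proposition splits into three edge-restriction statements, each consisting of one ``exceptional'' case in which the polynomial restricts to a shifted Legendre polynomial and $n$ ``non-exceptional'' cases in which the polynomial vanishes. The plan is simply to substitute the relevant edge condition into the definitions of $Q_{n,k}^{(a,b,c)}$ given just above the proposition, invoking the Koornwinder representation~\cref{eq:Koornwinder} for the single case that is not immediate.

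For the vanishing statements, each follows directly from the definition. For $k=0,\ldots,n-1$, the definition $Q_{n,k}^{(1,0,0)}(x,y) = x\, P_{n-1,k}^{(1,0,0)}(x,y)$ carries an explicit factor of $x$, which kills the polynomial at $x=0$. Similarly $Q_{n,k}^{(0,1,0)}(x,y) = y\, P_{n-1,k-1}^{(0,1,0)}(x,y)$ vanishes at $y=0$ for $k\geq 1$, and $Q_{n,k}^{(0,0,1)}(x,y) = z\, P_{n-1,k-1}^{(0,0,1)}(x,y)$ vanishes on $z=0$ (equivalently $y=1-x$) for $k\geq 1$.

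For the ``bottom'' and ``hypotenuse'' exceptional cases, the restriction is also immediate: the definitions $Q_{n,0}^{(0,1,0)}(x,y) = \tilde P_n^{(0,0)}(x)$ and $Q_{n,0}^{(0,0,1)}(x,y) = \tilde P_n^{(0,0)}(x)$ have no $y$-dependence, so they evaluate to $\tilde P_n^{(0,0)}(x)$ on both $y=0$ and $y=1-x$. The only case requiring a small calculation is the left edge with $k=n$, where $Q_{n,n}^{(1,0,0)}(0,y) = P_{n,n}(0,y)$. Using~\cref{eq:Koornwinder},
\[
P_{n,n}(x,y) = \tilde P_0^{(2n+1,0)}(x)\,(1-x)^n\, \tilde P_n^{(0,0)}\!\left(\tfrac{y}{1-x}\right),
\]
and since $\tilde P_0^{(2n+1,0)}\equiv 1$, setting $x=0$ yields $P_{n,n}(0,y) = \tilde P_n^{(0,0)}(y)$. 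The degenerate subcase $n=0$ of the first assertion is handled by $Q_{0,0}^{(1,0,0)}(0,y) = 1 = \tilde P_0^{(0,0)}(y)$.

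There is no real obstacle: the proposition is essentially a bookkeeping verification against the definition of the Dirichlet basis, with the sole substantive step being the evaluation of $P_{n,n}(0,y)$ via the Koornwinder formula. The deliberate choice of the defining factors $x$, $y$, and $z$ in the non-exceptional cases is precisely what makes the restriction operators $R_x$, $R_y$, $R_z$ used later in \cref{Section:dirichlet} act by selecting a single nonzero entry per block.
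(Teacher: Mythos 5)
Your proof is correct and matches the paper's (implicit) argument: the paper states this proposition without proof, treating it as an immediate consequence of the definitions of $Q_{n,k}^{(a,b,c)}$, with the only nontrivial evaluation being $P_{n,n}(0,y)=\tilde P_n^{(0,0)}(y)$ from the Koornwinder formula, exactly as you compute. Nothing is missing.
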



\subsection{Two-edge Dirichlet basis}

To handle two edges, consider first $x=0$ and $y=0$.  As before, we wish to construct a basis that adds in the missing polynomials to $xyP_{n,k}^{(1,1,0)}(x,y)$ in a way that the restriction operators have the necessary structure.  To do this, we select polynomials so that we can construct the conversion operator to expansions in the basis $\vc Q^{(1,0,0)}$ and use the restriction operators we already have (see~\cref{Corollary:OneEdgeConversion}). 
\begin{definition} The following polynomials vanish at $x = 0$ and $y = 0$ apart from when $k = 0, n$:
\begin{align*}
		Q_{0,0}^{(1,1,0)}(x,y) &:= 1,	 \cr	
			Q_{n,0}^{(1,1,0)}(x,y) &:= x \tilde P_{n-1}^{(0,1)}(x),	 \cr
		Q_{n,k}^{(1,1,0)}(x,y) &:= x y P_{n-2,k-1}^{(1,1,0)}(x,y)   \qquad \hbox{ for $k = 1,\ldots,n-1$}, \cr
		Q_{n,n}^{(1,1,0)}(x,y) &:= y P_{n-1,n-1}^{(0,1,0)}(x,y).
\end{align*}			
 The following polynomials vanish at $x = 0$ and $z = 0$ apart from when $k = 0, n$:
\begin{align*}
		Q_{0,0}^{(1,0,1)}(x,y) &:= 1,	 \cr	
			Q_{n,0}^{(1,0,1)}(x,y) &:= x \tilde  P_{n-1}^{(0,1)}(x),		 \cr
		Q_{n,k}^{(1,0,1)}(x,y) &:= x z P_{n-2,k-1}^{(1,0,1)}(x,y)   \qquad \hbox{ for $k = 1,\ldots,n-1$}, \cr
		Q_{n,n}^{(1,0,1)}(x,y) &:= z P_{n-1,n-1}^{(0,0,1)}(x,y).
\end{align*}				
 The following polynomials vanish at $y = 0$ and $z = 0$ apart from when $k = 0, 1$:
\begin{align*}
		Q_{0,0}^{(0,1,1)}(x,y) &:= 1,	 \cr	
			Q_{n,0}^{(0,1,1)}(x,y) &:= (1-x)P_{n-1,0}(x,y) = (1-x)\tilde P_{n-1}^{(1,0)}(x), \cr
			Q_{n,1}^{(0,1,1)}(x,y) &:= (1-x-2y) P_{n-1,0}(x,y) =(1-x-2y) \tilde P_{n-1}^{(1,0)}(x),	 \cr
		Q_{n,k}^{(0,1,1)}(x,y) &:=  y z P_{n-2,k-2}^{(0,1,1)}(x,y)   \qquad \hbox{ for $k = 2,\ldots,n$}.
\end{align*}							
\end{definition}

\subsection{Three-edge Dirichlet basis}
We finally get to three edges.  Again, we want to choose the extra polynomials so that we can easily convert to any two-edge cases.  The following does the trick:
\begin{definition}
The following polynomials vanish at $x = 0$, $y = 0$, and $z = 0$ apart from when $k = 0,1$, and $ n$:
\begin{align*}
		Q_{0,0}^{(1,1,1)}(x,y) &:= 1,	 \cr	
			Q_{1,0}^{(1,1,1)}(x,y) &:= 1-2x,  \cr		
			Q_{1,1}^{(1,1,1)}(x,y) &:= 1-x-2y,  \cr					
			Q_{n,0}^{(1,1,1)}(x,y) &:= x (1-x) P_{n-2,0}^{(1,0,0)}(x,y) = x(1-x) P_{n-2}^{(1,1)}(x), \cr
			Q_{n,1}^{(1,1,1)}(x,y) &:=x (1-x-2y)  P_{n-2,0}^{(1,0,0)}(x,y)	= x (1-x-2y) P_{n-2}^{(1,1)}(x),  \cr
		Q_{n,k}^{(1,1,1)}(x,y) &:=  x y z P_{n-3,k-2}^{(1,1,1)}(x,y)   \qquad \hbox{ for $k = 2,\ldots,n-1$}, \cr
			Q_{n,n}^{(1,1,1)}(x,y) &:= y z P_{n-2,n-2}^{(0,1,1)}(x,y). 
\end{align*}
\end{definition}

\section{Dirichlet basis recurrence relationships}\label{Appendix:DirichletConversion}
The following allows us to construct sparse conversion operators from the one-edge Dirichlet basis to the standard Jacobi polynomials on the triangle:
\begin{corollary}\label{Corollary:OneEdgeConversion}
The following recurrence relationships hold: 
\meeq{
	Q_{0,0}^{(1,0,0)}(x,y) = P_{0,0}(x,y), \ccr
	(2n+1)Q_{n,k}^{(1,0,0)}(x,y) = (n-k)\br[P_{n,k}(x,y) + P_{n-1,k}(x,y) ],\ccr
	Q_{n,n}^{(1,0,0)}(x,y) = P_{n,n}(x,y), \ccr
	(2n+1)Q_{n,0}^{(0,1,0)}(x,y) = (n+1)P_{n,0}(x,y)-nP_{n-1,0}(x,y), \ccr	
	(2n+1)Q_{n,k}^{(0,1,0)}(x,y) = (n+k+1) P_{n,k}(x,y) -(n-k+1) P_{n,k-1}(x,y)\\
	& -(n-k)P_{n-1,k}(x,y)+(n+k) P_{n-1,k-1}(x,y), \ccr
	(2n+1)Q_{n,0}^{(0,0,1)}(x,y) = (n+1)P_{n,0}(x,y)-nP_{n-1,0}(x,y), \ccr	
	(2n+1)Q_{n,k}^{(0,0,1)}(x,y) = -(n+k+1) P_{n,k}(x,y)-(n-k+1) P_{n,k-1}(x,y)  \\
	&+(n-k)P_{n-1,k}(x,y)+(n+k) P_{n-1,k-1}(x,y).
}
\end{corollary}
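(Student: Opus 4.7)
The plan is to verify each equation case-by-case, with the only nontrivial tool being the lowering operator identities of \cref{Corollary:Lowering} (multiplication by $x$, $y$, and $z$). Since the one-edge Dirichlet basis is defined piecewise in $k$, I would organize the proof by the triple $(a,b,c) \in \{(1,0,0),(0,1,0),(0,0,1)\}$ and, within each case, separate the trivial boundary values ($k=n$ for $(1,0,0)$; $k=0$ for the other two) from the bulk indices, where the definition involves a lowering factor ($x$, $y$, or $z$) applied to a Jacobi polynomial on the triangle.

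For the $(1,0,0)$ family the identities $Q_{0,0}^{(1,0,0)}=P_{0,0}$ and $Q_{n,n}^{(1,0,0)}=P_{n,n}$ are immediate from the definition. For $0 \le k \le n-1$ I would substitute $Q_{n,k}^{(1,0,0)} = x\, P_{n-1,k}^{(1,0,0)}$ into the multiplication-by-$x$ lowering identity from \cref{Corollary:Lowering} with $(a,b,c)=(1,0,0)$ and shifted indices $(n-1,k)$; the two coefficients $(n-k+a)$ and $(n-k+1)$ both collapse to $(n-k)$, giving the stated two-term combination of $P_{n,k}$ and $P_{n-1,k}$.

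The $(0,1,0)$ family is analogous in the bulk. For $1 \le k \le n$ I substitute $Q_{n,k}^{(0,1,0)}=y\,P_{n-1,k-1}^{(0,1,0)}$ into the multiplication-by-$y$ lowering identity at parameters $(0,1,0)$ with shifted indices $(n-1,k-1)$. A common factor of $k$ cancels from both sides, leaving the four-term combination of $P_{n-1,k-1},P_{n-1,k},P_{n,k-1},P_{n,k}$ whose coefficients and signs coincide with those in the statement. The $k=0$ row is special because $Q_{n,0}^{(0,1,0)}(x,y)=\tilde P_n^{(0,0)}(x)$ lies outside the $P^{(a,b,c)}$ hierarchy (the parameters would be $(0,-1,-1)$); however, since $P_{n,0}(x,y)=\tilde P_n^{(1,0)}(x)$, the target equation reduces to the purely univariate identity $(2n+1)\tilde P_n^{(0,0)}(x) = (n+1)\tilde P_n^{(1,0)}(x) - n\,\tilde P_{n-1}^{(1,0)}(x)$, which is the classical Jacobi-parameter conversion (DLMF 18.9 applied to the shifted interval), derivable from the three-term recurrence for $\tilde P_n^{(1,0)}$.

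The $(0,0,1)$ family proceeds identically to $(0,1,0)$ with $z=1-x-y$ in place of $y$ and the multiplication-by-$z$ lowering operator; a common factor of $k$ again cancels in the bulk formula, the sign pattern is the one shown (flipped on the $P_{n,k}$ and $P_{n-1,k}$ terms because of $z = 1 - x - y$ versus $y$), and the $k=0$ row appeals to the same univariate Legendre/Jacobi identity as above. The main obstacle is therefore bookkeeping rather than any conceptual step: one must carefully propagate the substitutions $(n,k,a,b,c)\mapsto(n-1,k-1,0,1,0)$ (and analogously for $(0,0,1)$) through the lowering operator formulas and check that the cancellations and signs exactly match the claimed coefficients.
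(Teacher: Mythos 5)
Your proof follows exactly the route the paper takes: the paper's entire justification of this corollary is the one-line remark that the identities are either immediate from the definitions or obtained by rearranging the lowering relations of \cref{Corollary:Lowering}, and your case-by-case substitution into the multiplication-by-$x$, -$y$, and -$z$ identities (together with the classical univariate Jacobi contiguous relation for the $k=0$ rows of the $(0,1,0)$ and $(0,0,1)$ families, which indeed lie outside the weighted hierarchy) is precisely that rearrangement spelled out. One bookkeeping caveat: carrying out your $(0,1,0)$ and $(0,0,1)$ bulk substitutions literally leaves a prefactor $2(2n+1)$ on the left after the common factor $k$ cancels (check $n=k=1$: $Q_{1,1}^{(0,1,0)}=y$ while the stated right-hand side evaluates to $6y$), so the coefficients do not quite ``coincide with those in the statement'' as you claim --- the discrepancy is a factor-of-two slip in the corollary as printed rather than a flaw in your method.
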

\begin{proof}
These are either immediate from definitions or are obtained by rearranging recurrence relationships found in \cref{Corollary:Lowering}. 
\end{proof}

The two-edge Dirichlet basis satisfy several sparse recurrence relationships. 
\begin{corollary}\label{Corollary:TwoEdgeConversion}
The following recurrence relationships hold: 
\meeq{
	Q_{0,0}^{(1,1,0)}(x,y) = Q_{0,0}^{(1,0,0)}(x,y), \ccr
	2n Q_{n,0}^{(1,1,0)}(x,y) = (n+1)Q_{n,0}^{(1,0,0)}(x,y)- n Q_{n-1,0}^{(1,0,0)}(x,y), \ccr	
	4n Q_{n,k}^{(1,1,0)}(x,y) = (n+k+1)Q_{n,k}^{(1,0,0)}(x,y)-(n-k)Q_{n,k-1}^{(1,0,0)}(x,y)\\
	&+(k-n)Q_{n-1,k}^{(1,0,0)}(x,y)+(n+k-1)Q_{n-1,k-1}^{(1,0,0)}(x,y),\ccr
	2Q_{n,n}^{(1,1,0)}(x,y) = Q_{n,n}^{(1,0,0)}(x,y)-Q_{n,n-1}^{(1,0,0)}(x,y)+Q_{n-1,n-1}^{(1,0,0)}(x,y). 
}	
\meeq{
	2 Q_{n,0}^{(1,1,0)}(x,y) = Q_{n,0}^{(0,1,0)}(x,y)+ Q_{n-1,0}^{(0,1,0)}(x,y), \ccr	
	2n Q_{n,k}^{(1,1,0)}(x,y) = (n-k)\br[Q_{n,k}^{(0,1,0)}(x,y)+Q_{n-1,k}^{(0,1,0)}(x,y)],\ccr
	Q_{n,n}^{(1,1,0)}(x,y) = Q_{n,n}^{(0,1,0)}(x,y). 
	}
\meeq{
	Q_{0,0}^{(1,0,1)}(x,y) = Q_{0,0}^{(1,0,0)}(x,y), \ccr
	2n Q_{n,0}^{(1,0,1)}(x,y) = (n+1)Q_{n,0}^{(1,0,0)}(x,y)- n Q_{n-1,0}^{(1,0,0)}(x,y), \ccr	
	4n Q_{n,k}^{(1,0,1)}(x,y) = -(n+k+1)Q_{n,k}^{(1,0,0)}(x,y)-(n-k)Q_{n,k-1}^{(1,0,0)}(x,y)\\
	&+(n-k)Q_{n-1,k}^{(1,0,0)}(x,y)+(n+k-1)Q_{n-1,k-1}^{(1,0,0)}(x,y),\ccr
	2Q_{n,n}^{(1,0,1)}(x,y) = -Q_{n,n}^{(1,0,0)}(x,y)-Q_{n,n-1}^{(1,0,0)}(x,y)+Q_{n-1,n-1}^{(1,0,0)}(x,y). \cr
}
\meeq{
	2 Q_{n,0}^{(1,0,1)}(x,y) = Q_{n,0}^{(0,0,1)}(x,y)+ Q_{n-1,0}^{(0,0,1)}(x,y), \ccr	
	2n Q_{n,k}^{(1,0,1)}(x,y) = (n-k)\br[Q_{n,k}^{(0,0,1)}(x,y)+Q_{n-1,k}^{(0,0,1)}(x,y)],\ccr
	Q_{n,n}^{(1,0,1)}(x,y) = Q_{n,n}^{(0,0,1)}(x,y). \cr
}
\meeq{
	2 Q_{n,0}^{(0,1,1)}(x,y) = -Q_{n,0}^{(0,1,0)}(x,y)+ Q_{n-1,0}^{(0,1,0)}(x,y), \ccr	
	2n Q_{n,1}^{(0,1,1)}(x,y) = -2(n+1)Q_{n,1}^{(0,1,0)}(x,y)-n Q_{n,0}^{(0,1,0)}(x,y)\\
	&+ 2(n-1)Q_{n-1,1}^{(0,1,0)}(x,y) +n Q_{n-1,0}^{(0,1,0)}(x,y),\ccr	
	2n(2k-1) Q_{n,k}^{(0,1,1)}(x,y) = \\
	-(k-1)(n+k)&Q_{n,k}^{(0,1,0)}(x,y)-(k-1)(n-k+1)Q_{n,k-1}^{(0,1,0)}(x,y)\\
	+(k-1)(n-k)&Q_{n-1,k}^{(0,1,0)}(x,y)+(k-1)(n+k-1)Q_{n-1,k-1}^{(0,1,0)}(x,y). \cr
}
\meeq{
	2 Q_{n,0}^{(0,1,1)}(x,y) = -Q_{n,0}^{(0,0,1)}(x,y)+ Q_{n-1,0}^{(0,0,1)}(x,y), \ccr	
	2n Q_{n,1}^{(0,1,1)}(x,y) = 2(n+1)Q_{n,1}^{(0,1,0)}(x,y)+n Q_{n,0}^{(0,0,1)}(x,y)\\
	&-2(n-1)Q_{n-1,1}^{(0,1,0)}(x,y) -n Q_{n-1,0}^{(0,0,1)}(x,y),\ccr	
	2n(2k-1) Q_{n,k}^{(0,1,1)}(x,y) = \\
	(k-1)(n+k)Q_{n,k}^{(0,0,1)}(x,y)&-(k-1)(n-k+1)Q_{n,k-1}^{(0,0,1)}(x,y)\\
	-(k-1)(n-k)Q_{n-1,k}^{(0,0,1)}(x,y)&+(k-1)(n+k-1)Q_{n-1,k-1}^{(0,0,1)}(x,y). 	
}
\end{corollary}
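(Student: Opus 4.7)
The plan is to derive each of the listed identities directly from the definitions in \cref{Appendix:DirichletBasis} together with the lowering-operator recurrences in \cref{Corollary:Lowering} and, at boundary indices, the one-edge conversions in \cref{Corollary:OneEdgeConversion} and elementary univariate Jacobi identities.

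For the ``interior'' indices (where a two-edge basis polynomial is defined as a product $xyP^{(1,1,0)}_{n-2,k-1}$, $xzP^{(1,0,1)}_{n-2,k-1}$, or $yzP^{(0,1,1)}_{n-2,k-2}$), the strategy is: to convert $Q^{(1,1,0)}$ into the $Q^{(1,0,0)}$ basis, apply the $y$-lowering identity of \cref{Corollary:Lowering} to $yP^{(1,1,0)}_{n-2,k-1}$ so that the shared factor of $(k+1)$ on both sides cancels, then multiply the resulting four-term combination of $P^{(1,0,0)}_{m,\ell}$ by $x$ and repackage each $xP^{(1,0,0)}_{m,\ell}$ as $Q^{(1,0,0)}_{m+1,\ell}$ via the basis definition. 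The four coefficients produced by the lowering identity collapse to exactly $(n{+}k{+}1)$, $-(n{-}k)$, $-(n{-}k)$, and $(n{+}k{-}1)$ divided by $4n$, matching the claim. To convert into the $Q^{(0,1,0)}$ basis one instead lowers in $x$ (which has the simpler two-term form in \cref{Corollary:Lowering}), multiplies by $y$, and identifies $yP^{(0,1,0)}_{m-1,\ell-1}$ with $Q^{(0,1,0)}_{m,\ell}$. The analogous derivations with $y\leftrightarrow z$ cover the $Q^{(1,0,1)}$ recurrences, and the symmetric ones with $x\leftrightarrow z$ cover the $Q^{(0,1,1)}$ case for $k\geq 2$.

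For the boundary indices ($k=0$, $k=n$, and the $k=0,1$ cases in $Q^{(0,1,1)}$), the basis polynomials are defined either in terms of univariate Jacobi polynomials or as an honest two-variable polynomial on the top diagonal, so I would treat each case by hand. For instance $Q^{(1,1,0)}_{n,0} = x\tilde P^{(0,1)}_{n-1}(x)$ while $Q^{(1,0,0)}_{n,0} = x\tilde P^{(1,1)}_{n-1}(x)$, so after dividing by $x$ the desired identity reduces to a one-dimensional parameter-shift identity between shifted Jacobi polynomials on $[0,1]$, which follows from \cite[18.9.3--18.9.5]{DLMF}. The corner case $Q^{(1,1,0)}_{n,n} = Q^{(0,1,0)}_{n,n}$ is immediate from the definitions, and its counterpart in the $Q^{(1,0,0)}$ basis follows by combining this with the $k=n$ specialisation of \cref{Corollary:OneEdgeConversion} to rewrite $Q^{(0,1,0)}_{n,n}$ as a combination of $P_{n,k}$'s, then regrouping those $P_{n,k}$ back into $Q^{(1,0,0)}$ using the one-edge conversion in the opposite direction. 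The $Q^{(0,1,1)}$ boundary rows involve the linear factor $(1-x-2y)$, which I would handle by writing $(1-x-2y) = z - y$ and then applying the $y$- and $z$-lowering identities to the accompanying univariate piece $\tilde P^{(1,0)}_{n-1}(x) = P_{n-1,0}$.

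The main obstacle is bookkeeping rather than depth. Each identity has its own pattern of index shifts; the normalising constants $2n$, $4n$, $2n(2k{-}1)$ only appear after common factors cancel; and the lowering formulas generate terms like $P^{(a,b,c)}_{n,k+1}$ that must be interpreted as zero when $k=n$, so one has to check that spurious contributions vanish exactly at the correct boundary $k$-values. Multiple derivation paths exist (e.g.\ one can reach $Q^{(1,0,0)}$ from $Q^{(1,1,0)}$ by lowering $y$ first or by going via $Q^{(0,1,0)}$), and consistency across paths provides a useful correctness check. Once all index edge cases are organised, every listed recurrence reduces to a single application of a lowering or conversion identity from \cref{Appendix:PRecurrences} together with the basis definition.
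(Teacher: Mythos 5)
Your proposal takes essentially the same route as the paper, whose entire proof is the single sentence that the identities are ``either immediate from definitions or are obtained by rearranging recurrence relationships found in \cref{Corollary:Lowering}'': your plan of lowering in the appropriate variable, cancelling the common factor, multiplying back by the remaining weight ($x$, $y$, or $z$) and repackaging each weighted $P^{(a,b,c)}_{m,\ell}$ as a $Q$ via the basis definitions is precisely that rearrangement, and the coefficients it produces match the stated formulas. You also correctly identify the one genuine hazard the paper glosses over --- out-of-range terms such as $P_{n,k}$ with $k>n$ generated by the lowering recurrences at boundary indices, where the generic interior formula must be reconciled with the special-case definitions of $Q^{(1,0,0)}_{n,n}$ and its analogues --- so no gap remains beyond the bookkeeping you already acknowledge.
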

\begin{proof}
These are either immediate from definitions or are obtained by rearranging recurrence relationships found in \cref{Corollary:Lowering}. 
\end{proof}

The three-edge Dirichlet basis also satisfy several sparse recurrence relationships. 
\begin{corollary}\label{Corollary:ThreeEdgeConversion}
The following recurrence relationships hold: 
\meeq{
	Q_{0,0}^{(1,1,1)}(x,y) = Q_{0,0}^{(0,1,1)}(x,y),  \ccr
	Q_{1,0}^{(1,1,1)}(x,y) = 2Q_{1,0}^{(0,1,1)}(x,y) -Q_{0,0}^{(0,1,1)}(x,y), \ccr
	Q_{1,1}^{(1,1,1)}(x,y) = Q_{1,1}^{(0,1,1)}(x,y), \ccr
	(2n-1)Q_{n,0}^{(1,1,1)}(x,y) = (n-1)\br[Q_{n,0}^{(0,1,1)}(x,y) +Q_{n-1,0}^{(0,1,1)}(x,y)], \ccr
	(2n-1)Q_{n,k}^{(1,1,1)}(x,y) = (n-k)\br[Q_{n,k}^{(0,1,1)}(x,y) +Q_{n-1,k}^{(0,1,1)}(x,y)], \ccr
	Q_{n,n}^{(1,1,1)}(x,y) = Q_{n,n}^{(0,1,1)}(x,y). \cr\ccr
	Q_{0,0}^{(1,1,1)}(x,y) = Q_{0,0}^{(1,0,1)}(x,y),  \ccr
	Q_{1,0}^{(1,1,1)}(x,y) = -2Q_{1,0}^{(1,0,1)}(x,y) +Q_{0,0}^{(1,0,1)}(x,y), \ccr
	Q_{1,1}^{(1,1,1)}(x,y) = 2Q_{1,1}^{(1,0,1)}(x,y)+Q_{1,0}^{(1,0,1)}(x,y)-Q_{0,0}^{(1,0,1)}(x,y), \ccr
	(2n-1)Q_{n,0}^{(1,1,1)}(x,y) = (n-1)\br[-Q_{n,0}^{(1,0,1)}(x,y) +Q_{n-1,0}^{(1,0,1)}(x,y)], \ccr
	(2n-1)Q_{n,1}^{(1,1,1)}(x,y) = 2 (n+1)Q_{n,1}^{(1,0,1)}(x,y) +(n-1) Q_{n,0}^{(1,0,1)}(x,y) \\
	&-2(n-1) Q_{n-1,1}^{(1,0,1)}(x,y) -(n-1)Q_{n-1,0}^{(1,0,1)}(x,y)], \ccr
	(2n-1)(2k-1)Q_{n,k}^{(1,1,1)}(x,y) = \\
	(n+k)(k-1)&Q_{n,k}^{(1,0,1)}(x,y) -(n-k)(k-1)Q_{n,k-1}^{(1,0,1)}(x,y)\\
	-(n-k)(k-1) &Q_{n-1,k}^{(1,0,1)}(x,y) +(n+k-2)(k-1)Q_{n-1,k-1}^{(1,0,1)}(x,y), \ccr
	(2n-1)Q_{n,n}^{(1,1,1)}(x,y) =\\
	 (n-1)&\br[Q_{n,n}^{(1,0,1)}(x,y)-Q_{n,n-1}^{(1,0,1)}(x,y)+Q_{n-1,n-1}^{(1,0,1)}(x,y)]. \cr
	}
\meeq{
	Q_{0,0}^{(1,1,1)}(x,y) = Q_{0,0}^{(1,1,0)}(x,y),  \ccr
	Q_{1,0}^{(1,1,1)}(x,y) = -2Q_{1,0}^{(1,1,0)}(x,y) +Q_{0,0}^{(1,1,0)}(x,y), \ccr
	Q_{1,1}^{(1,1,1)}(x,y) = -2Q_{1,1}^{(1,1,0)}(x,y)-Q_{1,0}^{(1,1,0)}(x,y)+Q_{0,0}^{(1,1,0)}(x,y), \ccr
	(2n-1)Q_{n,0}^{(1,1,1)}(x,y) = (n-1)\br[-Q_{n,0}^{(1,1,0)}(x,y) +Q_{n-1,0}^{(1,1,0)}(x,y)], \ccr
	(2n-1)Q_{n,1}^{(1,1,1)}(x,y) = -2 (n+1)Q_{n,1}^{(1,1,0)}(x,y) -(n-1) Q_{n,0}^{(1,1,0)}(x,y) \\
	+2(n-1) &Q_{n-1,1}^{(1,1,0)}(x,y) +(n-1)Q_{n-1,0}^{(1,1,0)}(x,y)], \ccr
	(2n-1)(2k-1)Q_{n,k}^{(1,1,1)}(x,y) =\\
	 -(n+k)(k-1) &Q_{n,k}^{(1,1,0)}(x,y) -(n-k)(k-1)Q_{n,k-1}^{(1,1,0)}(x,y) \\
	 +(n-k)(k-1)&Q_{n-1,k}^{(1,1,0)}(x,y) +(n+k-2)(k-1)Q_{n-1,k-1}^{(1,1,0)}(x,y), \ccr
	(2n-1)Q_{n,n}^{(1,1,1)}(x,y) = \\
	 (n-1) &\br[-Q_{n,n}^{(1,1,0)}(x,y)-Q_{n,n-1}^{(1,1,0)}(x,y)+Q_{n-1,n-1}^{(1,1,0)}(x,y)]. \cr		
}
\end{corollary}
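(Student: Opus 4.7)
The plan is to derive every identity in \cref{Corollary:ThreeEdgeConversion} by combining the explicit definitions of $Q_{n,k}^{(1,1,1)}$ and the relevant two-edge basis with the lowering recurrences of \cref{Corollary:Lowering}. The strategy mirrors the proof sketch already used for \cref{Corollary:TwoEdgeConversion}: each identity is either immediate from the definitions or obtained by rearranging a lowering recurrence.

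First I would dispatch the low-index identities ($n=0$ and $n=1$) by direct evaluation. For instance, $Q_{0,0}^{(1,1,1)}=1=Q_{0,0}^{(0,1,1)}$ is immediate from the definitions, and using $Q_{1,0}^{(0,1,1)}(x,y)=1-x$ together with $Q_{0,0}^{(0,1,1)}=1$ one verifies $2(1-x)-1=1-2x=Q_{1,0}^{(1,1,1)}$; the remaining small-$n$ rows are similar elementary polynomial identities obtained from the explicit definitions in \cref{Appendix:DirichletBasis}.

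For the generic interior range $2\leq k\leq n-1$, the definition of the three-edge basis reads
\[
Q_{n,k}^{(1,1,1)}(x,y)=xyz\,P_{n-3,k-2}^{(1,1,1)}(x,y),\qquad Q_{n,k}^{(0,1,1)}(x,y)=yz\,P_{n-2,k-2}^{(0,1,1)}(x,y).
\]
Applying the $x$-lowering recurrence in \cref{Corollary:Lowering} with $(n,k,a,b,c)\mapsto(n-3,k-2,1,1,1)$ yields
\[
(2n-1)\,x\,P_{n-3,k-2}^{(1,1,1)}=(n-k)\bigl[P_{n-3,k-2}^{(0,1,1)}+P_{n-2,k-2}^{(0,1,1)}\bigr],
\]
and multiplying through by $yz$ gives the asserted $(2n-1)Q_{n,k}^{(1,1,1)}=(n-k)[Q_{n,k}^{(0,1,1)}+Q_{n-1,k}^{(0,1,1)}]$. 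The analogous interior identities for $Q^{(1,0,1)}$ and $Q^{(1,1,0)}$ follow from the $y$- and $z$-lowering recurrences in \cref{Corollary:Lowering}, which introduce the additional tridiagonal-in-$k$ structure that is visible in those rows of the statement; the extra factor $(k-1)$ that appears in the coefficients encodes the $(2k+b+c+1)$ normalisation carried by the $y$- and $z$-lowering formulas.

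Finally, the boundary cases $k\in\{0,1,n\}$ require separate bookkeeping because the three-edge basis is defined there by distinguished polynomials such as $x(1-x)\tilde P_{n-2}^{(1,1)}(x)$, $x(1-x-2y)\tilde P_{n-2}^{(1,1)}(x)$, and $yz\,P_{n-2,n-2}^{(0,1,1)}$. These cases reduce to the $k=0$ or $k=n$ specializations of \cref{Corollary:Lowering} together with the elementary rewrites $1-x-2y=z-y$ and $1-x=y+z$. The main obstacle is purely algebraic bookkeeping: one must keep careful track of the shifts $(n,k)\mapsto(n\pm 1,k\pm 1)$ when passing from $P_{n,k}^{(1,1,1)}$ to $P_{n\pm 1,k\pm 1}^{(0,1,1)}$, verify that the two coefficients produced by a lowering recurrence collapse to the claimed four-term combination, and ensure that the correct $(2n-1)(2k-1)$ normalisation emerges. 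Once this bookkeeping is organised by target basis, each individual identity becomes a one-line consequence of \cref{Corollary:Lowering}.
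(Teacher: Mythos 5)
Your proposal is correct and follows essentially the same route as the paper, whose proof of this corollary is precisely the one-line observation that each identity is either immediate from the definitions of the $Q$-bases or a rearrangement of the lowering recurrences in \cref{Corollary:Lowering}. Your worked instance (substituting $(n,k,a,b,c)\mapsto(n-3,k-2,1,1,1)$ into the $x$-lowering relation and multiplying by $yz$) and your accounting for the $(2k-1)$ factor via the $(2k+b+c+1)$ normalisation in the $y$- and $z$-lowering relations are exactly the computations the paper leaves implicit.
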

\begin{proof}
These are either immediate from definitions or are obtained by rearranging recurrence relationships found in \cref{Corollary:Lowering}. 
\end{proof}

\subsection{Recurrence relationships for the partial derivatives of the Dirichlet basis}
We now turn to recurrence relationships for the partial derivatives of the Dirichlet basis, which are needed when imposing Neumann boundary conditions. 

\begin{corollary}\label{Corollary:TwoEdgeDerivative}
The following recurrence relationships hold: 
\meeq{
	{\partial \over \partial y} Q_{n,0}^{(0,1,1)}(x,y) = 0, \ccr
	{\partial \over \partial y} Q_{n,1}^{(0,1,1)}(x,y) = -2 P_{n-1,0}(x,y), \ccr
	{\partial \over \partial y} Q_{n,k}^{(0,1,1)}(x,y) = (1-k) P_{n-1,k-1}(x,y), \ccr
	{\partial \over \partial x} Q_{n,0}^{(1,0,1)}(x,y) = n P_{n-1,0}(x,y), \ccr
	{\partial \over \partial x} Q_{n,k}^{(1,0,1)}(x,y) = {k-n \over 2} \br[{P_{n-1,k-1}(x,y) +P_{n-1,k}(x,y)}],\ccr
	{\partial \over \partial x} Q_{n,n}^{(1,0,1)}(x,y) = -n P_{n-1,n-1}(x,y), \ccr
	{\partial \over \partial z} Q_{n,0}^{(1,1,0)}(x,y) = -n P_{n-1,0}(x,y),   \ccr
	{\partial \over \partial z} Q_{n,k}^{(1,1,0)}(x,y) = {n-k \over 2} \br[{P_{n-1,k-1}(x,y) - P_{n-1,k}(x,y)}],\ccr
	{\partial \over \partial z} Q_{n,n}^{(1,1,0)}(x,y) = n P_{n-1,n-1}(x,y).	
	}	
\end{corollary}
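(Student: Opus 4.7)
The plan is to verify each of the nine identities in turn by substituting the explicit definitions from \cref{Appendix:DirichletBasis}, and then applying either the weighted differentiation formulas from \cref{Corollary:WeightedDifferentiation} or the standard differentiation formulas from \cref{Corollary:Differentiation}, together with the Koornwinder representation \cref{eq:Koornwinder}.

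The easiest cases are the ``generic interior'' indices. For $k \geq 2$ in the $\vc Q^{(0,1,1)}$ block we have $Q_{n,k}^{(0,1,1)} = y z P_{n-2,k-2}^{(0,1,1)}$, so applying \cref{Corollary:WeightedDifferentiation} with $(a,b,c) = (0,1,1)$ and index shift $n \to n-2$, $k \to k-2$ immediately gives $\partial_y Q_{n,k}^{(0,1,1)} = -(k-1) P_{n-1,k-1}$, which is the stated identity. For $1 \leq k \leq n-1$ in the $\vc Q^{(1,0,1)}$ block we have $Q_{n,k}^{(1,0,1)} = xz P_{n-2,k-1}^{(1,0,1)}$, and \cref{Corollary:WeightedDifferentiation} with $(a,b,c)=(1,0,1)$ gives $\partial_x Q_{n,k}^{(1,0,1)} = \tfrac{k-n}{2}[P_{n-1,k-1} + P_{n-1,k}]$ after cancellation of the common factor $(k+1)$. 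The analogous $\partial_z$ formulas for the interior $\vc Q^{(1,1,0)}$ entries follow from the same calculation applied with $(a,b,c)=(1,1,0)$, using $\partial_z = \partial_y - \partial_x$ and noting that the differences between the two lower-bidiagonal terms from the $x$- and $y$-weighted derivative formulas combine to give the stated lower-bidiagonal minus-pattern.

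The boundary indices $k = 0, 1, n$ require separate direct arguments. The trivial ones are immediate: $Q_{n,0}^{(0,1,1)} = (1-x)\tilde P_{n-1}^{(1,0)}(x)$ depends only on $x$, so its $y$-derivative vanishes; and $\partial_y Q_{n,1}^{(0,1,1)} = \partial_y[(1-x-2y)\tilde P_{n-1}^{(1,0)}(x)] = -2\tilde P_{n-1}^{(1,0)}(x) = -2P_{n-1,0}$ by the $k=0$ case of \cref{eq:Koornwinder}. For $\partial_x Q_{n,0}^{(1,0,1)} = \tfrac{d}{dx}[x\tilde P_{n-1}^{(0,1)}(x)]$ and $\partial_z Q_{n,0}^{(1,1,0)} = -\tfrac{d}{dx}[x\tilde P_{n-1}^{(0,1)}(x)]$, I would invoke the classical univariate shifted Jacobi weighted derivative identity $\tfrac{d}{dx}[x\tilde P_{n-1}^{(0,1)}(x)] = n\tilde P_{n-1}^{(1,0)}(x)$ (equivalent to DLMF 18.9.16 after the shift $x \mapsto 2x-1$); this matches $\pm n P_{n-1,0}$ since $P_{n-1,0} = \tilde P_{n-1}^{(1,0)}$.

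The remaining cases $k=n$ for $\vc Q^{(1,0,1)}$ and $\vc Q^{(1,1,0)}$ are the main obstacle, because the associated triangle weighted-differentiation identity from \cref{Corollary:WeightedDifferentiation} cannot be invoked directly without producing the invalid parameter $a=-1$. Instead, I would compute directly from $Q_{n,n}^{(1,0,1)} = z P_{n-1,n-1}^{(0,0,1)} = (1-x)^n (1-u)\tilde P_{n-1}^{(1,0)}(u)$, where $u = y/(1-x)$. Differentiating in $x$ via the chain rule (using $\partial_x u = u/(1-x)$) and collecting yields
\[
\partial_x Q_{n,n}^{(1,0,1)} = (1-x)^{n-1}\bigl[-n(1-u)\tilde P_{n-1}^{(1,0)}(u) - u\tilde P_{n-1}^{(1,0)}(u) + u(1-u)(\tilde P_{n-1}^{(1,0)})'(u)\bigr].
\]
The bracketed expression equals $-n\tilde P_{n-1}^{(0,0)}(u)$ by the Jacobi differential equation for $\tilde P_{n-1}^{(1,0)}$, giving $-n P_{n-1,n-1}$ as required. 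The corresponding $\partial_z Q_{n,n}^{(1,1,0)}$ identity follows from the symmetric computation with the roles of $y$ and $z$ interchanged, or equivalently by applying the barycentric symmetry $y \leftrightarrow z$ of the triangle under which $\vc Q^{(1,1,0)}$ and $\vc Q^{(1,0,1)}$ are swapped.
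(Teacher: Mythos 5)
Your overall strategy coincides with the paper's: the interior indices are dispatched by \cref{Corollary:WeightedDifferentiation} after the appropriate index shift, the $k=0$ entries reduce to the univariate identity $\tfrac{\D}{\D x}[x\tilde P_{n-1}^{(0,1)}(x)] = n\tilde P_{n-1}^{(1,0)}(x)$, and the $k=n$ entries reduce, after substituting $u=y/(1-x)$, to a univariate identity for $\tilde P_{n-1}^{(1,0)}$ --- exactly the reduction the paper performs with $t=y/(1-x)$.

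The one step that does not hold up as written is your justification of that final univariate identity. You assert that the bracketed expression, i.e.
\[
u(1-u)\bigl(\tilde P_{n-1}^{(1,0)}\bigr)'(u) - \bigl(n(1-u)+u\bigr)\tilde P_{n-1}^{(1,0)}(u) = -n\,\tilde P_{n-1}^{(0,0)}(u),
\]
follows from ``the Jacobi differential equation for $\tilde P_{n-1}^{(1,0)}$.'' It does not: the Jacobi ODE is a second-order, homogeneous relation that stays within the single parameter family $\{\tilde P_m^{(1,0)}\}$, whereas what you need is a first-order \emph{parameter-lowering} (contiguous) relation connecting $\tilde P_{n-1}^{(1,0)}$ to $\tilde P_{n-1}^{(0,0)}$ at the same degree. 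The identity is true --- it is precisely the ladder operator $\mathcal{L}_2^\dagger$ of \cite[Lem.~1]{Olver_18_01} that the paper invokes, and it can alternatively be assembled from the standard contiguous/derivative relations in \cite{DLMF} --- but it must be cited or proved as such; the ODE alone cannot deliver it. Everything else (the chain-rule computation for $Q_{n,n}^{(1,0,1)}$, the index bookkeeping in the weighted-differentiation formulas, the trivial $k=0,1$ cases, and the $y\leftrightarrow z$ reflection for $Q_{n,n}^{(1,1,0)}$) is correct and matches the paper's argument.
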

\begin{proof}
The first three relations follow from the weighted partial differentiation relationships (see \cref{Corollary:WeightedDifferentiation}). The fourth relation requires the additional property that
$$
	{\D \over \dx} \br[x \tilde P_{n-1}^{(0,1)}(x)] = n \tilde P_{n-1}^{(1,0)}(x),
$$
which follows from~\cite[15.5.6]{DLMF}. The fifth relationship also follows from \cref{Corollary:WeightedDifferentiation}. For the sixth equation, if we define $t = y/(1-x)$, then the relation reduces to
$$
(1-x)^{n-1} \left[ ((n-1)(1-t) + 1) \tilde P_{n-1}^{(1,0)}(t) - t(1-t) {\D \over \D t} \tilde P_{n-1}^{(1,0)}(t)  \right]= n (1-x)^{n-1} \tilde P_{n-1}(t),
$$
and this expression follows from ${\cal L}_2^\dagger$ in~\cite[Lem.~1]{Olver_18_01}. The last three relations follow from the same manipulation.
\end{proof}

\end{document}